\DeclarePairedDelimiter{\abs}{\lvert}{\rvert}
\DeclarePairedDelimiter{\norm}{\lVert}{\rVert}
\DeclarePairedDelimiter{\parens}{(}{)}
\DeclarePairedDelimiter{\floor}{\lfloor}{\rfloor}
\newcommand{\ov}{\overline}
\newcommand{\up}[1]{^{(#1)}}
\newcommand{\ord}{\operatorname{ord}}
\newcommand{\ch}{\operatorname{char}}
\newcommand{\codim}{\operatorname{codim}}
\newtheorem{letterthm}{Theorem}
\newtheorem{lettercor}[letterthm]{Corollary}
\newtheorem{thm}{Theorem}[section]
\newtheorem*{thm*}{Theorem}
\newtheorem{prop}[thm]{Proposition}
\newtheorem{lemma}[thm]{Lemma}
\newtheorem{cor}[thm]{Corollary}
\newtheorem{condition}{Condition}[section]
\newtheorem*{condition*}{Condition}
\theoremstyle{remark}
\newtheorem{remark}{Remark}[subsection]
\newcommand{\ep}{\varepsilon}
\newcommand{\con}{\equiv}
\newcommand{\ndiv}{\nmid}
\newcommand{\modd}[1]{\; ( \mathrm{mod} \; #1)}
\newcommand{\bstack}[2]{#1 \atop #2}
\newcommand{\maps}{\rightarrow}
\newcommand{\intersect}{\cap}
\newcommand{\union}{\cup}
\newcommand{\Gal}{{\rm Gal }}
\newcommand{\al}{\alpha}
\newcommand{\be}{\beta}
\newcommand{\del}{\delta}
\newcommand{\Del}{\Delta}
\newcommand{\om}{\omega}
\newcommand{\sig}{\sigma}
\newcommand{\Acal}{\mathcal{A}}
\newcommand{\Pcal}{\mathcal{P}}
\newcommand{\Dcal}{\mathcal{D}}
\newcommand{\Mcal}{\mathcal{M}}
\newcommand{\Scal}{\mathcal{S}}
\newcommand{\Lbf}{\mathbf{L}}
\newcommand{\Hbf}{\mathbf{H}}
\newcommand{\Mbf}{\mathbf{M}}
\newcommand{\Nbf}{\mathbf{N}}
\newcommand{\Kbf}{\mathbf{K}}
\newcommand{\Vbf}{\mathbf{V}}
\newcommand{\Xbf}{\mathbf{X}}
\newcommand{\delbf}{\boldsymbol\delta}
\newcommand{\onebf}{\boldsymbol1}
\newcommand{\zerobf}{\boldsymbol0}
\newcommand{\abf}{{\bf a}}
\newcommand{\kbf}{\mathbf{k}}
\newcommand{\mbf}{{\bf m}}
\newcommand{\tbf}{{\bf t}}
\newcommand{\ubf}{{\bf u}}
\newcommand{\vbf}{{\bf v}}
\newcommand{\xbf}{{\bf x}}
\newcommand{\Qbar}{\overline{\Q}}
\newcommand{\Abb}{\mathbb{A}}
\newcommand{\C}{\mathbb{C}}
\newcommand{\F}{\mathbb{F}}
\newcommand{\N}{\mathbb{N}}
\newcommand{\Q}{\mathbb{Q}}
\newcommand{\R}{\mathbb{R}}
\newcommand{\Z}{\mathbb{Z}}
\newcommand{\Tr}{\text{Tr}}
\newcommand{\beq}{\begin{equation}}
\newcommand{\eeq}{\end{equation}}
\numberwithin{equation}{section}
\begin{document}

\title{Burgess bounds for short character sums evaluated at forms }

\author[Pierce]{Lillian B. Pierce}
\address{Department of Mathematics, Duke University, 120 Science Drive, Durham NC 27708 USA}
\email{pierce@math.duke.edu}

\author[Xu]{Junyan Xu}
\address{Department of Mathematics, Indiana University, 831 East 3rd Street, Bloomington, IN 47405}
\email{xu56@indiana.edu}

\maketitle  

\begin{abstract}
In this work we establish a Burgess bound for  short multiplicative character sums in arbitrary dimensions, in which the character is evaluated at  a homogeneous form that belongs to a very general class of ``admissible'' forms.  This $n$-dimensional Burgess bound  is nontrivial for sums over boxes of sidelength at least $q^{\be}$, with $\be  > 1/2 - 1/(2(n+1))$. This is the first  Burgess bound that applies in all dimensions to generic forms of arbitrary degree. Our approach capitalizes on a recent stratification result for complete multiplicative character sums evaluated at rational functions, due to the second author.
\end{abstract}

\section{Introduction}\label{sec_forms}

The celebrated Burgess bound \cite{Bur57} proves that for $\chi$ a non-principal multiplicative Dirichlet character   modulo a prime $q$, the   character sum
\[ S(N,H) = \sum_{N < n \leq N+H} \chi(n)\]
is bounded for every integer $r \geq 1$ by
\beq\label{Burgess_1}
  |S( N,H)| \ll_r  H^{1 - 1/r} q^{ \frac{r + 1}{4r^2}} \log q.
  \eeq
 From this it can be deduced that $S(N,H)$ 
admits a nontrivial bound $o(H)$ for $H$ as small as
$H=q^{1/4+\kappa},
$
 for any $\kappa>0$. Bounds for $S(N,H)$ have many applications, and as we survey in \S \ref{sec_overview}, Burgess's influential work set records that remain the best known today.

 This paper proves the first $n$-dimensional Burgess bound for short multiplicative character sums  evaluated at generic homogeneous polynomial arguments of arbitrarily large degree. Let $\chi$ be a non-principal character modulo a prime $q$.
Let $n \geq 1$ be a fixed dimension, and $F \in \Z[x_1,\ldots,x_n]$  a form of degree $D$. For any $\Nbf =(N_1,\ldots, N_n)$, $\Hbf = (H_1,\ldots,H_n) \in \R^n$, define
\beq\label{Burgess_n}
 S(F;\Nbf,\Hbf) = \sum_{\bstack{\xbf \in \Z^n}{x_i \in (N_i,N_i+H_i]}} \chi(F(\xbf)).
 \eeq
 Given $\Hbf$, we will define $\|\Hbf\| = H_1\cdots H_n$, so that in particular, $\|\Hbf\|$ is a trivial bound for $|S(F;\Nbf,\Hbf)|$. 
  Previous to the work of this paper, when the lengths $H_i$ are short, that is $\ll q^{1/2+\ep},$ nontrivial bounds for $S(F;\Nbf,\Hbf)$ of the form $o(\|\Hbf\|)$   were known only in special cases, such as when $F$ is a product of $n$ linear forms that are linearly independent over $\F_q$, or when $n=2$ and $F$ is a binary quadratic form (see \S \ref{sec_lit} for details). In this paper, we prove nontrivial bounds for $S(F;\Nbf,\Hbf)$ in any dimension $n$ for a very general class of ``admissible'' forms $F$, as long as 
   \[ \| \Hbf \| H_{\min} \gg q^{n/2 + \kappa},\]
 for some $\kappa>0$,  where $H_{\min} = \min_i H_i$. 
In particular, this is satisfied when $\Hbf = (H,\ldots, H)$ with $H=q^{\be_n + \kappa}$ for any $\kappa>0$, where
    \beq\label{beta_dfn_00}
 \be_n = \frac{1}{2} - \frac{1}{2(n+1)}.
 \eeq

\subsection{Statement of the main theorem}

We now provide a formal statement of the condition that a form $F$ must satisfy in order to be ``admissible'' for our main result.
 We only need to rule out those forms $F$ for which a nontrivial bound for $S(F;\Nbf,\Hbf)$ would naively fail, such as when $F$ is a perfect $\Del$-th power and $\chi$ is order $\Del$, or when $F$ can be made to depend on fewer than $n$ variables.
 
  \begin{condition}[$(\Del,q)$-admissible]\label{cond} Let $q$ be a fixed prime and $\Del \geq 1$ a fixed integer. We will say that a form $F \in \Z[x_1,\ldots, x_n]$ is $(\Del,q)$-admissible if the following holds. Let $f$ denote the the reduction of $F$ modulo $q$, so that we may consider $f \in \F_q[x_1,\ldots, x_n]$. Factorize $f = g^\Del h$ where $g,h \in \F_q[x_1,\ldots, x_n]$ and $h$ is $\Del$-th power-free over $\F_q$. Then $h$ has the property that it cannot be made independent of (at least) one variable after a linear transformation, i.e.  there exists no linear change of variables $A \in \mathrm{GL}_n(\F_q)$  such that $h(\xbf A) \in \F_q[x_2,\ldots, x_n]$. 
      \end{condition}

See \S \ref{sec_prelim} for further details on this condition, and a precise definition of being $\Del$-th power-free. 
If $\Del\geq 2$ is a fixed integer, any form $F \in \Z[x_1,\ldots,x_n]$ that has the property that $F=G^\Del H$ with $G,H \in \Z[x_1,\ldots,x_n]$ where $H$ is $\Del$-th power-free and nondegenerate with respect to changes of variables in $\mathrm{GL}_n(\Z)$, is $(\Del,q)$-admissible for all but finitely many primes $q$  (see Lemma \ref{lemma_F_ae_q}).  For any $D$, the form $x_1^D + \cdots + x_n^D$ is an example of such a form.
Moreover, such forms are generic among all forms in $\Z[x_1,\ldots,x_n]$ of degree at most $D$, since those that violate the conditions depend on fewer parameters.

Our main result is as follows:
\begin{thm}\label{thm_main_mult}
 Let $\chi$ be a non-principal multiplicative Dirichlet character of order $\Delta$ modulo a prime $q$. Let $n \geq 2$ be fixed. For each $r \geq 1$, define 
 \[ \Theta = \Theta_{n,r} =   \floor*{\frac{r-1}{n-1}} .\]
 Let $\Hbf=(H_1,\dots,H_n)\in\R_{\geq 1}^n$ have maximum element $H_{\max}$ and minimum element $H_{\min}$. Then as long as $H_{\max}H_{\min}<q^{1+1/(2\Theta)}$, we have for all degree $D$ forms $F\in\Z[x_1,\dots,x_n]$ that are $(\Del,q)$-admissible, that uniformly in $\Nbf = (N_1,\ldots, N_n)$, for every integer $r \geq 1$,
\beq\label{thm_bound_H}
\abs{S(F;{\bf N,H})}\ll \norm{\Hbf}^{1-\frac{1}{2r}}H_{\min}^{-\frac{1}{2r}}q^{\frac{n\Theta+1}{4r\Theta}}(\log q)^{n+1}, 
\eeq
 in which the implied constant depends only on $D, \Del, n, r$ and is otherwise independent of $F$.
\end{thm}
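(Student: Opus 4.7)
My plan is to adapt the classical Burgess shift-and-H\"older method to the $n$-dimensional, form-valued setting, and then to import Xu's stratification bound for complete character sums at rational functions to close out the moment step. The overall structure mirrors the one-dimensional Burgess proof, with the homogeneity of $F$ combined with the multiplicativity of $\chi$ giving the identity
\[ \chi\bigl(F(\mathbf{x}+z\mathbf{y})\bigr) \;=\; \chi(z)^{D}\,\chi\bigl(F(\bar z\mathbf{x}+\mathbf{y})\bigr) \pmod{q}, \]
(where $\bar z$ is the inverse of $z$ modulo $q$) playing the role of the usual $\chi(n+yp)=\chi(p)\chi(n\bar p+y)$.

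\textbf{Burgess shift.} Fix parameters $Y, Z \leq q^{1/2}$, to be optimized later. For any prime $z$ in a dyadic window near $Z$ and any $\mathbf{y}\in[1,Y]^n$, shifting the box of summation by $z\mathbf{y}$ changes $S(F;\mathbf{N},\mathbf{H})$ by at most a boundary term of size $\ll nYZ\|\mathbf{H}\|/H_{\min}$. Averaging over $\mathbf{y}$ and $z$, applying the identity above, and substituting $\mathbf{w}=\bar z\mathbf{x}+\mathbf{y}\pmod{q}$ yields
\[ |S(F;\mathbf{N},\mathbf{H})| \;\ll\; \frac{1}{Y^{n}Z\cdot\#\mathcal{P}}\sum_{\mathbf{w}\in\F_q^n}\nu(\mathbf{w})\,\Bigl|\sum_{z\in\mathcal{P},\,\mathbf{y}\in[1,Y]^n}\chi(z)^D\chi(F(\mathbf{w}))\Bigr| + \text{error}, \]
where $\mathcal{P}$ is the set of primes in the window and $\nu(\mathbf{w})$ is a nonnegative weight counting representations $\mathbf{w}\equiv\bar z\mathbf{x}+\mathbf{y}\pmod{q}$. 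One checks $\sum_{\mathbf{w}}\nu(\mathbf{w}) \asymp \|\mathbf{H}\|\cdot Y^n\cdot\#\mathcal{P}$, while $\sup_{\mathbf{w}}\nu(\mathbf{w})$ is controlled by a divisor-type bound exploiting the primality of $z$.

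\textbf{Moment bound via stratification.} Applying H\"older's inequality with exponent $2r$ to the $\mathbf{w}$-sum separates the weight $\nu$ from the inner character sum, reducing matters to the $2r$-th moment
\[ M_{2r} \;=\; \sum_{\mathbf{w}\in\F_q^n}\Bigl|\sum_{\mathbf{y}\in[1,Y]^n}\chi\bigl(F(\mathbf{w}+\mathbf{y})\bigr)\Bigr|^{2r}. \]
Expanding produces an outer sum over $2r$-tuples $\underline{\mathbf{y}}=(\mathbf{y}_1,\ldots,\mathbf{y}_{2r})$ and an inner complete sum evaluated at the rational function $R_{\underline{\mathbf{y}}}(\mathbf{w})=\prod_{i=1}^{r} F(\mathbf{w}+\mathbf{y}_i)\big/\prod_{j=r+1}^{2r}F(\mathbf{w}+\mathbf{y}_j)$. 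Xu's stratification theorem bounds each complete sum by a power of $q$ governed by the codimension of the singular stratum to which $R_{\underline{\mathbf{y}}}$ belongs: the top (nondegenerate) stratum yields the Weil-type bound $\ll q^{n/2}$, while tuples dropping to a stratum of codimension $c$ yield the weaker $\ll q^{n-c/2}$, but are themselves constrained to a subvariety of $(\F_q^n)^{2r}$ of commensurate codimension. The admissibility hypothesis is exactly what guarantees that for generic $\underline{\mathbf{y}}$, the reduced form $h$ (the $\Delta$-th power-free part of $F$ modulo $q$) keeps $R_{\underline{\mathbf{y}}}$ from being a $\Delta$-th power in $\F_q(\mathbf{w})$, so the top stratum really is the generic locus. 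The bookkeeping trade-off between stratum codimension and complete-sum savings is what produces the integer $\Theta=\lfloor(r-1)/(n-1)\rfloor$, and ultimately the exponent $(n\Theta+1)/(4r\Theta)$.

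\textbf{Optimization and main obstacle.} Feeding the moment bound back through H\"older and balancing against the shift error $YZ\|\mathbf{H}\|/H_{\min}$ yields \eqref{thm_bound_H} after the optimal choice of $Y$ and $Z$; the hypothesis $H_{\max}H_{\min}<q^{1+1/(2\Theta)}$ is precisely what keeps the shift error below the main term. The main obstacle I anticipate is the stratification step: one must (i) partition the $2r$-tuples $\underline{\mathbf{y}}$ according to the stratum containing $R_{\underline{\mathbf{y}}}$, (ii) bound the number of tuples in each stratum by a clean dimension count, and (iii) translate the admissibility hypothesis on $F$ into the statement that $R_{\underline{\mathbf{y}}}$ falls outside the degenerate locus except for a positive-codimension family of $\underline{\mathbf{y}}$. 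Making (ii) and (iii) tight enough to deliver exactly the $\Theta$-dependence in the exponent, rather than something weaker, is the delicate part of the argument.
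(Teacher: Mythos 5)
Your overall architecture --- a Burgess shift to manufacture a complete sum in a new variable, H\"older to pass to a $2r$-th moment, and Xu's stratification to bound that moment --- is the right one, and your reading of how the stratification enters (admissibility forcing the generic tuple into the top stratum; the codimension-versus-savings trade-off producing $\Theta=\lfloor(r-1)/(n-1)\rfloor$) matches the paper. But your shift step is genuinely different from the paper's, and the difference matters. The paper does not shift additively by $z\ybf$: it writes $\xbf=\abf q+p\mbf$ exactly, so that $\chi(F(\xbf))=\chi(p^D)\chi(F(\mbf))$ with $\mbf$ ranging over a box of side lengths $H_i/p$ whose starting point depends on $(\abf,p)$. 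That decomposition is lossless (no boundary error), but its price is that the short-short box has $(\abf,p)$-dependent position and unequal, $p$-dependent side lengths, which forces a maximal moment $\sum_\mbf\max_{\kbf\le 2\Hbf/P}|S(F;\mbf,\kbf)|^{2r}$ and then a Menchov--Rademacher argument. Since the stratification only saves powers of the \emph{smallest} side of a box, the dyadic pieces there must be re-sorted by permuting the variables (using that admissibility is permutation-invariant) and controlled by a rearrangement inequality; this is the paper's main technical work beyond citing the stratification, and your route never meets it because your inner box $[1,Y]^n$ is fixed with equal sides.

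The cost of your route is the additive boundary error $O(YZ\|\Hbf\|/H_{\min})$, and this is where the proposal falls short of the stated result. Running your optimization with $Y\asymp q^{1/(2\Theta)}$ (balancing the $j=0$ and $j=1$ strata) and then balancing the main term $\|\Hbf\|^{1-1/2r}Z^{-1/2r}q^{n/4r}$ against that error forces $Z$ strictly below $H_{\min}q^{-1/(2\Theta)}$; the exponent you obtain is the paper's with $2r$ replaced in effect by $2r+1$, i.e.\ a saving $\tfrac{2r}{2r+1}$ times the paper's. This reproduces the same nontrivial range and the same asymptotic $\delta_n$, but it does not literally prove (\ref{thm_bound_H}). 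Two smaller points: the hypothesis $H_{\max}H_{\min}<q^{1+1/(2\Theta)}$ is not there to control a shift error (the paper has none); it guarantees $H_iP<q$, which is what makes the weight's second moment ($\sum_\mbf\Acal(\mbf)^2\ll P\|\Hbf\|$, your $\sum_{\wbf}\nu(\wbf)^2$) provable uniformly in $\Nbf$ --- and that uniformity is not a routine ``divisor-type bound'' but needs an auxiliary-prime approximation of the $N_i$ as in Lemma \ref{lemma_A_D}. Also, your displayed identity after substituting $\wbf\equiv\bar z\xbf+\ybf$ is garbled (no $\ybf$-dependence survives inside); the substitution should be $\wbf\equiv\bar z\xbf$ with $\ybf$ retained as the short-short variable.
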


If one carries through our method of  proof in the case $n=1$, we may define $\Theta = \Theta_{1,r} =r$ for all $r \geq 1$ and recover the Burgess bound (\ref{Burgess_1}) (up to the power of the logarithm). This result may also be extended to apply to a rational function $F = f_1/f_2$ if it is appropriately regarded as $f_1 f_2^{\Del-1}$. Analogous to the proof of the P\'olya-Vinogradov inequality, Fourier-based methods  can prove a nontrivial bound for $S(F;\Nbf,\Hbf)$ for suitable forms $F$  when $H_i \gg q^{1/2+\ep}$ (see \S \ref{sec_Fourier}); thus the upper bound restriction on $H_i$ in the hypothesis of the theorem is compatible with our interest in the range $H_i \ll q^{1/2+\ep}$.

For purposes of comparison, we state a direct corollary of Theorem \ref{thm_main_mult} in the case that all coordinates of $\Hbf$ are of equal size.

\begin{cor}\label{thm_main_mult_cor}
Assume the hypotheses of Theorem \ref{thm_main_mult}. Then for $\Hbf = (H,H,\ldots,H)$ with $H<q^{1/2+1/(4\Theta)}$, we have   for all degree $D$ forms $F\in\Z[x_1,\dots,x_n]$ that are $(\Del,q)$-admissible, that uniformly in $\Nbf = (N_1,\ldots, N_n)$, for every integer $r \geq 1$,
\beq\label{cor_bound_H}
\abs{S(F;{\bf N,H})}\ll H^{n- \frac{n+1}{2r}}q^{\frac{n\Theta+1}{4r\Theta}}(\log q)^{n+1}, 
\eeq
 in which the implied constant depends only on $D, \Del, n, r$  and is otherwise independent of $F$.
\end{cor}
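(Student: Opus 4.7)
The plan is to derive the corollary by direct specialization of Theorem \ref{thm_main_mult} to the case of a cubical box $\Hbf = (H, H, \ldots, H)$, so the proof reduces to substitution and a verification of hypotheses; no new ideas are needed.

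First I would check that the hypothesis of Theorem \ref{thm_main_mult}, namely $H_{\max} H_{\min} < q^{1 + 1/(2\Theta)}$, is equivalent under the specialization to the corollary's hypothesis. With $H_{\max} = H_{\min} = H$ this reads $H^2 < q^{1 + 1/(2\Theta)}$, i.e.\ $H < q^{1/2 + 1/(4\Theta)}$, which is exactly the stated assumption on $H$. Thus Theorem \ref{thm_main_mult} applies to any $(\Del,q)$-admissible form $F$ in this range.

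Next I would compute the right-hand side of (\ref{thm_bound_H}) in this specialized setting. Using $\|\Hbf\| = H^n$ and $H_{\min} = H$, we have
$$\|\Hbf\|^{1 - 1/(2r)} H_{\min}^{-1/(2r)} = H^{n(1 - 1/(2r))} \cdot H^{-1/(2r)} = H^{n - (n+1)/(2r)},$$
while the remaining factors $q^{(n\Theta+1)/(4r\Theta)} (\log q)^{n+1}$ are unchanged by the specialization. Substituting these identities into (\ref{thm_bound_H}) produces (\ref{cor_bound_H}) verbatim, with the same dependence of the implied constant on $D, \Del, n, r$.

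There is no substantive obstacle: the corollary is a cosmetic repackaging of the main theorem for boxes with equal sidelengths. The only point where one must be slightly careful is in tracking the constraint on $H$, since halving the exponent in $q^{1+1/(2\Theta)}$ to account for $H^2 = H_{\max} H_{\min}$ is what produces the apparent weakening from $1/(2\Theta)$ to $1/(4\Theta)$ in the corollary's hypothesis; the short verification above confirms that this is exactly what is required.
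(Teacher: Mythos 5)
Your proposal is correct and is exactly how the paper obtains this statement: the corollary is stated as a direct specialization of Theorem \ref{thm_main_mult} to $\Hbf=(H,\ldots,H)$, with the hypothesis $H_{\max}H_{\min}<q^{1+1/(2\Theta)}$ becoming $H<q^{1/2+1/(4\Theta)}$ and the bound collapsing to $H^{n-(n+1)/(2r)}$ as you compute. Nothing further is needed.
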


\subsection{The strength of Theorem  \ref{thm_main_mult}: quantifications}\label{sec_strength}

In general, for a Burgess-style result such as (\ref{Burgess_1}),which holds for a range of integers $r$, to assess its strength for $H$ near the lower-bound threshold allowed for $H$ in terms of $q$, we must compute which value of $r$ produces maximum savings. For example, in Burgess's original result, 
if $H=q^{1/4 +\kappa}$ the bound (\ref{Burgess_1}) with the parameter $r$ yields the upper bound $|S(N,H)| \ll Hq^{-\del}$ where  $\del = (4\kappa r -1)/(4r^2)$. Computing the maximum of $\del$ with respect to $r$, we see that by choosing $r$ to be the nearest integer to $1/(2\kappa)$ we may obtain the best value  $\del \approx \kappa^2$. 

We  perform an analogous optimization of our result, summarized in two corollaries.
 \begin{cor}\label{cor_threshold_same}
 For each $n \geq 2$ and $r \geq 1$, Theorem \ref{thm_main_mult} with the parameters $n,r$ provides a nontrivial upper bound 
$ |S(F;\Nbf,\Hbf)|= o_{n,r,\Del,D}(\|\Hbf\|)$
 for all $\Hbf = (H,\ldots, H)$ with $H=q^{\be}$ with $\be$ in the  range
 \beq\label{cor_range_beta_n}
  \frac{1}{2} - \frac{\Theta-1}{2\Theta(n+1)} < \be \leq \frac{1}{2} + \frac{1}{4\Theta}, 
  \eeq
 in which $\Theta = \Theta_{n, r} = \lfloor (r-1)/(n-1) \rfloor.$  This range includes a non-empty interval of $\be<1/2$ as soon as $r \geq 2n-1$, so that $\Theta=\Theta_{n, r}>1$.
 In particular,  this range always requires $\be > \be_n$ with 
 \beq\label{beta_dfn_0}
 \be_n = \frac{1}{2} - \frac{1}{2(n+1)}.
 \eeq
  For $H=q^{\be_n + \kappa}$ for small $\kappa$, we obtain a nontrivial bound $\|\Hbf\| q^{-\del_n}$ with savings approximately of size
 \[\del_n \approx  \frac{(n+1)^2}{4(n-1)}\kappa^2 .\]
 \end{cor}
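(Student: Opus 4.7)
The plan is to derive the corollary by elementary numerical manipulation of Corollary~\ref{thm_main_mult_cor}. Writing $H = q^\beta$ and $\|\Hbf\| = H^n$, the bound (\ref{cor_bound_H}) takes the form $|S(F;\Nbf,\Hbf)| \ll \|\Hbf\|\, q^{-\delta}(\log q)^{n+1}$ with
\[
\delta = \delta(\beta, r, n) = \frac{(n+1)\beta}{2r} - \frac{n\Theta + 1}{4r\Theta}.
\]
The condition $\delta > 0$ rearranges to $\beta > \frac{n\Theta+1}{2(n+1)\Theta}$, and a line of algebra rewrites this as $\beta > \tfrac{1}{2} - \tfrac{\Theta-1}{2(n+1)\Theta}$, giving the lower endpoint in (\ref{cor_range_beta_n}). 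The upper endpoint $\beta \leq \tfrac{1}{2} + \tfrac{1}{4\Theta}$ is precisely the hypothesis $H^2 = q^{2\beta} < q^{1+1/(2\Theta)}$ of Theorem~\ref{thm_main_mult} specialized to equal coordinates.

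Next I would observe that this interval contains some $\beta < 1/2$ if and only if its lower endpoint lies below $1/2$, which holds iff $\Theta > 1$; since $\Theta = \lfloor (r-1)/(n-1)\rfloor$, this is equivalent to $r \geq 2n-1$. Moreover, because $(\Theta-1)/\Theta < 1$ for every finite $\Theta$, the lower endpoint always strictly exceeds $\beta_n = \tfrac{1}{2} - \tfrac{1}{2(n+1)}$, with equality only in the limit $\Theta \to \infty$; this verifies the claim $\beta > \beta_n$.

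For the quantitative savings near the threshold, I would substitute $\beta = \beta_n + \kappa$ and simplify, noting that the identity $(n+1)\beta_n = n/2$ causes the $O(1)$ terms to cancel and leaves
\[
\delta = \frac{(n+1)\kappa}{2r} - \frac{1}{4r\Theta}.
\]
Treating $r$ as continuous with $\Theta \approx r/(n-1)$ gives $\delta \approx \frac{(n+1)\kappa}{2r} - \frac{n-1}{4r^2}$; setting $d\delta/dr = 0$ produces the optimum $r \approx (n-1)/((n+1)\kappa)$, and substituting back yields $\delta \approx \frac{(n+1)^2\kappa^2}{4(n-1)}$, matching the claim. The single subtlety — the only step I would flag as an obstacle — is verifying that the integer value of $r$ (and the corresponding $\Theta = \lfloor(r-1)/(n-1)\rfloor$) nearest this continuous optimum still produces the stated leading-order savings. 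Because the continuous optimum scales as $1/\kappa$, rounding $r$ to an integer and replacing $\Theta$ by its floor introduces only relative errors of $O(\kappa)$ in each quantity, hence in $\delta$, which are absorbed into the ``$\approx$'' in the stated asymptotic. Beyond this bookkeeping, the proof is entirely a two-variable optimization and presents no genuine difficulty.
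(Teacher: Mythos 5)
Your proposal is correct and follows essentially the same route as the paper: the paper proves Corollaries \ref{cor_threshold_same} and \ref{cor_threshold_diff} simultaneously by introducing a parameter $c_0$ (with $c_0=1$ recovering your computation), rearranges the nontriviality condition to the same lower endpoint $\frac{n\Theta+1}{2\Theta(n+1)}$, and then optimizes $\delta \approx \frac{(n+1)\kappa}{2r}-\frac{1}{4r\Theta}$ over continuous $r$ with $\Theta\approx r/(n-1)$, arriving at $r\approx (n-1)/((n+1)\kappa)$ and $\delta_n\approx \frac{(n+1)^2}{4(n-1)}\kappa^2$. Your explicit remark that rounding $r$ to an integer and taking the floor in $\Theta$ only perturbs $\delta$ at relative order $O(\kappa)$ is a point the paper passes over silently, so no gap.
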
	
 The threshold $\be_n$ defined in (\ref{beta_dfn_0}) has appeared in  $n$-dimensional Burgess bounds that were previously proved in very special cases, such as \cite{Bur67,Bur68,DavLew63} (see  \S \ref{sec_lit} for an overview, including the stronger results \cite{Cha08,Cha09,BouCha10,HB16}).

In full generality, our main result Theorem \ref{thm_main_mult} is in fact stronger than Corollary \ref{cor_threshold_same}, as it can allow one or more of the lengths $H_i$ to be smaller than $q^{\be_n}$, as long as other $H_i$ are commensurably larger. We cannot let $H_i$ vary in a completely uncontrolled fashion, since our savings comes from the smallest parameter $H_{\min}$; thus we assume that ${\bf H}$  is proportionate, in the sense that $H_{\min}\ge\norm{{\bf H}}^{c_0/n}$ for some constant $0<c_0\le1$. The relation (\ref{be_relation}) below shows that if we take $c_0$ smaller so that $H_{\min}$ becomes a decreasing proportion of $\|\Hbf\|$, the restriction on the geometric mean $\|\Hbf\|^{1/n}$ so that our Burgess bound is nontrivial, is forced into an ever shorter range near $\|\Hbf\|^{1/n}=q^{1/2}$.

  \begin{cor}\label{cor_threshold_diff}
 For each $n \geq 2$ and $r\geq 1$, Theorem \ref{thm_main_mult} with the parameters $n,r$ provides a nontrivial upper bound 
$ |S(F;\Nbf,\Hbf)|= o_{n,r,\Del,D}(\|\Hbf\|)$
 for all $\Hbf = (H_1,\ldots, H_n)$ with $H_{\min} H_{\max} < q^{1/2 + 1/2\Theta}$ as long as $H_{\min}  \geq \| \Hbf \|^{c_0/n}$ for some $0< c_0 \leq 1$ and $\|\Hbf\|^{1/n} = q^\be$ with $\be$ in the   range
 \beq\label{cor_range_beta_n'}
   \frac{1}{2} - \frac{c_0\Theta-1}{2\Theta(n+c_0)} < \be \leq \frac{1}{2} + \frac{1}{4\Theta}, 
   \eeq
 in which $\Theta = \Theta_{n, r} = \lfloor (r-1)/(n-1) \rfloor.$  This range includes a non-empty interval of $\be<1/2$ as soon as $r \geq (1/c_0 + 1)(n-1)+1$, so that $c_0 \Theta >1$. In particular this range always requires $\|\Hbf\|^{1/n} = q^\be$ with 
 \beq\label{be_relation}
 \be > \be_{n,c_0} = \frac{1}{2} - \frac{c_0}{2(n+c_0)} \geq \frac{1}{2} - \frac{1}{2(n+1)}.
 \eeq
 Alternatively, we can state that Theorem \ref{thm_main_mult} obtains a nontrivial upper bound if 
 \[ \| \Hbf \| H_{\min} \gg q^{n/2 + \kappa}\]
  for some small $\kappa$.  As $\kappa \maps 0$ we obtain a nontrivial bound $\|\Hbf \|q^{-\del_n}$ with 
 \[ \del_n \approx  \frac{(n+c_0)^2}{4(n-1)}\kappa^2 .\]
 \end{cor}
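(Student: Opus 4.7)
The proof is a calibration of Theorem~\ref{thm_main_mult}; no new analytic input is required, only algebra and a standard Burgess-style optimization of the parameter $r$. My plan is to start from (\ref{thm_bound_H}), read off the precise threshold at which the bound becomes nontrivial, translate it into a constraint on $\be$ via the hypothesis on $H_{\min}$, and then optimize over $r$ to extract the asymptotic savings.

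First I would divide (\ref{thm_bound_H}) by $\norm{\Hbf}$: the resulting ratio is $(\norm{\Hbf}H_{\min})^{-1/(2r)}q^{(n\Theta+1)/(4r\Theta)}(\log q)^{n+1}$, and a short exponent comparison shows this is $o(1)$ exactly when $\norm{\Hbf}H_{\min}>q^{n/2+1/(2\Theta)}$. Substituting $\norm{\Hbf}^{1/n}=q^{\be}$ and the worst-case value $H_{\min}=q^{c_0\be}$ permitted by the hypothesis $H_{\min}\geq\norm{\Hbf}^{c_0/n}$, this threshold becomes $(n+c_0)\be>n/2+1/(2\Theta)$, which rearranges immediately into the lower endpoint $\tfrac{1}{2}-\tfrac{c_0\Theta-1}{2\Theta(n+c_0)}$ of the interval (\ref{cor_range_beta_n'}). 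The upper endpoint $\be\leq 1/2+1/(4\Theta)$ is a convenient sufficient form of the applicability hypothesis $H_{\max}H_{\min}<q^{1+1/(2\Theta)}$ of Theorem~\ref{thm_main_mult}, saturated when all $H_i$ coincide.

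For non-emptiness I would observe that the interval (\ref{cor_range_beta_n'}) contains points $\be<1/2$ exactly when the lower endpoint falls below $1/2$, i.e.\ when $c_0\Theta>1$. Chasing the definition $\Theta=\floor*{(r-1)/(n-1)}$ and being careful with the floor gives the sufficient condition $r\geq(1/c_0+1)(n-1)+1$ as stated. Letting $r$, and hence $\Theta$, tend to infinity sends the lower endpoint to $\be_{n,c_0}=\tfrac{1}{2}-\tfrac{c_0}{2(n+c_0)}$; the closing inequality $\be_{n,c_0}\geq\tfrac{1}{2}-\tfrac{1}{2(n+1)}$ is then immediate from $c_0\leq 1$.

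For the asymptotic savings I would parameterize by $\be=\be_{n,c_0}+\kappa$ with $\kappa$ small and take $H_{\min}$ at its smallest admissible value $q^{c_0\be}$, so that $\norm{\Hbf}H_{\min}=q^{n/2+(n+c_0)\kappa}$. The exponent of $q$ in the ratio $|S|/\norm{\Hbf}$ then collapses to $-(n+c_0)\kappa/(2r)+1/(4r\Theta)$. Setting $\Theta=t/((n+c_0)\kappa)$ and using $r\sim(n-1)\Theta$ for large $r$, the optimization reduces to maximizing $(2t-1)/t^2$, which is attained at $t=1$; this yields the claimed optimal savings $\del_n\approx(n+c_0)^2\kappa^2/(4(n-1))$. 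No real obstacle is anticipated: the argument is entirely algebraic, and the only point that demands some care is reconciling the floor in $\Theta=\floor*{(r-1)/(n-1)}$ with the continuous optimization in $t$ as $\kappa\maps 0$.
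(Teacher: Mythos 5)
Your proposal is correct and follows essentially the same route as the paper: divide the bound of Theorem \ref{thm_main_mult} by $\norm{\Hbf}$, translate the nontriviality condition into the threshold $\be>\frac{n\Theta+1}{2\Theta(n+c_0)}$ using the worst case $H_{\min}=q^{c_0\be}$, and then optimize the exponent over $r$ near $\be=\be_{n,c_0}+\kappa$ with $\Theta\approx(r-1)/(n-1)$ to obtain $\del_n\approx\frac{(n+c_0)^2}{4(n-1)}\kappa^2$. The only difference is cosmetic — you optimize via the substitution $\Theta=t/((n+c_0)\kappa)$ and maximize $(2t-1)/t^2$, whereas the paper writes $\del$ as $a/r-(br-c)/(r(r-1))$ and locates the critical $r$ directly — and both give the same optimal $r\approx(n-1)/(\kappa(n+c_0))$.
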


\subsection{Overview of   previous literature}\label{sec_overview}
To situate our results, we recall previous literature on Burgess bounds and in particular for the sums $S(F;\Nbf,\Hbf)$.
\subsubsection{The classical Burgess bound}\label{sec_apps}
For any integer $q \geq 1$, the P\'olya-Vinogradov inequality states that 
$ |S(N,H)| \ll q^{1/2} \log q$ (see \cite{Pol18,Vin18} or for a modern treatment \cite[\S12.4]{IwaKow04}).
 This provides a nontrivial upper bound for $|S(N,H)|$ as long as $H \gg q^{1/2+\ep}$ for some $\ep>0$. When $H$ is shorter than this range, the sum is considered to be ``short,'' and obtaining an $o(H)$ bound is much more difficult. Conjecturally, under the Generalized Riemann Hypothesis, a bound as strong as
$ |S(0,H)| \ll_\ep H^{1/2} q^\ep$
  should hold for all $\ep>0$, thus leading to a nontrivial upper bound in any range $H \gg_\ep q^{3\ep}$ (see e.g. \cite[Eqn. (12.54)]{IwaKow04}, or \cite[Eqn. (9.6)]{FIM13}; see also the more general Conjecture $C_n$ in \cite[\S 9]{FIM13}).

Burgess's works \cite{Bur57,Bur62B,Bur63A,Bur86}
show that for $\chi$ a primitive character to a prime modulus $q$, for all integers $r \geq 1$, the inequality (\ref{Burgess_1}) holds, 
with an implied constant uniform in $N$, yielding a nontrivial bound for $H\gg q^{1/4+\kappa}$, $\kappa>0$. More generally, with $\log q$ replaced by $q^\ep$ for arbitrarily small $\ep$, Burgess proved that this bound also holds for cube-free moduli $q$ for all $r \geq 1$, and for any integer $q$, for $r \leq 3$. 
The Burgess bound remains essentially unimproved since its inception, despite significant interest, due to its applications.

 As a  consequence of (\ref{Burgess_1}), Burgess \cite{Bur63A} proved a landmark subconvexity bound 
  \[ |L(1/2 +it,\chi)| \ll_{t,\ep} q^{1/4-1/16 + \ep} \]
  for all $\ep>0$, with $\chi$ a non-principal Dirichlet character modulo $q$ as above; there is a corresponding hybrid subconvexity bound $|L(1/2+it,\chi)| \ll_\ep (|t|q)^{1/4 - 1/16 + \ep}$ of Heath-Brown \cite{HB80}.  This remains the best bound known to hold for all Dirichlet $L$-functions. (Special cases of the modulus $q$ in which  a better subconvexity bound is known include:  smooth moduli \cite{GraRin90,Gol10,Cha14} and most recently \cite{Irv16};  prime-power moduli  \cite{Pos56,BLT64,Gal72} and most recently \cite{Mil16},   or powerful moduli \cite{Iwa74}. Most recently, Petrow and Young have proved a better Weyl-strength subconvex estimate of size $q^{1/6}$ for all cube-free moduli $q$ \cite{PetYou18x}.) There is great interest in establishing bounds of at least this strength in more generality; this establishes the notion of a ``Burgess exponent'' for analogous bounds in higher rank contexts. For example, in the $\mathrm{GL}(2)$ setting, the ``Burgess bound'' for an $L$-function of a Hecke cusp form $g$ twisted by a primitive Dirichlet character $\chi$ modulo $q$ is
  $|L(1/2+it,g \otimes \chi) | \ll_{g,\ep} q^{1/2 - 1/8 + \ep}$ for all $\ep>0$ (as has been obtained in \cite{Byk96, BloHar08}). Reaching the Burgess exponent in new settings, or even re-proving such  Burgess bounds is currently an important proving ground for new methods (e.g. Munshi \cite{Mun17x} via a $\mathrm{GL}(2)$ delta method and subsequently \cite{AHLS18x} via a trivial delta method).

In another direction, the Burgess bound establishes an upper bound for the least quadratic non-residue $n(p)$ modulo a prime $p$. Vinogradov conjectured that  $n(p) \ll_\ep p^\ep$ for every $\ep>0$; Burgess's bound (\ref{Burgess_1}) proves $n(p) \ll_\ep p^{(4\sqrt{e})^{-1} + \ep}$ for any $\ep>0$, which held the record from \cite{Bur57} until the quantification in \cite{BanGuo17}.
In this vein, there are continued efforts  toward the goal of improving the inequality (\ref{Burgess_1}) directly, such as reducing the power of the logarithm (see \cite[Eqn. (12.58) and Remark p. 329]{IwaKow04} and \cite{KSY17x}),
deducing improvements in certain special cases from conjectural improvements on the P\'olya-Vinogradov inequality \cite{FroGol17x}, and making connections to the Elliott-Halberstam conjecture and ``Type II sums'' in sieve methods  \cite[Conjecture 1.5, Thm. 1.6, Remark 1.7]{Tao15}.

\subsubsection{Previous literature on special cases of $S(F; \Nbf,\Hbf)$}\label{sec_lit}
In the $n$-dimensional setting of the sums $S(F;\Nbf,\Hbf)$,  previous literature  mainly focused on two special cases. Burgess \cite{Bur67,Bur68} considered the case in which 
\beq\label{multilinear}
F(\xbf) = \prod_{i=1}^n L_i(\xbf)
\eeq
 is a product of $n$ linear forms $L_i \in \Z[x_1,\ldots, x_n]$ that are linearly independent over $\F_q$, for $q$ prime.   In this multilinear setting, he proved a nontrivial bound $|S(F;\Nbf,\Hbf)|  = O( \|\Hbf\|q^{-\del})$ for $\Hbf = (H,\cdots,H)$ and a certain $\del = \del(\kappa)>0$ as long as $H =  q^{\be_n + \kappa}$ for some $\kappa>0$, with $\be_n$ as defined in (\ref{beta_dfn_0}).
 Bourgain and Chang \cite{BouCha10} incorporated ideas from additive combinatorics to improve this significantly, proving a nontrivial bound $|S(F;\Nbf,\Hbf)|  = O( \|\Hbf\|q^{-\del})$ for $\Hbf = (H,\cdots,H)$ and a certain $\del = \del(\kappa)>0$ as long as $H =  q^{1/4 + \kappa}$ for some $\kappa>0$, thus obtaining an $n$-dimensional result as strong as the original Burgess threshold in each dimension.
 
The second case in which significant results are known is in dimension $n=2$ when $F$ is a binary quadratic form. 
 In this special case, the work of Burgess above, for bilinear sums in $n=2$, combined with results of  Davenport and Lewis \cite{DavLew63} on analogues of the Burgess bound over $\F_{q^2}$, initially provided a nontrivial upper bound for $S(F,\Nbf,\Hbf)$  for $q$ prime and $H_i> q^{1/3+\kappa}$ (that is $H_i> q^{\be_2+\kappa}$ with $\be_2$ as in (\ref{beta_dfn_0})), where $F$ is any binary quadratic form that is not a perfect square over $\F_q$; that is for any $F(x_1,x_2) = x_1^2+ ax_1x_2 + bx_2^2$ with $a^2 \not\con 4b \modd{q}$. 
Chang \cite[Thm. 11]{Cha09} introduced ideas of additive combinatorics to this setting, and improved this to a nontrivial upper bound for $S(F,\Nbf,\Hbf)$ for $H_1,H_2> q^{1/4+\kappa}$, i.e. a 2-dimensional result as strong as the original Burgess threshold in each dimension. Most recently, Heath-Brown proved that this latter result continues to hold for any odd square-free modulus $q$ such that $(q,\det(F))=1$ \cite[Thm. 3]{HB16}.

 It remains an interesting open question to bound  $S(F;\Nbf,\Hbf)$ in the fully general case of  $(\Del,q)$-admissible forms $F$, with all $H_i$ as short as $q^{1/4+\kappa}$ for $\kappa>0$.

 \subsubsection{Further related literature}\label{sec_related_lit}
We briefly mention certain other results that are related to multivariate sums similar to $S(F;\Nbf,\Hbf)$, although not of exactly the type we consider in this paper. Davenport and Lewis \cite{DavLew63} considered the case of $\chi$ a nonprincipal character of $\F_{q^n}^*$ for $q$ prime, and a linear form $F(x_1,\ldots, x_n) = \om_1 x_1 + \cdots + \om_n x_n$ for $\om_1,\ldots, \om_n$ a fixed basis of $\F_{q^n}$. They  proved that 
$S(F,\Nbf,\Hbf)  = O (\|\Hbf\| q^{-\del})$ for some $\del = \del(\kappa)$ where $H_i > q^{\be_n + \kappa}$ for some $\kappa>0$. 
As Burgess remarked in \cite{Bur68}, for $n=2$ this provides a corresponding upper bound for $S(F;\Nbf,\Hbf)$ in the case that $F$ is an irreducible binary quadratic form $x_1^2 +ax_1x_2+bx_2^2$ over $\F_q$, in which case $\chi(F(x_1,x_2))$ is a character $\modd{q}$ of $x_1 + \om x_2 \in \Q(\om)$ where $\om =(1/2) a + (1/2) \sqrt{a^2-4b}$.
 In the $n$-dimensional setting, if one assumes that $\om_1,\ldots, \om_n$ is a certain special type of basis (such as a power basis), stronger results have also been obtained in this setting; Burgess \cite{Bur67b} and Karatsuba \cite{Kar70} proved nontrivial upper bounds in the stronger range $H>q^{1/4+\kappa}$.  
Without such special assumptions, Chang  improved on Davenport and Lewis (for $n \geq 5$) by proving a nontrivial bound as soon as $\|\Hbf\| > q^{2n/5+\kappa}$ for some $\kappa>0$ \cite{Cha08}, and furthermore in dimension $n=2$ Chang proved that $H_i \gg q^{1/4+\kappa}$ suffices for any $\kappa>0$ \cite[Thm. 5]{Cha09}.  See also \cite{Cha08} for certain results of Burgess-type for multiplicative character sums over sumsets. 

Finally, we mention work on mixed character sums in multivariate settings, of the form $S(F,g;\Nbf,\Hbf) = \sum_{\xbf \in (\Nbf,\Nbf+\Hbf]} e(g(\xbf))\chi(F(\xbf))$, with a polynomial $g \in \R[x_1,\ldots, x_n]$. In \cite{Pie16} the first author proved nontrivial upper bounds for such multivariate sums in the regime $H_i \gg q^{1/4 +\kappa}$, in the special case $F(\xbf) = x_1\cdots x_n$ (generalizing \cite{HBP15} in dimension $n=1$). This was later generalized by Kerr \cite{Ker14x} to the case of $F(\xbf)$ multilinear as in (\ref{multilinear}). A second 
paper in this series will prove Burgess bounds for $S(F,g;\Nbf,\Hbf)$ for any $(\Del,q)$-admissible form $F$.

\subsection{Outline of the paper}
We present in \S \ref{sec_heuristic} a heuristic overview of the proof of Theorem \ref{thm_main_mult}, which illustrates how the stratification result of the second author \cite{Xu18} plays a key role. 
In \S \ref{sec_prelim} we gather together the lemmas we need to motivate and utilize the condition of $(\Del,q)$-admissibility. In \S \ref{sec_Xu} we give a convenient restatement and strengthening of the stratification results \cite{Xu18} of the second author; we expect this version will be of independent interest in other applications. In \S \ref{sec_Burgess_argument} we begin the $n$-dimensional Burgess argument, reaching the key new novel steps in \S \ref{sec_MR}, which carries out the stratification and a Menchov-Rademacher argument involving permuting variables. In \S \ref{sec_Burgess_conclude} we complete the Burgess argument and choose parameters optimally; subsequently we verify the corollaries. We provide a brief appendix \S \ref{sec_compare} including a conditional argument that assumes a stronger stratification result, which shows that the threshold $\be_n$ is stable under such an improvement.

\subsection{Notation}
We will use the notation $\xbf \in (\Nbf, \Nbf + \Hbf]$ to denote the range of the sum over a box $(\Nbf, \Nbf+\Hbf] = \prod_i (N_i,N_i+H_i]$, and will let $\| \Hbf \| = \prod_i H_i$ for any tuple $\Hbf$. We will write $\abf q$ to mean $(a_1q,\ldots, a_nq)$ and $\abf/q$ to mean $(a_1/q,\ldots, a_n/q)$. We will also use notations such as $\xbf \leq \abf$ to denote $x_i \leq a_i$ for $i=1,\ldots, n$ and $\abf \modd{p}$ to mean we regard each $a_i \modd{p}$. We define $H_{\max}:=\max_i H_i$ and $H_{\min}:=\min_i H_i$. 
We will use the Vinogradov notation $A \ll B$ to denote that there exists a constant $C$ such that $|A | \ll CB$, and $A\ll_\kappa B$ to denote that $C$ may depend on the parameter $\kappa$. In the following work, all implied constants may depend on $n, r, D, \Delta:=\ord \chi$ and possibly an arbitrarily small $\ep>0$ without further specification,  but will never depend on $\Nbf, \Hbf, q$.

\section{Method of proof: an overview}\label{sec_heuristic}
In this section we recall the main points of the Burgess method in dimension 1,   outline the difficulties that arise in $n \geq 2$ dimensions, and then sketch how we overcome these difficulties to obtain our main theorem.

At its heart, the Burgess method in the classical $1$-dimensional setting builds  from a ``short'' character sum $ S(N;H)$ of length $H \ll q^{1/2+\ep}$, a ``long'' character sum over a complete set of residues modulo $q$. Doing so by a Fourier expansion only works efficiently if the sum is not too short, that is, if the character sum is of length $H$ with at least $H\gg q^{1/2+\ep}$ in length (see \S \ref{sec_Fourier}).  Burgess's method instead dissects and translates the sum into many ``short-short'' sums of length $H/p$ for some prime $p$ of size roughly $P$ (with $P$ to be chosen optimally in terms of $H$ and $q$), and averaging this process over sufficiently many choices of $p$, these short-short sums would become distributed across a long interval of length $q$. If this process is done with enough redundancy, the starting points of these short-short sums nearly cover a full set of residues modulo $q$.  Simultaneously, Burgess then considers not just an average of these short-short sums, but a $2r$-th moment, leading to the study of
\beq\label{S_heuristic1'}
\sum_{m} \max_{k \leq 2 H /P} |S(m,k)|^{2r}.
\eeq
At this point, positivity allows one to sum over all $1 \leq m \leq q$ so that the sum over $m$ is a complete set of residues.  The Menchov-Rademacher technique allows one to deduce a bound for this maximal moment (\ref{S_heuristic1'}) from a bound for the non-maximal moment 
\beq\label{S_heuristic2'}
\sum_{m \modd{q}}  |S(m,k)|^{2r} 
 \eeq
in which we think of $m$ as varying over the starting points of the short-short sums and $k \leq 2H/P$ as being the new short-short length.
We may write (\ref{S_heuristic2'}) equivalently as 
\beq\label{S_heuristic3'}
 \sum_{x_1,\ldots, x_{2r} \in (0,k]} \sum_{m \modd{q}} \chi(F_{\xbf}(m)),
 \eeq
 in which $F_{\xbf}(m) = (m+x_1)(m+x_2)^{\Del-1} \cdots (m+x_{2r})^{\Del-1}$. If $F_{\xbf}(m)$ is not a perfect $\Del$-th power modulo $q$, then the Weil bound $O(q^{1/2})$  applies to the sum over $m$, and we say the tuple $\xbf$ is ``good'' (which is the generic case); otherwise $\xbf$ is ``bad'' (which is a sparse case) and we apply the trivial  bound $O(q)$ to the sum over $m$. Balancing the contributions of these two cases leads to the optimal choice of $P$ and the Burgess bound (\ref{Burgess_1}).

Generalizing this argument to the $n$-dimensional case, we will prove in (\ref{SFNH_D4}) that for any $r \geq 1$,
 \beq\label{SFNH_D4_intro}
  |S(F;\Nbf,\Hbf)| \ll  (\log P)P^{n-1/2r}  \| \Hbf \|^{-1/2r} \left(\sum_{\bstack{\mbf}{ |m_i| \leq 2q}} \max_{\kbf \leq 2\Hbf/P} |S(F;\mbf,\kbf)|^{2r}\right)^{1/(2r)} .
 \eeq
 This will ultimately reduce the problem of bounding $|S(F;\Nbf,\Hbf)|$ to bounding 
\beq\label{S_heuristic2_intro}
 \sum_{\mbf \modd{q}} |S(F;\mbf,\kbf)|^{2r}
   \leq \sum_{\bstack{\xbf^{(1)},\ldots,\xbf^{(2r)} }
   {\xbf\up i\in(\zerobf,\kbf]}} \left| \sum_{\mbf \modd{q}} \chi (F_{\{\xbf\}}(\mbf)) \right|,
 \eeq
where we define for each collection $\{\xbf\}  = \{\xbf^{(1)},\ldots,\xbf^{(2r)} \}$  with $\xbf^{(i)} \in \Z^n$ the polynomial
\beq\label{dfn_Gpoly}
 F_{\{\xbf\}} (\Xbf) = F_{ \{\xbf\}} (X_1,\ldots, X_n) = \prod_{i=1}^{2r} (F(\Xbf + \xbf^{(i)}))^{\del(i)}, 
 \eeq
where $\del(i)=+1$ if $i$ is odd and $\Delta -1$ if $i$ is even, where $\Del$ is the order of $\chi$ modulo $q$. One would hope that if the  $\xbf^{(i)}$ are appropriately independent (that is, the ``good'' case),  a  generalization of the Weil bound would yield square-root cancellation,  that is an $O(q^{n/2})$ bound for 
\beq\label{char_sum_intro}
 \left|\sum_{\mbf \modd{q}}
\chi(F_{\{\xbf\}}(\mbf))\right|.
\eeq
But achieving such a bound has been a critical barrier to generalizing the Burgess method to this $n$-dimensional setting. One difficulty is that the leading form of $F_{\{\xbf\}}(\Xbf)$ (the homogeneous part of highest degree) defines a highly singular projective variety, whereas previous literature on  Weil bounds required this either to be a nonsingular projective variety \cite{Kat02}, or could only allow certain singular varieties that are not general enough for our application; see e.g. \cite{Roj05} and \cite{Roj06}.

Moreover, in dimensions $n \geq 2$, as well as the two extremal cases in which the sum (\ref{char_sum_intro}) is $O(q^{n/2})$ or $O(q^n)$, there may be intermediate cases $O(q^{(n+j-1)/2})$ for $j=1,\ldots, n+1$. Indeed, suppose we partition the $\{ \xbf^{(1)},\ldots,\xbf^{(2r)} \} \in (\zerobf,\kbf]^{2r}$  into the following types: those  belonging to a good set denoted by $\mathrm{Good}(\kbf)$, are such that  
\[ \left| \sum_{\mbf \modd{q}} \chi (F_{\{\xbf\}}(\mbf))\right| \leq C q^{n/2}\]
for a certain constant $C$,
and those belonging to the $j$-th bad set, denoted by $\mathrm{Bad}_j(\kbf)$, are such that 
\[\left| \sum_{\mbf \modd{q}} \chi (F_{\{\xbf\}}(\mbf))\right| > C q^{(n+j-1)/2} \qquad \text{but} \qquad \left| \sum_{\mbf \modd{q}} \chi (F_{\{\xbf\}}(\mbf))\right| \leq C q^{(n+j)/2}.\]
Then according to this dissection,
\beq\label{good_bad_model}
  \sum_{\mbf \modd{q}} |S(F;\mbf,\kbf)|^{2r} \leq C|\mathrm{Good}(\kbf)|q^{n/2} + C \sum_{j=1}^n|\mathrm{Bad}_j(\kbf)| q^{(n+j)/2}.
  \eeq
 
  Certainly $|\mathrm{Good}(\kbf)|  \leq \|\kbf\|^{2r}$. The real question is how to bound $|\mathrm{Bad}_j(\kbf)|$ for each $j=1,\ldots, n$.
The recent work of the second author \cite{Xu18} proves a set of bounds that are perfectly suited for our purposes. 
This takes the form of a ``stratification,'' in the spirit of \cite[Prop. 1.0]{Fou00}, \cite[Prop. 3.2, Thm. 3.3]{Lau00},  \cite[Thm. 1.1 and 1.2]{FouKat01}.  

To give the flavor of this stratification, we state here a special case in dimension $n=2$ (see  \S \ref{sec_Xu}  and in particular Theorem \ref{thm_Xu} for the full setting).  
Let $\kbf=(k,k,\ldots,k)$ be a fixed tuple in $\Z^n$, with $k\geq 1$. We will consider tuples $\xbf \in \Z^n$ that lie in the box $\xbf \in (\zerobf,\kbf]$, and more generally, a collection $\{\xbf\}  = \{\xbf\up1,\dots, \xbf\up{2r}\}\in(\zerobf,\kbf]^{2r}$ of $2r$ such $n$-tuples.
\begin{thm}\label{thm_Xu_2}
Let $n=2$ and let $r,\Del,D \geq 1$ be fixed.  
There exists a constant $C=C(n,r,D)$ and a constant $C'' = C''(n,r,\Del,D)$ such that the following holds. For any prime $q$, for any non-principal multiplicative Dirichlet character $\chi$ of order $\Delta$ modulo $q$, and 
for any $F \in \Z[x_1,x_2]$ that is $(\Del,q)$-admissible, for every tuple $\kbf = (k,k)$,
\[ \# \left\{ \{\xbf\up1,\dots, \xbf\up{2r}\}\in(\zerobf,\kbf]^{2r}
  : \left|  \sum_{\mbf \modd{q}}
\chi(F_{\{\xbf\}}(\mbf))
\right|  > C q^{(n+j-1)/2} \right\} \leq C''
 \begin{cases}
		k^{4r} & \text{if  $j=0$}\\
		k^{3r+1} & \text{if $j=1$}\\
		k^{2r} & \text{if  $j=2$}.
		\end{cases}\]
\end{thm}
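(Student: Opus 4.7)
The plan is to deduce Theorem~\ref{thm_Xu_2} as a specialization of the general stratification theorem (Theorem~\ref{thm_Xu}, stated in \S\ref{sec_Xu}) applied to the polynomial family $\{\xbf\} \mapsto F_{\{\xbf\}}$ defined in (\ref{dfn_Gpoly}). The parameter space is the affine space $\Abb^{4r}$ (since each $\xbf\up{i} \in \Z^n$ with $n = 2$ and there are $2r$ of them), and the $(\Del,q)$-admissibility of $F$ is exactly what makes the stratification nontrivial: without it, $F_{\{\xbf\}}$ could be a perfect $\Del$-th power for all $\{\xbf\}$, trivializing the character.

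First I would invoke Theorem~\ref{thm_Xu} with this family to produce, for each $j \in \{0,1,2\}$, a Zariski-closed ``bad'' locus $W_j \subseteq \Abb^{4r}_{\F_q}$ of degree bounded in terms of $n, r, D$ and of bounded dimension $d_j$, such that $\{\xbf\} \notin W_j$ implies
\[
\left| \sum_{\mbf \modd{q}} \chi(F_{\{\xbf\}}(\mbf)) \right| \leq C q^{(n+j-1)/2},
\]
for an absolute $C = C(n, r, D)$. The stratification of \cite{Xu18}, applied to the specific structure of $F_{\{\xbf\}}$ as a product of $2r$ shifted copies of $F$, should give $d_1 \leq 3r + 1$ and $d_2 \leq 2r$ in the case $n = 2$. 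Then I would count $\Z$-points: for any subvariety $W \subset \Abb^{4r}$ of pure dimension $d$ and bounded degree, the number of integer points of $W$ lying inside a box $(\zerobf, \kbf]^{2r}$ with $\kbf = (k,k)$ is $O_{n,r,D}(k^d)$ by the standard projection-plus-Schwartz--Zippel argument. Applied with $d = 3r + 1$ and $d = 2r$ this yields the bounds $k^{3r+1}$ and $k^{2r}$; the estimate $k^{4r}$ for $j = 0$ is merely the trivial total count of tuples in the box.

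The main obstacle is absorbed into Theorem~\ref{thm_Xu} itself: producing a stratification whose codimension jumps track the savings in the complete character sum, despite the severe singularities of $F_{\{\xbf\}}$ (whose leading form defines a highly singular projective variety, so that classical Weil-type bounds such as \cite{Kat02, Roj05, Roj06} are not directly applicable), is the principal technical content of \cite{Xu18} and its reformulation in \S\ref{sec_Xu}. Once that tool is granted, the proof of Theorem~\ref{thm_Xu_2} reduces to a dimension count plus a classical lattice-point estimate, with no further arithmetic difficulty specific to $n = 2$.
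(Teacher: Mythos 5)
Your proposal is correct and matches the paper's route: Theorem~\ref{thm_Xu_2} is presented there as the immediate specialization of the general stratification (Theorem~\ref{thm_Xu}, deduced from Theorem~\ref{thm_F_power} and Corollary~\ref{cor_S_empty}) to $n=2$ and $\kbf=(k,k)$, where $\theta_1=r-1$ and $\theta_2=2r$ give exactly the exponents $4r-(r-1)=3r+1$ and $4r-2r=2r$. Your intermediate step of passing through the bad subschemes $W_j$ and then counting box points of a bounded-degree subvariety of dimension $d$ by $O(k^d)$ is precisely how the paper's counting form is obtained from the subscheme form (via \cite[Lemma 1.7]{Xu18}, i.e.\ the bound (\ref{IXB})), so there is no substantive difference.
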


We can interpret this as follows:  for $n=2$, the trivial bound for the number of collections $\{\xbf\}$ in the box $(\zerobf,\kbf]^{2r} \subset \Z^{2nr}$ is $k^{4r}$. 
When $j=1$, we see that at most $O(k^{3r+1})$ collections can violate square-root cancellation, i.e. as soon as $r \geq 2$, generically square-root cancellation holds. This stratification, in its  general formulation  (Theorem \ref{thm_Xu}) is the key input which allows us to prove the Burgess bound for all dimensions $n \geq 2$. (See furthermore \S \ref{sec_strat_1} for a demonstration of why the full stratification is useful.)

We stated  Theorem \ref{thm_Xu_2} for simplicity in the case where the length $\kbf $ has identical values in each coordinate. In our argument, we must instead allow $\kbf = (k_1,\ldots, k_n)$ with  $k_i$ varying independently; in particular this arises in the step when we deduce a bound for the maximal moment in (\ref{SFNH_D4_intro}) from the non-maximal moment in (\ref{S_heuristic2_intro}). This raises another difficulty in the multi-dimensional setting, which we now outline.
 
 Xu's stratification (Theorem \ref{thm_Xu_original}, Theorem \ref{thm_F_power}) shows that for each $j=1,\ldots, n$, the collections $\{ \xbf^{(1)},\ldots,\xbf^{(2r)} \}$ counted by $\mathrm{Bad}_j(\kbf)$ lie on a certain subscheme over $\F_q$ with a certain codimension. 
  In general, fix a dimension $R$,  let $X \subset \Abb_{\F_q}^R$ be a subscheme of codimension $\varpi$ and let $U$ be the sum of the degrees of its irreducible components. For subsets $B_i \subset \F_q$, define the ``box'' $B = \prod_{i=1}^R B_i \subset \Abb^R_{\F_q}$. We require a bound for 
\beq\label{IXB_dfn}
 I_X(B) := \# ( X(\F_q) \intersect \prod_{i=1}^R B_i),
 \eeq
which depends only on the codimension of $X$ and the degree of the irreducible components of $X$. A trivial bound, best possible if $\codim X=0$, is $I_X(B) \leq \|B\| = \prod_{i=1}^R |B_i|$. We need to improve on this when $\codim X \geq 1$.

To gain an intuition, consider the case of $R=2$ and $X$ of codimension 1. A naive hope might be that $I_X(B) \leq  \|B\|^{1/2} = \|B\| \cdot |B_1|^{-1/2} |B_2|^{-1/2}$. But this need not be true. Supposing for example that $|B_1| \approx 1$ is very small while $|B_2|$ is very large, it could be that $X$ of codimension 1 lies along the subset $B_2$ of $B$, thus leading to the bound $I_X(B) \gg |B_2| = \|B\| \cdot |B_1|^{-1} \approx \|B\|$. Thus in general,   in estimating $I_X(B)$ we can only expect to save factors corresponding to the shortest sides of the box $B$. 
Concretely, for $X$ of codimension $\varpi$, we use the fact that if 
\beq\label{ordering_B}
1 \leq |B_1| \leq |B_2| \leq \cdots \leq |B_R| < \infty
\eeq then by \cite[Lemma 1.7]{Xu18},
\beq\label{IXB}
 I_X(B) \leq U \|B\| \cdot |B_1|^{-1} |B_2|^{-1} \cdots |B_{\varpi}|^{-1},
 \eeq
 where $U$ is the degree of $X$.
In our setting, we will apply such bounds in \S \ref{sec_MR} when using a Menchov-Rademacher technique to deduce a bound for a maximal moment from a non-maximal moment. Here, in order to guarantee the ordering (\ref{ordering_B}) we must permute variables in a delicate argument, and apply rearrangement inequalities in order to conclude.

\section{Preliminaries}\label{sec_prelim}

\subsection{Power-free conditions}\label{sec_powerfree}
We say that $F \in \Z[x_1,\ldots, x_n]$ is $d$-th power-free if each non-constant irreducible factor of $F$ over $\Z$  (or equivalently over $\Q$, by Gauss's lemma) appears with multiplicity strictly less than $d$.  In general, given a field $k$, we say that $F\in k[x_1,\ldots, x_n]$ is $d$-th power-free over $k$ when $F=cF_1^{a_1} F_2^{a_2} \cdots F_\ell^{a_\ell}$ with $c \in k^\times$, all $a_i < d$ and all $F_i \in k[x_1,\ldots, x_n]$ are  irreducible and pairwise non-associate.
(We say $G,G' \in k^s[x_1,\ldots, x_n]$ are non-associate if there is no unit in $k^s[x_1,\ldots, x_n]$ such that $G=cG'$.) 
 To be precise, we recall that this property may be specified equivalently over a field $k$ or the separable closure of $k$:

\begin{lemma}\label{lemma_d_free}
Let $k$ be a field and $k^s$ its separable algebraic closure, so $k^s = \overline{k}$ if $k$ is perfect, and in particular if $k$ is finite.
Then for any $F  \in k[x_1,\ldots, x_n]$, $F$ is a perfect $d$-th power over $k$ (up to a nonzero multiplicative constant) if and only if $F$ is a perfect $d$-th power over $k^s$.
Similarly, $F$ is $d$-th power-free over $k$ if and only if $F$ is $d$-th power-free over $k^s$.
\end{lemma}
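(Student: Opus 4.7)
Both assertions of the lemma will be deduced from the following structural fact:

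\medskip
\noindent\emph{Key Claim.} If $F_0\in k[x_1,\ldots,x_n]$ is non-constant and irreducible over $k$, then in $k^s[x_1,\ldots,x_n]$ one has $F_0 = u\prod_{j=1}^r H_j$, where $u\in k^\times$, each $H_j$ is irreducible over $k^s$, the $H_j$ are pairwise non-associate, and each appears with multiplicity exactly one; moreover $G:=\mathrm{Gal}(k^s/k)$ acts transitively on $\{H_1,\ldots,H_r\}$ once the $H_j$ are normalized to be monic in some fixed monomial order.

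\medskip

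First I would prove the Key Claim. Factor $F_0 = u\prod_j H_j^{m_j}$ over $k^s$ with the $H_j$ pairwise non-associate and monic in a fixed monomial order. Elements of $G$ act on coefficients while fixing the variables and hence the chosen leading monomials, so $G$ induces a multiplicity-preserving permutation of the set $\{H_1,\ldots,H_r\}$. Pick any $G$-orbit $O\subset\{H_j\}$ and set $P:=\prod_{H\in O}H\in k^s[\mathbf{x}]$. Being $G$-invariant, $P$ has coefficients in $k^G=k$, so $P\in k[\mathbf{x}]$. Also $P\mid F_0$ in $k^s[\mathbf{x}]$, so $F_0=PQ$ for some $Q\in k^s[\mathbf{x}]$; applying any $\sigma\in G$ to both sides and cancelling $P$ in the integral domain $k^s[\mathbf{x}]$ yields $\sigma(Q)=Q$, whence $Q\in k[\mathbf{x}]$. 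Irreducibility of $F_0$ in $k[\mathbf{x}]$ and non-constancy of $P$ force $Q\in k^\times$, so $F_0 = Q\prod_{H\in O}H$. Comparing this with $F_0=u\prod_j H_j^{m_j}$ in the UFD $k^s[\mathbf{x}]$ forces $\{H_j\}_j=O$ and $m_j=1$ for all $j$, and gives $u=Q\in k^\times$.

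Second, I would deduce the lemma. Factor $F=c\prod_i F_i^{a_i}$ over $k$ with $c\in k^\times$ and pairwise non-associate irreducibles $F_i\in k[\mathbf{x}]$. By the Key Claim, over $k^s$ each $F_i=u_i\prod_j H_{i,j}$ with $u_i\in k^\times$ and distinct irreducibles $H_{i,j}$. For $i\neq i'$ the sets $\{H_{i,j}\}_j$ and $\{H_{i',j}\}_j$ are disjoint: a common element would force the two $G$-orbits to coincide, and the Key Claim would then give $F_i/u_i=F_{i'}/u_{i'}$, making $F_i$ and $F_{i'}$ $k^\times$-associates, contrary to assumption. Therefore the $k^s$-factorization of $F$ is $F=(c\prod_i u_i^{a_i})\prod_{i,j}H_{i,j}^{a_i}$ with all $H_{i,j}$ pairwise non-associate, so the multiplicities appearing in the $k^s$-factorization of $F$ are exactly the $a_i$ appearing in its $k$-factorization. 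Consequently $F$ is $d$-th power-free over $k^s$ iff every $a_i<d$ iff $F$ is $d$-th power-free over $k$; and $F=\lambda G^d$ for some $\lambda\in(k^s)^\times,\ G\in k^s[\mathbf{x}]$ iff every $a_i$ is divisible by $d$, in which case $F=c\bigl(\prod_i F_i^{a_i/d}\bigr)^d$ realizes $F$ as a scalar in $k^\times$ times a $d$-th power from $k[\mathbf{x}]$.

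The main obstacle is the Key Claim in positive characteristic, where one might fear that an inseparable irreducible $F_0\in k[\mathbf{x}]$ could gain multiplicities on passing to $k^s$ (as genuinely happens over the algebraic closure: for example $x^p-t$ becomes $(x-t^{1/p})^p$ over $\overline{\F_p(t)}$). The argument bypasses this by working only with the separable closure, which admits no inseparable extensions of $k$, so that the Galois-orbit-product identity combined with irreducibility of $F_0$ rules out $m_j>1$ regardless of whether $F_0$ is separable.
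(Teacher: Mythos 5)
Your proof is correct. The concluding deduction---factor $F = c\prod_i F_i^{a_i}$ into pairwise non-associate irreducibles over $k$, show that the multiplicities $a_i$ are unchanged upon passing to $k^s$, and read off both assertions---is exactly the paper's argument. The difference is in how the two structural inputs are obtained: the paper simply cites Lemma 3.15, parts (2) and (3), of \cite{Xu18} for the facts that an irreducible polynomial over $k$ is square-free over $k^s$ and that non-associate irreducibles over $k$ share no common factor over $k^s$, whereas your Key Claim proves both simultaneously and self-containedly by the Galois orbit argument (the product over an orbit of monic irreducible factors is $G$-invariant, hence descends to $k$, and irreducibility of $F_0$ over $k$ then forces that product to exhaust $F_0$ with all multiplicities equal to one). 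Your version buys a self-contained proof and correctly pinpoints why the separable closure, rather than the algebraic closure, is the right field to work over in positive characteristic; the paper's version buys brevity at the cost of an external citation.
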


\begin{proof}
We begin with the second claim. Certainly if $F$ is $d$-th power-free over $k^s$ then it  is over $k$.
For the other direction, write $F=cF_1^{a_1} F_2^{a_2} \cdots F_\ell^{a_\ell}$ with $c \in k^\times$, all $a_i < d$ and all $F_i \in k[x_1,\ldots, x_n]$  irreducible and pairwise non-associate.  For each such $F_i$, we recall from \cite[Lemma 3.15 part (2)]{Xu18} that the fact that $F_i$ is irreducible over $k$ implies that $F_i$ is square-free as a polynomial in $k^s[x_1,\ldots, x_n]$. Thus upon factoring $F_i$ over $k^s$ we have $F_i = G_{i,1} G_{i,2} \cdots G_{i,b_i}$ in which each $G_{i,j}$ is irreducible in $k^s$ and as $j$ varies the $G_{i,j}$ are pairwise non-associate. 
Thus the factorization of $F$ over $k^s$ is  
\[ c (G_{1,1}^{a_1} G_{1,2}^{a_1} \cdots G_{1,b_1}^{a_1})  (G_{2,1}^{a_2} G_{2,2}^{a_2} \cdots G_{2,b_2}^{a_2})  \cdots
	 (F_{\ell,1}^{a_\ell} F_{\ell,2}^{a_\ell} \cdots F_{\ell,b_\ell}^{a_\ell}) .\]
	 Next we recall from  \cite[Lemma 3.15 part (3)]{Xu18} that if $F_i,F_{i'} \in k[x_1,\ldots,x_n]$ are non-associate irreducible polynomials, then $F_{i}$ and $F_{i'}$ have no common factors in $k^s[x_1,\ldots, x_n]$. From this we conclude that $G_{i,j}$ and $G_{i',j'}$ are non-associate when $(i,j) \neq (i',j')$.  Thus as $i$ and $j$ vary the $G_{i,j}$ are all pairwise non-associate over $k^s$, so that $F$ remains $d$-th power-free over $k^s$. 
	 
	 Finally, if $F$ is a perfect $d$-th power over $k$ then it also is over $k^s$. In the other direction, if $F$ is a perfect $d$-th power over $k^s$, then in the factorization above, all $a_i$ must be multiples of $d$, so that $F$ factors over $k$ as $F=cG^d$ with $G = F_1^{a_1/d} \cdots F_\ell^{a_\ell/d}$. 
\end{proof}

\subsection{Translation invariance conditions}
It is natural to impose on $F$  that it be appropriately nondegenerate, in the sense that it cannot be made independent of one or more variables. Indeed, if there exists a linear change of coordinates $\xbf \mapsto  \xbf A$ with $A \in \mathrm{GL}_n(\Z)$ such that $F(\xbf A) \in \Z[x_2,\ldots, x_n]$ then we would not expect $|S(F;\Nbf,\Hbf)|$ to obey bounds of the full $n$-dimensional strength that we will obtain.  We will require several equivalent formulations for the condition that $F$ is nondegenerate in this sense.

We recall  six equivalent statements about a polynomial $F \in \Z[x_1,\ldots, x_n]$ having the property that it can be made independent of one of the indeterminates by a linear change of coordinates over $\Z$.

\begin{lemma}[{Lemma 3.20 \cite{Xu18}}]\label{lemma_F_equiv}
Let $F \in \Z[x_1,\ldots, x_n]$  and let $\xbf = (x_1,\ldots, x_n)$ be the row vector of indeterminates. Then the following are equivalent. 
\begin{enumerate}
\item $F$ is invariant under some nontrivial translation in $\overline{\Q}^n$, i.e. there exists $0 \neq \mbf \in \Qbar^n$ such that $F(\xbf) \equiv F(\xbf+\mbf)$.
\item $F$ is invariant under some nontrivial translation in $\Z^n$, i.e. there exists $0 \neq \mbf \in \Z^n$ such that $F(\xbf) \equiv F(\xbf+\mbf)$.
\item $F$ can be made independent of one of the indeterminates by a linear change of coordinates, 
 i.e. there exists $A \in \mathrm{GL}_n(\Z)$ such that $F(\xbf A) \in \Z[x_2,\ldots, x_n]$.
 \item When viewed as a morphism $\Abb_\Z^n \maps \Abb_\Z^1$, $F$ factors through a linear map $\Abb_\Z^n \maps \Abb_\Z^{n-1}$, i.e. there exists an integral $n \times (n-1)$ matrix $B$ and $f \in \Z[x_2,\ldots, x_n]$ such that $F(\xbf) \equiv f(\xbf B)$.
 \item For almost all prime numbers $q$ (all but finitely many), the reduction of $F$ modulo $q$ is invariant under some nontrivial translation in $\overline{\F}_q^n$.
 \item For infinitely many prime numbers $q$, the reduction of $F$ modulo $q$ is invariant under some nontrivial translation in $\overline{\F}_q^n$.
 \end{enumerate}
\end{lemma}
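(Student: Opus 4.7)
The plan is to prove the equivalences in two phases: first the characteristic-zero conditions (1)--(4), then fold in (5) and (6) via (3) $\Rightarrow$ (5) $\Rightarrow$ (6) $\Rightarrow$ (1).

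For the cycle (2) $\Rightarrow$ (3) $\Rightarrow$ (4) $\Rightarrow$ (2), I would argue directly. Given $0 \neq \mbf_0 \in \Z^n$ with $F(\xbf + \mbf_0) = F(\xbf)$, iteration gives $F(\xbf + k\mbf_0) = F(\xbf)$ for all $k \in \Z$, so the polynomial $F(\xbf + t\mbf_0) - F(\xbf) \in \Z[\xbf,t]$ vanishes at infinitely many values of $t$ and hence identically. Thus any rational rescaling of $\mbf_0$ is still a translation direction, and I may take $\mbf_0$ primitive. Extend $\mbf_0$ to a $\Z$-basis of $\Z^n$ and let $A \in \mathrm{GL}_n(\Z)$ have these vectors as rows; since $\ebf_1 A = \mbf_0$, the polynomial $F(\xbf A)$ is periodic of period $1$ in $x_1$, hence (being a polynomial in characteristic zero) a polynomial in $x_2, \ldots, x_n$ only, giving (3). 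For (3) $\Rightarrow$ (4), take $B$ to be the last $n-1$ columns of $A^{-1} \in \mathrm{GL}_n(\Z)$, so $F(\ybf) = f(\ybf B)$. For (4) $\Rightarrow$ (2), the integral $n \times (n-1)$ matrix $B$ has nontrivial left kernel, and any nonzero integer vector $\mbf$ in it satisfies $F(\xbf + \mbf) = f((\xbf + \mbf)B) = F(\xbf)$.

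The equivalence (1) $\Leftrightarrow$ (2) is an algebraic-groups argument. The set $V = \{\mbf : F(\xbf + \mbf) = F(\xbf)\}$ defines a closed subscheme of $\Abb^n_\Z$ cut out by the vanishing (as polynomials in $\xbf$) of the coefficients of $F(\xbf + \mbf) - F(\xbf) \in \Z[\xbf,\mbf]$, and $V$ is a closed subgroup scheme of the additive group on $\Abb^n_\Z$. Over $\Q$, since the characteristic is zero, every closed subgroup of the additive group on $\Abb^n_\Q$ is a linear subspace; and since the defining equations have rational coefficients, this subspace is defined over $\Q$ and satisfies $V_\Q(\Qbar) = V_\Q(\Q) \otimes_\Q \Qbar$. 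Hence $V$ has a nontrivial $\Qbar$-point iff it has a nontrivial $\Q$-point, which (by clearing denominators) is equivalent to (2).

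The implication (3) $\Rightarrow$ (5) is immediate: if $F(\xbf A) \in \Z[x_2, \ldots, x_n]$ with $A \in \mathrm{GL}_n(\Z)$, then for every prime $q$ the reduction remains $x_1$-independent, and translation by $\ebf_1 A \bmod q$ preserves $\bar F$ nontrivially; (5) $\Rightarrow$ (6) is trivial. The main obstacle is the reverse direction (6) $\Rightarrow$ (1), for which I would apply a Nullstellensatz ``spread-out'' argument. If (1) fails then $V(\Qbar) = \{0\}$, so by Hilbert's Nullstellensatz the radical of the ideal $I = (c_1, \ldots, c_M) \subset \Q[\mbf]$ generated by the coefficients $c_i$ of $F(\xbf + \mbf) - F(\xbf)$ in $\xbf$ equals the maximal ideal $\mf = (m_1, \ldots, m_n)$. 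Hence $\mf^N \subset I \cdot \Q[\mbf]$ for some $N$; clearing the finitely many denominators appearing in the expressions $m_i^N = \sum_j a_{ij} c_j$ yields $\mf^N \subset I \cdot \Z[1/c][\mbf]$ for some nonzero integer $c$. For every prime $q \ndiv c$, reducing mod $q$ gives $\mf^N \subset I \cdot \F_q[\mbf]$, so $V(\overline{\F}_q) = \{0\}$, contradicting (6) on this cofinite set of primes. This step is the main obstacle because it is where the essential arithmetic-geometric content of the lemma lies: passing from characteristic $p$ phenomena for infinitely many $p$ back to a characteristic zero phenomenon via an integrality/spread-out argument.
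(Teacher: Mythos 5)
Your proof is correct and logically complete: the cycle $(2)\Rightarrow(3)\Rightarrow(4)\Rightarrow(2)$, together with $(1)\Leftrightarrow(2)$ and $(3)\Rightarrow(5)\Rightarrow(6)\Rightarrow(1)$, covers all six equivalences. Note, however, that the paper does not actually prove this lemma — it imports it verbatim as Lemma 3.20 of \cite{Xu18} — so the only in-paper point of comparison is the field analogue, Lemma \ref{lemma_equiv_k}, whose proof is deliberately elementary. There your argument diverges in two places. For $(1)\Rightarrow(2)$ you invoke the classification of closed subgroup schemes of $\mathbb{G}_a^n$ in characteristic zero plus Galois descent; the paper instead normalizes $\mbf$ by $m_1$, extends scalars to $E=\Q(m_1,\ldots,m_n)$, and applies $\Tr_{E/\Q}$ to produce a rational (hence, after clearing denominators, integral) translation vector. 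The trace argument is more elementary and is the one that survives in positive characteristic (where your subgroup classification fails because of Frobenius graphs); your scheme-theoretic route is cleaner over $\Q$ but buys nothing extra here — indeed the simplest justification that $V(\Qbar)$ is a $\Qbar$-subspace is already contained in your own observation that $F(\xbf+t\mbf)-F(\xbf)$ vanishes identically in $t$, which gives closure under scalar multiplication directly. For $(6)\Rightarrow(1)$, which has no analogue in Lemma \ref{lemma_equiv_k}, your effective Nullstellensatz spread-out argument (writing $\mf^N\subset I$ over $\Q[\mbf]$, clearing denominators, and reducing modulo all $q\ndiv c$) is exactly the standard mechanism for transferring the statement between characteristic zero and all but finitely many residue characteristics, and it is sound; one could equally phrase it as constancy of the fiber dimension of the subgroup scheme $V\to\operatorname{Spec}\Z[1/c]$, but your version is self-contained. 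All the elementary steps — primitivity of $\mbf_0$, extension to a $\Z$-basis, periodicity forcing $x_1$-independence in characteristic zero, and the left-kernel argument for $(4)\Rightarrow(2)$ — check out.
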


We now also require an analogue of this over a field $k$.

\begin{lemma}\label{lemma_equiv_k}
Let $k$ be a perfect field. 
 Let $F \in k[x_1,\ldots, x_n]$ and suppose that $\deg F < \mathrm{char}\, k$ if $k$ is of positive characteristic. Let $\xbf$ be the row vector $(x_1,\ldots, x_n)$ of indeterminates. Then the following are equivalent. 
\begin{enumerate}
\item $F$ is invariant under some nontrivial translation in $\overline{k}^n$, i.e. there exists $\zerobf \neq \mbf \in \overline{k}^n$ such that $F(\xbf) \equiv F(\xbf+\mbf)$.
\item $F$ is invariant under some nontrivial translation in $k^n,$ i.e. there exists $\zerobf \neq \mbf \in k^n$ such that $F(\xbf) \equiv F(\xbf+\mbf)$.
\item $F$ can be made independent of one of the indeterminates by a linear change of coordinates, i.e. there exists $A \in \mathrm{GL}_n(k)$  such that $F(\xbf A) \in k[x_2,\ldots, x_n]$.
\item When viewed as a morphism $\mathbb{A}_k^n \maps \mathbb{A}_k^1$, $F$ factors through a linear map 
$\mathbb{A}_k^n \maps \mathbb{A}_k^{n-1}$, i.e. there exists an $n \times (n-1)$ matrix $B$ with entries in $k$, and $f \in k[x_1,\ldots, x_n]$ such that $F(\xbf) \equiv f(\xbf B)$. 
\end{enumerate}
\end{lemma}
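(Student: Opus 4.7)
The plan is to dispose of the cycle $(2) \Leftrightarrow (3) \Leftrightarrow (4)$ by elementary linear algebra once the translation invariance has been upgraded to a polynomial identity in a scalar parameter, and then to handle $(1) \Rightarrow (2)$ via Galois descent applied to the scheme-theoretic stabilizer of $F$.

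First, $(2) \Rightarrow (1)$ is an inclusion, and $(4) \Rightarrow (2)$ is immediate: any $n \times (n-1)$ matrix $B$ over $k$ has a nontrivial kernel, and any $\mbf \in \ker B \setminus \{\zerobf\}$ gives $F(\xbf + \mbf) = f((\xbf + \mbf) B) = f(\xbf B) = F(\xbf)$. For $(2) \Rightarrow (3)$, the first step is to upgrade the invariance by a nonzero $\mbf \in k^n$ to the polynomial identity $F(\xbf + t\mbf) = F(\xbf)$ in $t$: iterating gives $F(\xbf + j\mbf) = F(\xbf)$ for all $j \in \Z$, and the difference $F(\xbf + t\mbf) - F(\xbf)$, viewed as a polynomial of degree at most $\deg F$ in $t$, has infinitely many zeros in characteristic zero, while in characteristic $p$ the hypothesis $\deg F < p$ forces it to vanish on the $p$ residues mod $p$, exceeding its degree. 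Extending $\mbf$ to a $k$-basis $\mbf = \vbf_1, \vbf_2, \ldots, \vbf_n$ of $k^n$ and letting $A \in \mathrm{GL}_n(k)$ have these as rows, the identity specialized at $t = x_1$ yields
\[ F(\xbf A) = F\Bigl(x_1 \mbf + \sum_{i \geq 2} x_i \vbf_i\Bigr) = F\Bigl(\sum_{i \geq 2} x_i \vbf_i\Bigr) \in k[x_2, \ldots, x_n]. \]
Finally $(3) \Rightarrow (4)$ follows by setting $B := A^{-1} E$ with $E$ the $n \times (n-1)$ matrix embedding $\Abb^{n-1}$ as the last $n-1$ coordinates of $\Abb^n$.

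The heart of the proof is $(1) \Rightarrow (2)$. Let $V \subset \Abb_k^n$ denote the closed subscheme cut out by the polynomial coefficients $\{P_\alpha(\mbf)\}_\alpha$ in the expansion
\[ F(\xbf + \mbf) - F(\xbf) = \sum_\alpha P_\alpha(\mbf)\, \xbf^\alpha, \]
so that $V(\overline{k})$ is exactly the set of translations in $\overline{k}^n$ preserving $F$. I claim $V(\overline{k})$ is a $\overline{k}$-linear subspace of $\overline{k}^n$: closure under addition is immediate from composing translations, while closure under scalar multiplication by any $c \in \overline{k}$ follows from applying the polynomial-identity argument above to each $\mbf \in V(\overline{k})$, which yields $F(\xbf + c\mbf) = F(\xbf)$ for every $c \in \overline{k}$. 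Since $V$ is defined by polynomials over $k$, the subspace $V(\overline{k})$ is $\mathrm{Gal}(\overline{k}/k)$-stable, so Galois descent over the perfect field $k$ gives $V(\overline{k}) = V(k) \otimes_k \overline{k}$; hence the nonzero $\overline{k}$-point guaranteed by (1) forces a nonzero $k$-point, which is (2).

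The principal obstacle is the closure of $V(\overline{k})$ under $\overline{k}$-scalar multiplication, which depends squarely on the hypothesis $\deg F < \mathrm{char}\, k$ in positive characteristic; without it, the iteration only produces $F(\xbf + j\mbf) = F(\xbf)$ for $j \in \F_p$, insufficient to force a polynomial identity in $t$. This hypothesis is genuinely needed: for $F(x,y) = x^p + y$ in characteristic $p$, the stabilizer $\{(a, -a^p) : a \in \overline{k}\}$ is closed under addition but not under $\overline{k}$-scaling, and one checks directly that (2) holds (via $\mbf = (1,-1)$) while (3) fails. The perfectness of $k$ enters only through Galois descent, which is automatic in the main application $k = \F_q$.
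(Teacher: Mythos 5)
Your proof is correct. The cycle $(2)\Rightarrow(3)\Rightarrow(4)\Rightarrow(2)$ is essentially the paper's argument: the same upgrade of translation invariance to the polynomial identity $F(\xbf+t\mbf)\equiv F(\xbf)$ in $k[\xbf,t]$ via the hypothesis $\deg F<\ch k$ (or infinitude of $\Z$ in characteristic zero), followed by the same linear-algebra manipulations; your construction of $A$ by extending $\mbf$ to a basis is just a more explicit version of the paper's appeal to transitivity of $\mathrm{GL}_n(k)$ on nonzero vectors. Where you genuinely diverge is $(1)\Rightarrow(2)$. The paper argues concretely: it takes $E=k(m_1,\dots,m_n)$, picks $t\in E$ with $\Tr_{E/k}(t)\neq 0$ (using separability, hence perfectness of $k$), applies each $\sigma\in\Gal(E/k)$ to the polynomial identity to get invariance under $\sigma(t\mbf/m_1)$, and composes these translations to land on the nonzero $k$-rational vector $\Tr_{E/k}(t\mbf/m_1)$. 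You instead observe that the full stabilizer $V(\overline{k})$ is a $\overline{k}$-linear subspace (addition from composing translations, scaling from the polynomial identity in $t$) defined by equations over $k$, hence Galois-stable, and invoke Galois descent for subspaces to produce a nonzero $k$-point. These are two packagings of the same mechanism --- the trace construction is precisely how one exhibits rational vectors in a Galois-stable subspace --- but your version is more conceptual and makes transparent \emph{why} the degree hypothesis is needed (it is what forces the stabilizer to be linear rather than merely a subgroup, as your $x^p+y$ example nicely illustrates), while the paper's version is self-contained and avoids citing descent; for a fully rigorous write-up you would want to note that the descent is carried out over the finite Galois subextension of $\overline{k}/k$ containing the coordinates of the given $\mbf$.
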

\begin{proof}[Proof of Lemma \ref{lemma_equiv_k}]
$(1) \implies (2)$: Let $\zerobf \neq \mbf = (m_1,\ldots, m_n) \in \overline{k}^n$ be such that $F(\xbf) \con F(\xbf + \mbf)$, and assume without loss of generality that $m_1 \neq 0$. By iteration, $F(\xbf + t\mbf) - F(\xbf) \con 0$ as a function of $\xbf$, for all $t \in \Z$, hence for all $t \in k_0$, the prime field inside $k$, if $\ch k>0$. We consider separately the case of characteristic zero: if $\ch k=0$ we directly conclude that $F(\xbf + t\mbf) - F(\xbf)=0$ as a polynomial in $\overline{k}[x_1,\ldots,x_n,t]$ since a nonzero polynomial cannot have infinitely many roots (namely all $t\in\Z$).  In the positive characteristic case, we learn that $t^{\ch k} - t$ divides  $F(\xbf + t\mbf) - F(\xbf)$ as polynomials in $\overline{k}[x_1,\ldots,x_n,t]$.  Under the assumption $\deg F < \ch k$, it therefore must be the case that 
\beq\label{FFq}
F(\xbf + t\mbf) - F(\xbf)=0
\eeq
 as a polynomial in $\overline{k}[x_1,\ldots,x_n,t]$. Now let $E$ be the field generated by $m_1,\ldots, m_n$ over $k$, and choose any $t \in E$ such that $\Tr_{E/k}(t) \in k \setminus \{0\}$;  such a $t$ is guaranteed as long as $E/k$ is separable, which holds because we assumed that $k$ is perfect. (For $\ubf \in E^n$ we will let $\Tr_{E/k}(\ubf) = (\Tr_{E/k}(u_1),\ldots, \Tr_{E/k}(u_n))$.) Then using the fact (\ref{FFq}) and $m_1 \neq 0$, we see that also $\Tr_{E/k}(t\mbf/m_1) \in k^n \setminus\{\zerobf\}$.  
 We will now observe that  $F(\xbf + \Tr_{E/k}(t\mbf/m_1)) \con F(\xbf)$, concluding the proof of (2).
Since $F$ has coefficients in $k$, then for any $\sig \in \Gal(E/k)$, we have $F(\xbf) \con F(\xbf + \sig(t\mbf/m_1))$.  
 Consequently, we have $F(\xbf) \con F(\xbf + \Tr_{E/k}(t\mbf/m_1))$, as desired.

$(2) \implies (3)$: Let $\zerobf \neq \mbf = (m_1,\ldots, m_n) \in k^n$ be such that $F(\xbf ) \con F(\xbf + \mbf)$; then proceeding as in the previous argument, this implies that $F(\xbf+t\mbf) \con F(\xbf)$ as polynomials in $k[x_1,\ldots, x_n,t]$. We will show that there exists $A \in \mathrm{GL}_n(k)$ such that $\mbf = (1,0,\ldots,0) A$. Once we have this matrix, we observe that $F (\xbf A) \con F(\xbf A + \mbf) \con F( (\xbf + (1,0,\ldots, 0))A)$ so that upon defining $G(\xbf) = F(\xbf A)$, we have that $G(\xbf)$ is invariant under translation $\xbf \mapsto \xbf + (1,0,\ldots, 0)$. Consequently, when  regarded as polynomial in $x_1$,
\beq\label{GminusG} 
G(x_1,x_2,\ldots, x_n)  - G(0,x_2,\ldots, x_n)
\eeq
has all integers as its roots, and hence all elements in $k_0$ as its roots if $\mathrm{char}\, k >0$. In the characteristic zero case, this implies that (\ref{GminusG}) is the zero polynomial in $x_1$, and hence $F(\xbf A) = G(\xbf) = G(0,x_2,\ldots, x_n) \in k[x_2,\ldots, x_n]$. If $\mathrm{char}\, k>0$, we learn that $x_1^{\mathrm{char}\, k} - x_1$ divides (\ref{GminusG}), but since $\deg F < \mathrm{char}\, k$, it must be the case that (\ref{GminusG}) is identically the zero polynomial over $k$. Hence as before we have $F(\xbf A) = G(\xbf) = G(0,x_2,\ldots, x_n) \in k[x_2,\ldots, x_n]$, concluding the proof.

Finally, we construct the matrix $A$. Note that $\mathrm{GL}_n(k)$ acts transitively on nonzero vectors in $k^n$, since any such vector is an element in a basis for $k^n$, and there exists a unique element in $\mathrm{GL}_n(k)$ mapping one ordered basis to another. Thus in particular there exists $A \in \mathrm{GL}_n(k)$ such that $\mbf =(1,0,\ldots,0)A$, as desired.

$(3) \implies (4)$: Suppose that $F(\xbf A) \con f(x_2,\ldots, x_n)$ for some $f \in k[x_2,\ldots, x_n]$, so $F(\xbf)\con F( (\xbf A^{-1})A) \con f( (\xbf A^{-1})_2,\ldots, (\xbf A^{-1})_n)$, where $A^{-1} \in \mathrm{GL}_n(k)$. Then it suffices to define $B$ to be the matrix constructed of the last $n-1$ columns of $A^{-1}$.

$(4) \implies (1)$: Now we suppose that there exists such an $n \times (n-1)$ matrix $B$ and $f \in k[x_2,\ldots, x_n]$ such that $F(\xbf) \con f(\xbf B)$. Since multiplication by $B$ is a linear map from $k^n$ to $k^{n-1}$, the nullspace of this map is nontrivial and hence there exists $\zerobf \neq \mbf \in k^n$ such that $\mbf B = \zerobf$; consequently 
\[ F(\xbf) \con f(\xbf B) \con f(\xbf B + \mbf B) \con f((\xbf + \mbf)B) \con F(\xbf + \mbf).\]
This implies (2), which certainly implies (1), concluding the proof of the lemma.
\end{proof}

\subsection{All but finitely many primes}
In the introduction we stated the  fact that forms that are ``admissible'' over $\Z$ are ``admissible'' over $\F_q$ for all but finitely many primes. The formal statement is here:
\begin{lemma}\label{lemma_F_ae_q}
Let $\Del \geq 1$ be fixed. Let $F \in \Z[x_1,\ldots, x_n]$ and suppose $F$ factors as $F=G^\Del H$ with $G,H \in \Z[x_1,\ldots, x_n]$ and $H$ being $\Del$-th power-free over $\Z$. Furthermore, assume that $H$ is nondegenerate over $\Z$, in the sense that there is no $A \in \mathrm{GL}_n(\Z)$ such that $H(\xbf A) \in \Z[x_2,\ldots, x_n]$. Then for all but finitely many primes $q$, $F$ is $(\Del,q)$-admissible.
\end{lemma}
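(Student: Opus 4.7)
The plan is to verify, for all but finitely many primes $q$, both components of the $(\Delta, q)$-admissibility condition for $\bar F$: (I) that the $\Delta$-th power-free factor $h$ in a decomposition $\bar F = g^\Delta h$ may be taken (up to units) to equal $\bar H$; and (II) that this $\bar H$ is nondegenerate over $\F_q$ in the $\mathrm{GL}_n(\F_q)$ sense. Component (I) reduces to verifying that $\bar H$ itself remains $\Delta$-th power-free over $\F_q$: once this is known, the existing expression $\bar F = \bar G^\Delta \bar H$ exhibits $\bar F$ as a $\Delta$-th power times a $\Delta$-th power-free polynomial, and the $\Delta$-th power-free factor in any such decomposition is uniquely determined up to a scalar.

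For (II), the plan is to apply Lemma \ref{lemma_F_equiv} directly to $H$. Since $H$ is nondegenerate over $\Z$, condition (3) of that lemma fails for $H$, so by the equivalence with condition (6), only finitely many primes $q$ can have $\bar H$ invariant under a nontrivial translation in $\overline{\F_q}^n$. For each remaining $q$ satisfying $q > \deg H$ (again a cofinite condition), Lemma \ref{lemma_equiv_k} applied to $\bar H$ over the perfect field $k = \F_q$ (whose characteristic exceeds $\deg \bar H$) converts the failure of its condition (1) into the failure of its condition (3), which is exactly the desired nondegeneracy of $\bar H$ over $\F_q$.

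For (I), factor $H = c H_1^{a_1} \cdots H_\ell^{a_\ell}$ with $c \in \Z$, each $H_i \in \Z[x_1,\ldots,x_n]$ primitive (by Gauss's Lemma) and irreducible in $\Q[x_1,\ldots,x_n]$, pairwise non-associate, and $a_i < \Delta$ (the last bound following from the hypothesis that $H$ is $\Delta$-th power-free over $\Z$). By \cite[Lemma 3.15(2), (3)]{Xu18}, each $H_i$ is square-free over $\overline{\Q}$, and $H_i, H_j$ share no common factor over $\overline{\Q}$ for $i \neq j$. The plan is to transfer both properties to $\overline{\F_q}$ for all but finitely many $q$, establishing (a) each $\bar H_i$ is square-free over $\overline{\F_q}$, and (b) $\bar H_i, \bar H_j$ are coprime over $\overline{\F_q}$ for $i \neq j$. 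Granted (a) and (b), each irreducible factor of $\bar H$ over $\overline{\F_q}$ appears with multiplicity equal to some single $a_i < \Delta$, so $\bar H$ is $\Delta$-th power-free over $\overline{\F_q}$, and hence over $\F_q$ by Lemma \ref{lemma_d_free}.

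The main obstacle is establishing the good-reduction statements (a) and (b); these follow from the non-vanishing of suitable multivariate resultants and discriminants. Concretely, after a generic linear change of coordinates ensuring each $H_i$ has positive degree in some chosen variable, say $x_n$, with unit content in $\Z[x_1,\ldots,x_{n-1}]$, (a) is certified by the non-vanishing of the discriminant of $H_i$ with respect to $x_n$, and (b) by the non-vanishing of $\mathrm{Res}_{x_n}(H_i, H_j)$. By the square-freeness and coprimality over $\overline{\Q}$, both certificates are nonzero elements of $\Z[x_1,\ldots,x_{n-1}]$, and so have nonzero reduction modulo $q$ for all but finitely many $q$, completing the plan.
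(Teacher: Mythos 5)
Your proof is correct, and its overall architecture matches the paper's: both arguments split $(\Del,q)$-admissibility into (I) persistence of the $\Del$-th power-free part under reduction mod $q$ and (II) persistence of nondegeneracy, and your treatment of (II) --- passing from condition (3) of Lemma \ref{lemma_F_equiv} through condition (6), then invoking Lemma \ref{lemma_equiv_k} for the cofinitely many $q$ with $q>\deg H$ --- is exactly what the paper does. The one genuine divergence is in (I): the paper simply cites Lemma 3.22 of \cite{Xu18} for the statement that a $\Del$-th power-free polynomial over $\Z$ stays $\Del$-th power-free over $\F_q$ for almost all $q$, whereas you re-derive it via the nonvanishing mod $q$ of the discriminants $\mathrm{disc}_{x_n}(H_i)$ and resultants $\mathrm{Res}_{x_n}(H_i,H_j)$ after a coordinate change making each $H_i$ have constant leading coefficient in $x_n$. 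That argument is sound and self-contained, and your remark that the $\Del$-th power-free factor of $f$ is unique up to scalars (so that $h$ may indeed be taken to be the reduction of $H$) makes explicit a point the paper leaves implicit. If you write this up in full, two routine points deserve a sentence each: the auxiliary linear change of coordinates must remain invertible mod $q$ (exclude the finitely many $q$ dividing a suitable determinant, and note that $\Del$-th power-freeness over $\ov{\F}_q$ is preserved under such changes), and it is the nonvanishing mod $q$ of the constant leading coefficients of the $\bar H_i$ in $x_n$ that rules out repeated or common factors independent of $x_n$, which the discriminant and resultant by themselves do not detect.
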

\begin{proof}
For a fixed prime $q$, we reduce $F,G,H$ modulo $q$ to $f,g,h \in \F_q[x_1,\ldots, x_n]$.  
By Lemma 3.22 of [Xu18], since $H$ is $\Del$-th power-free over $\Z$ then for all but finitely many primes $q$,  $h$ is $\Delta$-th power-free over $\F_q$.  Letting $Q_1$ denote this finite set of exceptional primes, then for all $q \not\in Q_1$, we have that $f = g^\Del h$  with $h$ being $\Del$-th power-free over $\F_q$. 
As a consequence of Lemma \ref{lemma_F_equiv} part (6), since $H$ is nondegenerate over $\Z$, the reduction $h$ of $H$ modulo $q$ can be invariant under a nontrivial translation in $\overline{\F}_q^n$ only for finitely many primes $q$; we will call this exceptional set $Q_2$. Finally, let $Q_3$ denote the primes $q \leq \deg F$. We now proceed to consider the primes $q \notin Q_1 \union Q_2 \union Q_3;$ for such primes, $h$ is not invariant under any nontrivial translation in $\F_q^n$, and (3) in Lemma \ref{lemma_equiv_k} shows that $h$ cannot be made independent of any indeterminate by a linear change of variables in $\mathrm{GL}_n(\F_q)$. This proves the lemma.
(Here we ruled out the primes in $Q_3$ because we cite Lemma \ref{lemma_equiv_k}, but we note that the specific implications of this lemma that we employ here do not need the assumption $q > \deg F$.)
\end{proof}

\subsection{Permutations of variables}\label{sec_perm}
Within the Burgess method, we will use a variant of the Menchov-Rademacher method for deducing bounds for maximal moments from non-maximal moments. To carry out this argument in our setting, we will need to re-order the variables $x_1,x_2,\ldots, x_n$ in $F(\xbf)$ so that a corresponding tuple of parameters $(k_1,\ldots, k_n)$ satisfies the ordering $k_1 \leq k_2 \leq \cdots \leq k_n$. Thus we are led to consider forms resulting from $F$ when the variables are permuted.   For any permutation $\pi$ of $\{1,\ldots, n\}$, define the form $F_\pi(\Xbf)$ from the form $F(\Xbf)$ by setting
$F_\pi(X_1,\ldots, X_n) = F(X_{\pi(1)},\ldots, X_{\pi(n)}).$  
\begin{lemma}\label{lemma_perm}
Let $\Del \geq 1$ and a prime $q$ be fixed. Then given a $(\Del,q)$-admissible  form $F$, the form $F_\pi$ is $(\Del,q)$-admissible for all permutations $\pi$ on a set of $n$ elements.
\end{lemma}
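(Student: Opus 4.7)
The plan is to reduce the verification of admissibility for $F_\pi$ to the three clauses of Condition \ref{cond} applied to $F$, by observing that a permutation of variables is a particular instance of a linear change of variables by a matrix in $\mathrm{GL}_n(\Z) \subset \mathrm{GL}_n(\F_q)$. Concretely, I would write $F_\pi(\xbf) = F(\xbf P_\pi)$, where $P_\pi$ is the $n \times n$ permutation matrix with $(P_\pi)_{ij} = 1$ iff $i = \pi(j)$, so that $P_\pi \in \mathrm{GL}_n(\Z)$. Reducing modulo $q$ commutes with this polynomial substitution, so the reduction of $F_\pi$ modulo $q$ is $f_\pi(\xbf) = f(\xbf P_\pi)$, where $f$ is the reduction of $F$ modulo $q$.

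Next I would transport the admissible factorization $f = g^\Delta h$ (with $h$ being $\Delta$-th power-free over $\F_q$) through this substitution: setting $g_\pi(\xbf) := g(\xbf P_\pi)$ and $h_\pi(\xbf) := h(\xbf P_\pi)$, we obtain $f_\pi = g_\pi^\Delta h_\pi$. I would then note that because $P_\pi$ is invertible, the map $\xbf \mapsto \xbf P_\pi$ induces an $\F_q$-algebra automorphism of $\F_q[x_1,\ldots,x_n]$, which sends irreducibles to irreducibles and preserves multiplicities and pairwise non-association of the irreducible factors. Consequently $h_\pi$ is $\Delta$-th power-free over $\F_q$ precisely because $h$ is. This verifies the first part of Condition \ref{cond} for $F_\pi$.

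Finally, for the non-degeneracy clause, I would argue by contradiction: suppose there exists $A \in \mathrm{GL}_n(\F_q)$ such that $h_\pi(\xbf A) \in \F_q[x_2,\ldots,x_n]$. Then
\[
h_\pi(\xbf A) = h(\xbf A P_\pi),
\]
and $AP_\pi \in \mathrm{GL}_n(\F_q)$ as a product of two elements of $\mathrm{GL}_n(\F_q)$. This contradicts the $(\Delta,q)$-admissibility of $F$, which asserts that no such matrix exists for $h$. Therefore $F_\pi$ is $(\Delta,q)$-admissible.

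There is no real obstacle in this proof; the only point requiring care is a consistent bookkeeping convention for how the permutation matrix acts on row vectors of indeterminates, so that the composition $AP_\pi$ genuinely lies in $\mathrm{GL}_n(\F_q)$ and realizes the claimed linear change of variables on $h$.
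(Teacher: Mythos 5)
Your proposal is correct and follows essentially the same route as the paper's (much terser) proof: transport the factorization $f=g^\Delta h$ through the variable permutation and observe that the nondegeneracy clause is preserved because composing with the permutation matrix stays inside $\mathrm{GL}_n(\F_q)$. Your version simply makes explicit the permutation matrix $P_\pi$ and the fact that the induced algebra automorphism preserves irreducibility, multiplicities, and non-association, which the paper leaves implicit.
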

\begin{proof}
Letting $f$ denote the reduction of $F$ modulo $q$, we write $f=g^\Del h$ with $h$ being $\Del$-th power-free over $\F_q$; then correspondingly for the permuted versions, if $f_\pi$ denotes the reduction of $F_\pi$ modulo $q$ then $f_\pi = (g_\pi)^\Del h_\pi$ with $h_\pi$ being $\Del$-th power-free over $\F_q$. Moreover, $h$ can be made independent of at least one variable after a $\mathrm{GL}_n(\F_q)$ change of variable if and only if $h_\pi$ can.
\end{proof}

\section{The stratification of complete character sums}\label{sec_Xu}
In this section we recall the stratification of complete character sums proved by the second author in \cite{Xu18}  and show how to deduce a slightly stronger formulation that we believe will be of independent interest, as well as being useful in this paper. In our presentation, we will replace the dimension $r$ in the original work by the dimension $2r$ in each instance, but the content of this section would apply in an analogous way   for any dimension $r$ (odd or even).
 
\subsection{The stratification obtained by Xu}
 We first recall the statement of \cite[Theorem 1.1 and Corollary 1.8]{Xu18} in our setting of dimension $2r$. For each fixed $n,r \geq 1$, we define a set of parameters $\theta_j$ for $1 \leq j \leq n$, as follows:
\beq\label{theta_j_dfn}
 \theta_j = \begin{cases}
 	0 &j=0 \\
	 j\lfloor (r-1)/(n-1) \rfloor, &j=1, \ldots, n-2\\
	  r-1, &j=n-1, \\
 	nr, &j=n.
\end{cases}
\eeq
Note that this differs superficially from the definition of $\theta_j$ in \cite[p. 2]{Xu18}: we are working with dimension $2r$ in place of $r$ and the floor function  results in slightly different formulas. (Precisely,  from Xu's work  we may take 
\beq\label{theta1_dfn}
 \theta_1=\lfloor (2r-1)/(2n-2) \rfloor = \lfloor (r-1)/(n-1) \rfloor
\eeq
and then we set $\theta_j = j\theta_1$ for $1 \leq j \leq n-1$. In particular for  $j=n-1$, 
\[ \theta_{n-1} = \lfloor (2r-1)/2\rfloor = r-1.\]
In fact, Xu's original theorem allows a slightly larger value of $\theta_j \geq j \theta_1$; it is later apparent in (\ref{sum_j}) that this slightly larger choice, leading to a slightly stronger version of Theorem \ref{thm_Xu_original}, would not significantly improve our current application.)

 \begin{letterthm}[{Theorem 1.1 of \cite{Xu18}}]\label{thm_Xu_original}
 Let integers $n,r,\Del,D \geq 1$  be fixed.  There exist integers $C=C(n,r,D) \geq 1$ and $C' = C'(n,r,\Del,D) \geq 1$  and a finite set $\Scal = \Scal(n, r, \Del,D)$ of primes  such that the following holds. 
 
 Let $\kappa$ denote a finite field with algebraic closure denoted by $\overline{\kappa}$. For each $1 \leq i \leq 2r$, let $\chi_i: \kappa^\times \maps \C^\times$ be a multiplicative character (extended to $\kappa$ by setting $\chi_i(0)=0)$ and assume that $d_i := \ord(\chi_i) | \Del >0$. Let $F_i \in \kappa(x_1,\ldots, x_n)$ be a $d_i$-th power-free rational function of degree at most $D$ and assume that 
 \beq\label{TFi_dfn}
  T_{F_i} := \{ \mbf \in \overline{\kappa}^n : F_i (\xbf) \con F_i(\xbf+\mbf) \} 
  \eeq
 is finite for each $1 \leq i \leq 2r$. Then upon defining
 \beq\label{S_thm_dfn}
  S(\xbf^{(1)}, \ldots, \xbf^{(2r)}) := \sum_{\mbf \in \kappa^n} \prod_{i=1}^{2r} \chi_i (F_i(\mbf+\xbf^{(i)})),
  \eeq
 we have that whenever $\mathrm{char}\, \kappa \not\in \Scal$, there exist subschemes 
 \[ \mathbb{A}_\kappa^{2nr} = X_0 \supset X_1  \supset X_2 \supset \cdots \supset X_n \]
 such that for each $ 1 \leq j \leq n$,
 \begin{enumerate}
 \item the sum of the degrees of irreducible components of $X_j$ is at most $C'$;
 \item $\dim X_j \leq 2nr - \theta_j$ with $\theta_j$ defined as in (\ref{theta_j_dfn});
\item  for all $(\xbf^{(1)}, \ldots, \xbf^{(2r)}) \in \mathbb{A}^{2nr}(\kappa) \setminus X_j (\kappa),$ 
 \[ |S(\xbf^{(1)}, \ldots, \xbf^{(2r)}) | \leq C (\# \kappa)^{(n+j-1)/2}. \]
 \end{enumerate}
\end{letterthm}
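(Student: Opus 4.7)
The plan is to interpret $S(\xbf\up1, \ldots, \xbf\up{2r})$ as the trace of Frobenius on $\ell$-adic cohomology: letting $\Lscr_{\{\xbf\}} := \bigotimes_{i=1}^{2r} t_{\xbf\up i}^{*} F_i^{*} \Lscr_{\chi_i}$ be the Kummer-type lisse sheaf on an open dense subset of $\Abb^n_\kappa$, Grothendieck's trace formula together with Deligne's Weil~II estimates yield
\[ |S(\xbf\up1, \ldots, \xbf\up{2r})| \leq \sum_{k=0}^{2n} h^k_c(\Lscr_{\{\xbf\}})\,(\#\kappa)^{k/2}. \]
Thus it suffices to show that outside a subscheme $X_j \subset \Abb^{2nr}$ of codimension at least $\theta_j$ and bounded total degree, the compactly supported cohomology $H^k_c(\Lscr_{\{\xbf\}})$ vanishes for all $k > n+j-1$, while the surviving Betti numbers are uniformly bounded by a constant $C = C(n,r,D)$ (a standard consequence of Katz's general estimates once the ramification data at infinity is controlled uniformly in $\xbf$).

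I would first reduce to a common character order $\Del$ by writing $\chi_i = \chi^{a_i}$ for a fixed character $\chi$ of order $\Del$. Then degeneracies of $\Lscr_{\{\xbf\}}$ are governed by when the rational function $\Phi_{\{\xbf\}}(\Xbf) := \prod_i F_i(\Xbf+\xbf\up i)^{a_i}$ becomes a nontrivial $\Del$-th power, either globally (forcing $\Lscr_{\{\xbf\}}$ to be geometrically trivial and landing $\xbf$ in $X_n$) or after restriction to a proper subvariety (placing $\xbf$ in some intermediate $X_j$). For $X_n$: the $d_i$-th power-freeness of each $F_i$ together with the finiteness of each $T_{F_i}$ forces the matching of $(n-1)$-dimensional zero-components of the $F_i(\Xbf+\xbf\up i)$ to occur only through coincidences among the translates $\xbf\up i$, and counting these coincidences yields $\codim X_n \geq nr$.

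For the intermediate strata, I would study when $\Lscr_{\{\xbf\}}$ fails to be geometrically irreducible after restriction to a generic affine subspace of $\Abb^n_\kappa$: each such failure supplies extra cohomology in an intermediate degree, and corresponds precisely to $\Phi_{\{\xbf\}}$ becoming a $\Del$-th power after the restriction (equivalently, independent of the complementary coordinates up to $\Del$-th powers). A combinatorial count of how many algebraic relations on the $\xbf\up i$ are needed to produce such a partial degeneracy yields $\theta_j = j\lfloor(r-1)/(n-1)\rfloor$ for $1 \leq j \leq n-2$: the individual non-degeneracy of each $F_i$ means that a degeneracy can only come from coincidences among the translates, and producing one such $(n-1)$-dimensional degeneracy direction uses up roughly $(n-1)$ of the $2r$ translates, giving the ratio $(r-1)/(n-1)$. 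The jumps to $\theta_{n-1} = r-1$ and $\theta_n = nr$ reflect boundary cases of this count.

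The main obstacle is the algebraic-geometric bookkeeping underlying each dimension estimate: one must produce explicit defining equations for each $X_j$ of degree bounded in $n, r, D, \Del$ so that $C'$ is controlled, and handle the non-equidimensional case in which distinct irreducible components of $X_j$ arise from distinct degeneracy mechanisms. This is more delicate than the stratifications of \cite{Fou00, Lau00, FouKat01}, which treat simpler classes of sheaves. The exceptional set $\Scal$ must include the primes dividing $\Del$, the primes $\leq D$ so that Lemma \ref{lemma_equiv_k} applies and no inseparability-induced translation symmetries interfere, and the finitely many primes where the Kummer sheaves develop wild ramification at infinity; with $\Scal$ depending only on $n, r, \Del, D$, the stratification is uniform across all characteristics outside $\Scal$.
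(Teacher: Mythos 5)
First, note that this theorem is not proved in the present paper at all: it is quoted verbatim as Theorem~1.1 of \cite{Xu18}, and the only account of its proof given here is the outline in \S\ref{sec_Xu} (ingredients (I)--(IV)). That proof is a moment-identity plus bootstrapping argument: one equates the $2s$-th moment of $S$ over $\kappa^{2nr}$ with a multilinear average of sums $T_i$ of the same shape, feeds in a weak initial saving (the Weil bound (IV) off a square-root-sized exceptional set (III)), converts the resulting moment bound into codimension bounds on the $X_j$ via (II), and then iterates, applying the improved bound for $S$ back to the $T_i$. Your proposal takes a genuinely different route — a direct $\ell$-adic stratification in the style of \cite{Fou00,FouKat01}, bounding $S$ by Betti numbers via Grothendieck--Deligne and then locating the $\xbf$ for which intermediate compactly supported cohomology survives.

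The difficulty is that the core of your argument — the codimension bounds for the intermediate strata — is asserted rather than derived, and it is precisely the step where the direct cohomological method is known to fail here. As \S\ref{sec_heuristic} of the paper points out, the sheaf $\Lscr_{\{\xbf\}}$ is attached to a rational function whose leading form defines a highly singular projective variety, outside the reach of \cite{Kat02} and of the singular extensions in \cite{Roj05,Roj06}; there is no uniform vanishing theorem that converts ``$\Phi_{\{\xbf\}}$ is not a $\Del$-th power after restriction to a generic affine subspace'' into ``$H^k_c=0$ for $k>n+j-1$,'' nor a semicontinuity argument producing closed subschemes $X_j$ of bounded degree from such a criterion. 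Moreover, your ``combinatorial count'' is reverse-engineered: the exact values $\theta_j=j\lfloor(r-1)/(n-1)\rfloor$ for $j\le n-2$, with the jumps to $r-1$ at $j=n-1$ and $nr$ at $j=n$, are artifacts of the bootstrapping recursion in \cite{Xu18} (each stage of codimension is purchased by one application of the moment identity with a fresh parameter $s$), not of a dimension count of degeneracy loci; a direct count of the type you describe gives $\codim X_n\geq nr$ (matching translates pairwise, cf.\ Remark~\ref{remark_optimal}) but offers no mechanism for the intermediate $j$. Finally, the bound $C'$ on the total degree of the $X_j$ — which you correctly flag as ``the main obstacle'' — is left entirely open in your sketch, whereas in \cite{Xu18} it comes out of the same moment machinery. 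As it stands, the proposal is a plausible program in the Fouvry--Katz spirit, but it does not constitute a proof of the stated stratification.
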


\begin{remark}
Note that while the subschemes $X_j$ may depend on the $F_i$, the parameters $C, C', \theta_j$ depend only on the maximum degree $D$ of the $F_i$. 
The constant $C=C(n,r,D)$ noted above has the value $(4r(D+1)+1)^n$ as computed in \cite[Thm. 11]{Kat01} (see \cite[p. 3]{Xu18}, again recalling that we have $2r$ in place of $r$). The constant $C'$ (and later $C''$) only has dependence on $n, r, D, \Del$ but is not explicitly determined; see \cite[p. 3]{Xu18} and the discussion in Corollary \ref{cor_S_empty}.

 Note also that we did not explicitly exclude cases in which for some $1 \leq i \leq 2r$, $\chi_i$ is the principal character. However,  if $\chi_i$ is principal then $d_i = 1$ and hence $F_i \in \kappa(x_1,\ldots, x_n)$ must be a constant function. This implies that $T_{F_i}$ is infinite, thus excluding this possibility from the theorem. This remark applies to Corollary \ref{cor_Xu_original}, below, as well.
\end{remark}

Next, we must convert Theorem \ref{thm_Xu_original} into a count for the number of points $(\xbf^{(1)}, \ldots, \xbf^{(2r)}) \in \mathbb{A}^{2nr}(\kappa) $ (later denoted as collections $\{ \xbf^{(1)}, \ldots, \xbf^{(2r)} \}$) such that a given upper bound for $|S(\xbf^{(1)}, \ldots, \xbf^{(2r)})|$ holds. It  is convenient to define for any sequence $1 \leq k_1 \leq k_2 \leq \cdots \leq k_n<\infty$  the function
 \beq\label{Bnrj_dfn}
 B_{n,r,j}(\kbf) =  B_{n,r,j}(k_1, \ldots, k_n) 
 	=\begin{cases}
 1 & j=0 \\
 k_1^{\theta_j} = k_1^{j\lfloor \frac{r-1}{n-1} \rfloor}  & j=1,\ldots, n-2 \\
 k_1^{\theta_{n-1}}= k_1^{r-1}  & j=n-1 \\
  (k_1 \cdots k_{n/2})^{2r}  & j=n,  \text{$n$ even} \\
    (k_1 \cdots k_{(n-1)/2})^{2r}k_{(n+1)/2}^{r}  & j=n, \text{$n$ odd} . 
 \end{cases}
 \eeq
 (For readers with \cite{Xu18} in hand, to aid comparison to Xu's original notation involving parameters denoted by $n_0,\eta$, we make the simple observation that for $j=n$ and $\theta_j=nr$,  if $n$ is even we  write $nr = (n/2) 2r$ (with $n_0 = n/2$ and $\eta=0$) and when $n$ is odd we write $nr = ((n-1)/2) \cdot 2r + r$ (with $n_0 = (n-1)/2$ and $\eta=r$), so that Xu's original notation   results in the expression given for $B_{n,r,j}$ in the case $j=n$, and with ambient dimension $2r$.)

\begin{lettercor}[{Corollary 1.8 of \cite{Xu18}}]\label{cor_Xu_original}
 Let integers $n,r,\Del,D \geq 1$  be fixed.  There exist integers $C=C(n,r,D) \geq 1$ and $C' = C'(n,r,\Del,D) \geq 1$  and a finite set $\Scal = \Scal(n, r, \Del,D)$ of primes  such that the following holds. 
 
  Let $\kappa$ denote a finite field with algebraic closure denoted by $\overline{\kappa}$. 
 For each $1 \leq i \leq 2r$, let $\chi_i: \kappa^\times \maps \C^\times$ be a   multiplicative character (extended to $\kappa$ by setting $\chi_i(0)=0)$ and assume that $d_i := \ord(\chi_i) | \Del >0$. Let $F_i \in \kappa(x_1,\ldots, x_n)$ be a $d_i$-th power-free rational function of degree at most $D$ and assume that
$ T_{F_i} $ as defined in (\ref{TFi_dfn})
 is finite for each $1 \leq i \leq 2r$. 
 
 Let $\{ B_i \}_{i=1}^n$ be subsets of $\kappa$ such that $1 \leq |B_1| \leq |B_2| \leq \cdots \leq |B_n| < \infty$, and define $B = \prod_{i=1}^n B_i \subset \kappa^n$. Then whenever $\mathrm{char}\, \kappa \notin \Scal$, for each $1 \leq j \leq n$,
\beq\label{cor_B_bound}
 \# \{ (\xbf^{(1)}, \ldots, \xbf^{(2r)}) \in B^{2r} : |S(\xbf^{(1)}, \ldots, \xbf^{(2r)})| > C (\# \kappa)^{(n+j-1)/2} \} \leq C' \|B\|^{2r} B_{n,r,j}(|B_1|, \ldots, |B_n|)^{-1}.
\eeq
\end{lettercor}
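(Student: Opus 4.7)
My plan is to feed Theorem~\ref{thm_Xu_original} into the box-counting bound~(\ref{IXB}) (from \cite[Lemma 1.7]{Xu18}), and then match up the codimension produced by the stratification against the side lengths of the box $B^{2r} \subset \Abb_\kappa^{2nr}$. Concretely, Theorem~\ref{thm_Xu_original} guarantees that any tuple $(\xbf^{(1)},\ldots,\xbf^{(2r)})$ with $|S(\xbf^{(1)},\ldots,\xbf^{(2r)})| > C(\#\kappa)^{(n+j-1)/2}$ lies in $X_j(\kappa)$, where $X_j$ is the $j$-th subscheme of the stratification; hence the left-hand side of~(\ref{cor_B_bound}) is at most $I_{X_j}(B^{2r})$ in the notation of~(\ref{IXB_dfn}). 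Conclusions~(1) and~(2) of the theorem tell me that the sum of degrees of the irreducible components of $X_j$ is at most $C'$ and that $\codim X_j \geq \theta_j$, so I may apply~(\ref{IXB}) with ambient dimension $R=2nr$, codimension $\varpi = \theta_j$, and degree bound $U \leq C'$.

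The main thing to get right is the matching between $\theta_j$ and the side lengths of $B^{2r}$. The box $B^{2r}$ has $2nr$ sides comprising $2r$ copies of each $|B_i|$; under the hypothesis $|B_1|\leq\cdots\leq|B_n|$, after permuting the coordinates of $\Abb^{2nr}$ (which leaves the degrees and codimension of $X_j$ unchanged), the sorted side lengths form $n$ consecutive blocks of length $2r$ with values $|B_1|,|B_2|,\ldots,|B_n|$. Plugging this into~(\ref{IXB}) yields
\[ I_{X_j}(B^{2r}) \;\leq\; C' \,\|B\|^{2r}\, P_{\theta_j}, \]
where $P_\varpi$ denotes the product of reciprocals of the $\varpi$ smallest sorted sides.

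Finally, I would verify $P_{\theta_j} = B_{n,r,j}(|B_1|,\ldots,|B_n|)^{-1}$ case by case, by comparing $\theta_j$ with blocks of length $2r$. For $j=1,\ldots,n-1$, the inequality $\theta_j \leq r-1 < 2r$ places all $\theta_j$ smallest sides inside the first block, so $P_{\theta_j} = |B_1|^{-\theta_j}$, matching~(\ref{Bnrj_dfn}). For $j=n$, the identity $\theta_n = nr = (n/2)(2r)$ (when $n$ is even) or $((n-1)/2)(2r) + r$ (when $n$ is odd) fills the first $n/2$ (resp.\ $(n-1)/2$) blocks entirely, plus a half-block of $r$ of the copies of $|B_{(n+1)/2}|$ in the odd case, reproducing the two expressions in~(\ref{Bnrj_dfn}) for $j=n$. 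No real obstacle arises beyond this bookkeeping; the content has already been done by the stratification in Theorem~\ref{thm_Xu_original} and the box bound~(\ref{IXB}).
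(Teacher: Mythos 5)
Your proposal is correct and is exactly the deduction the paper intends: the bad tuples lie in $X_j(\kappa)$ by Theorem~\ref{thm_Xu_original}, and the point count in the box $B^{2r}$ follows from the codimension/degree bound (\ref{IXB}) of \cite[Lemma 1.7]{Xu18}, with the bookkeeping $\theta_j\le r-1<2r$ for $j\le n-1$ and $\theta_n=nr$ filling whole (and, for odd $n$, one half) blocks of $2r$ sorted sides, which is precisely how the paper explains the shape of $B_{n,r,j}$ in (\ref{Bnrj_dfn}). The paper itself quotes this corollary from \cite{Xu18} rather than reproving it, but your argument matches the mechanism it describes in \S\ref{sec_heuristic} and reuses for Corollary~\ref{thm_F_power_cor}.
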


\subsection{The stratification in our setting}
Now we state the new versions of Theorem \ref{thm_Xu_original} and Corollary \ref{cor_Xu_original} that we use in this paper.  
\begin{thm}\label{thm_F_power}
Let $\kappa = \F_q$, $q$ prime. The result of Theorem \ref{thm_Xu_original} holds if we replace the hypothesis that for each $1 \leq i \leq 2r$, the form $F_i$ is $d_i$-th power-free and $T_{F_i}$ is finite,  by  the weaker hypothesis that for each $1 \leq i \leq 2r$,  the form $F_i$ is $(d_i,q)$-admissible and that for each $1 \leq i \leq 2r$, $\deg F_i < q$.
\end{thm}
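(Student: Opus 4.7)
The plan is to reduce Theorem~\ref{thm_F_power} to the original Theorem~\ref{thm_Xu_original} by replacing each $F_i$ with the $d_i$-th power-free part of its reduction modulo $q$. First, using $(d_i,q)$-admissibility, I would factor $f_i := F_i \bmod q$ as $f_i = g_i^{d_i} h_i$ with $h_i \in \F_q[x_1,\ldots,x_n]$ being $d_i$-th power-free and such that no $\mathrm{GL}_n(\F_q)$ change of coordinates makes $h_i$ independent of any variable. Because $\deg h_i \le \deg F_i < q$, Lemma~\ref{lemma_equiv_k} applies to $h_i$ over $\F_q$ and the equivalence of (1) and (3) forces $T_{h_i} = \{0\}$, which is (trivially) finite. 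So the tuple $(h_i)_{i=1}^{2r}$ satisfies the hypotheses of Theorem~\ref{thm_Xu_original}, producing subschemes $Y_1 \supseteq \cdots \supseteq Y_n \subseteq \mathbb{A}^{2nr}_{\F_q}$ with $\dim Y_j \le 2nr - \theta_j$ and $|\widetilde{S}(\xbf)| \le Cq^{(n+j-1)/2}$ for $\xbf \notin Y_j(\F_q)$, where $\widetilde{S}(\xbf) := \sum_{\mbf} \prod_i \chi_i(h_i(\mbf + \xbf^{(i)}))$.

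Next I would relate $S(\xbf)$ to $\widetilde{S}(\xbf)$ using that $\chi_i^{d_i}$ is the principal character on $\F_q^\times$ (extended by zero at $0$), which yields the pointwise identity $\chi_i(f_i(y)) = \chi_i(h_i(y))\,\mathbf{1}_{g_i(y)\neq 0}$. Expanding $\prod_i \mathbf{1}_{g_i\neq 0} = \sum_S (-1)^{|S|} \prod_{i\in S} \mathbf{1}_{g_i = 0}$ by inclusion-exclusion and summing over $\mbf$ yields
\[
S(\xbf) = \sum_{S \subseteq \{1,\ldots,2r\}} (-1)^{|S|}\, T_S(\xbf),\qquad T_S(\xbf) := \sum_{\mbf\in V_S(\xbf)} \prod_{i=1}^{2r} \chi_i(h_i(\mbf + \xbf^{(i)})),
\]
where $V_S(\xbf) := \bigcap_{i\in S}\{\mbf\in\F_q^n: g_i(\mbf + \xbf^{(i)}) = 0\}$. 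The $S = \emptyset$ contribution is $\widetilde{S}(\xbf)$, already controlled by $Y_j$.

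For each nonempty $S$, the sum $T_S(\xbf)$ is a character sum on the subvariety $V_S(\xbf) \subset \mathbb{A}^n_{\F_q}$ of (generic) dimension $n - |S|$. I would show $|T_S(\xbf)| \ll q^{(n-|S|)/2}$ at $\xbf$ outside a subscheme $W_{S,j} \subseteq \mathbb{A}^{2nr}_{\F_q}$ of codimension at least $\theta_j$, by applying the same $\ell$-adic cohomological machinery underlying \cite{Xu18}: the restriction of the product sheaf $\bigotimes_i \Lscr_{\chi_i}(h_i(x + \xbf^{(i)}))$ to $V_S(\xbf)$ inherits nondegeneracy from that of the $h_i$'s via the open immersion $\{g_i \neq 0\} \hookrightarrow \mathbb{A}^n$, after which Deligne's Weil~II bounds furnish the required square-root cancellation; the codimension of $W_{S,j}$ is obtained by repeating Xu's stratification computations on each cut-out subvariety $V_S(\xbf)$ in place of $\mathbb{A}^n$. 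Setting $X_j := Y_j \cup \bigcup_{\emptyset \neq S} W_{S,j}$ produces a nested family with $\dim X_j \le 2nr - \theta_j$, and for $\xbf \notin X_j(\F_q)$ one concludes
\[
|S(\xbf)| \le |\widetilde{S}(\xbf)| + \sum_{|S|\ge 1} |T_S(\xbf)| \ll q^{(n+j-1)/2} + q^{(n-1)/2} \ll q^{(n+j-1)/2}.
\]

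The hardest step will be the subvariety analysis for nonempty $S$. The trivial bound $|T_S(\xbf)| \le |V_S(\xbf)| \ll q^{n-1}$ is insufficient when $j \le n - 2$, where the target $q^{(n+j-1)/2}$ is strictly smaller than $q^{n-1}$, so genuine cancellation on the subvarieties $V_S(\xbf)$ is essential rather than just a size bound. Carrying this out requires adapting the $\ell$-adic stratification of \cite{Xu18} to character sums cut out by the hypersurfaces $\{g_i(\mbf + \xbf^{(i)}) = 0\}$, with the nondegeneracy of $h_i$ replacing nondegeneracy of $F_i$ at each step of the codimension-counting argument that bounds the bad set $W_{S,j}$ in $\xbf$-space.
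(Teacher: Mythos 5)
There is a genuine gap, and it sits exactly where you flag it: the claimed bound $|T_S(\xbf)| \ll q^{(n-|S|)/2}$ for nonempty $S$, off a bad locus $W_{S,j}$ of codimension $\geq \theta_j$, is not proved and is not a routine adaptation of \cite{Xu18}. The sets $V_S(\xbf) = \bigcap_{i \in S}\{g_i(\mbf+\xbf^{(i)})=0\}$ are in general highly singular (possibly reducible, non-reduced, or of excess dimension) intersections of hypersurfaces, and obtaining square-root cancellation for multiplicative character sums on such varieties, \emph{uniformly} in $\xbf$ and with a quantitative stratification of the exceptional $\xbf$'s, is a problem at least as hard as Theorem \ref{thm_Xu_original} itself --- indeed it is precisely the ``Weil bounds on singular varieties'' obstruction that \S\ref{sec_heuristic} identifies as the critical barrier in this subject. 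The remark about inheriting nondegeneracy ``via the open immersion $\{g_i\neq 0\}\hookrightarrow \mathbb{A}^n$'' does not help: $V_S(\xbf)$ is the closed complement of that open set, and Xu's codimension computations rest on translation-invariance arguments on all of $\mathbb{A}^n$ that have no evident analogue on these cut-out subvarieties. As you yourself note, the trivial bound $|T_S(\xbf)|\ll q^{n-1}$ only suffices for $j \ge n-1$, so for $j \le n-2$ the theorem genuinely depends on this unproved input.

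The decomposition is also unnecessary. The paper's proof never modifies the character sum $S(\xbf^{(1)},\ldots,\xbf^{(2r)})$ at all: it observes that of the four ingredients in Xu's proof, the power-free/finite-$T_{F_i}$ hypothesis enters only in ingredient (III), the count of tuples $\{\mbf\}$ for which $F_{i,\{\mbf\}}$ is a perfect $d_i$-th power; the Weil-bound ingredient (IV) already applies to arbitrary $F_i$ whenever $F_{i,\{\mbf\}}$ fails to be a perfect power. Writing $F_i = G_i^{d_i}\widetilde{F}_i$, the product $\prod_j F_i(\xbf+\mbf^{(j)})^{a_j}$ is a perfect $d_i$-th power if and only if $\prod_j \widetilde{F}_i(\xbf+\mbf^{(j)})^{a_j}$ is, so the exceptional count for $F_i$ equals that for $\widetilde{F}_i$, and Lemma \ref{count} applies to $\widetilde{F}_i$ once admissibility together with $\deg F_i<q$ gives $T_{\widetilde{F}_i}$ finite via Lemma \ref{lemma_equiv_k} (this last step is the one piece of your argument that matches the paper's). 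No inclusion--exclusion over the zero sets of the $G_i$, and no character sums on subvarieties, are ever needed.
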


\begin{cor}\label{thm_F_power_cor} 
Let $\kappa = \F_q$, $q$ prime. The result of Corollary \ref{cor_Xu_original} holds if we replace the hypothesis that for each $1 \leq i \leq 2r$, the form $F_i$ is $d_i$-th power-free and $T_{F_i}$ is finite,  by  the weaker hypothesis that for each $1 \leq i \leq 2r$,  the form $F_i$ is $(d_i,q)$-admissible.
\end{cor}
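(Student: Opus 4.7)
The strategy is to derive the corollary from Theorem \ref{thm_F_power} in exactly the same manner that Corollary \ref{cor_Xu_original} is derived from Theorem \ref{thm_Xu_original} in \cite{Xu18}: no fresh geometric input is needed, only the box-counting lemma. Concretely, I would first invoke Theorem \ref{thm_F_power}, whose hypotheses are already phrased in terms of $(d_i,q)$-admissibility, to produce for each $q\notin\Scal$ a chain of subschemes $\mathbb{A}^{2nr}_{\F_q} = X_0 \supset X_1 \supset \cdots \supset X_n$ with $\dim X_j \leq 2nr - \theta_j$, with sum of degrees of irreducible components at most $C'$, and with $|S(\xbf^{(1)},\ldots,\xbf^{(2r)})| \leq Cq^{(n+j-1)/2}$ for all $\F_q$-points outside $X_j$. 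Any tuple $(\xbf^{(1)},\ldots,\xbf^{(2r)})\in B^{2r}$ violating the bound in (\ref{cor_B_bound}) must then lie in $X_j(\F_q)\cap B^{2r}$, so it suffices to estimate the size of this intersection.

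For that counting step I would invoke \cite[Lemma 1.7]{Xu18}, which bounds $|X(\F_q)\cap B'|$ for a subscheme $X\subset\mathbb{A}^R$ of codimension $\varpi$ (with irreducible components of total degree at most $U$) and a box $B'=\prod_i B_i'$ whose sides are ordered $|B_1'|\leq\cdots\leq|B_R'|$, by $U\,\|B'\|\,(|B_1'|\cdots|B_\varpi'|)^{-1}$. Here $R=2nr$ and $\varpi = \theta_j$. Arranging the sides of $B^{2r}$ in non-decreasing order yields the sequence $|B_1|,\ldots,|B_1|,|B_2|,\ldots,|B_2|,\ldots,|B_n|,\ldots,|B_n|$ with each $|B_i|$ appearing with multiplicity $2r$. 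For $1\leq j\leq n-1$, one has $\theta_j \leq \theta_{n-1} = r-1 < 2r$, so the $\theta_j$ smallest sides are all copies of $|B_1|$, producing a saving of $|B_1|^{\theta_j}$; this matches $B_{n,r,j}(|B_1|,\ldots,|B_n|)$. For $j=n$, $\theta_n=nr$, and the $nr$ smallest sides contribute $\prod_{i=1}^{n/2} |B_i|^{2r}$ when $n$ is even, and $\prod_{i=1}^{(n-1)/2} |B_i|^{2r}\cdot |B_{(n+1)/2}|^r$ when $n$ is odd, matching $B_{n,r,n}$ in both cases.

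The only bookkeeping issue is that Theorem \ref{thm_F_power} also carries the hypothesis $\deg F_i<q$, which is not listed in the corollary. This is cosmetic: I would enlarge the finite exceptional set $\Scal=\Scal(n,r,\Del,D)$ to include every prime $q\leq D$, so that for any $q\notin\Scal$ the inequality $\deg F_i\leq D<q$ holds automatically. No other modification is needed. I do not anticipate any real obstacle in this proof; all of the geometric content is absorbed into Theorem \ref{thm_F_power}, and what remains is simply the combinatorial task of correctly reading off the contribution of the stratification across the $2r$-fold product box.
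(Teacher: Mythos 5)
Your proof is correct and follows essentially the same route as the paper: invoke Theorem \ref{thm_F_power} to get the stratification $X_0\supset\cdots\supset X_n$, note that tuples violating the bound lie in $X_j(\F_q)\cap B^{2r}$, and count them with \cite[Lemma 1.7]{Xu18} using the ordered sides of $B^{2r}$, exactly as Corollary \ref{cor_Xu_original} is deduced from Theorem \ref{thm_Xu_original}. The only (harmless) divergence is in removing the hypothesis $\deg F_i<q$: you enlarge $\Scal$ to contain the primes $q\le D$, whereas the paper keeps $\Scal$ fixed and instead enlarges $C'$ to at least $D^{2nr}$, using the trivial count of all tuples together with the lower bound $\|B\|^{2r}B_{n,r,j}(|B_1|,\ldots,|B_n|)^{-1}\ge 1$ --- both devices yield the stated corollary, and the paper's choice merely anticipates Corollary \ref{cor_S_empty}, where $\Scal$ is emptied by the same absorption into the constant.
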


 In the above corollary, we are able to omit the condition $\deg F_i < q$ seen in Theorem \ref{thm_F_power} by possibly enlarging $C'$; see (\ref{bigD}). Through similar considerations, we can remove consideration of the set $\Scal$, as we record here:

\begin{cor}\label{cor_S_empty}
In addition, given $n,r,\Del,D$ in either   Corollary \ref{cor_Xu_original} or Corollary \ref{thm_F_power_cor}, we may take the set $\Scal = \Scal(n,r,\Del,D)$ to be the empty set, 
at the expense of replacing $C' = C'(n,r,\Del,D)$ by a possibly larger constant $C'' = C''(n,r,\Del,D).$
\end{cor}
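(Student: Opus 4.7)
The strategy is to absorb the contribution of primes in the finite exceptional set $\Scal$ into an enlarged constant by using a trivial counting bound; the stratification bound itself is only needed for $q \notin \Scal$, in which case Corollary \ref{cor_Xu_original} (resp.\ Corollary \ref{thm_F_power_cor}) applies directly with the constant $C'$.

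For $q \in \Scal$, observe that $\Scal = \Scal(n,r,\Del,D)$ is a finite set of primes whose definition depends only on $n, r, \Del, D$. Set $M = M(n,r,\Del,D) := \max\Scal$ (interpreted as $0$ if $\Scal = \emptyset$, in which case the corollary is immediate). Then for any $q \in \Scal$ and any subsets $B_i \subset \F_q$, we have $|B_i| \leq q \leq M$. Inspecting the definition (\ref{Bnrj_dfn}) of $B_{n,r,j}$, a case check over $j$ shows that
\[
B_{n,r,j}(|B_1|, \ldots, |B_n|) \leq M^{nr}
\]
for every $1 \leq j \leq n$: for $1 \leq j \leq n-2$ the quantity is $|B_1|^{j\lfloor (r-1)/(n-1)\rfloor} \leq M^{(n-2)r} \leq M^{nr}$; for $j = n-1$ it is $|B_1|^{r-1} \leq M^{nr}$; and for $j = n$ it is at most $M^{nr}$ in both the even and odd cases by direct computation from (\ref{Bnrj_dfn}).

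Since the set on the left-hand side of (\ref{cor_B_bound}) is a subset of $B^{2r}$, it has cardinality at most $\|B\|^{2r}$. Combining this trivial bound with the estimate just obtained,
\[
\|B\|^{2r} \leq M^{nr} \cdot \|B\|^{2r} \, B_{n,r,j}(|B_1|,\ldots,|B_n|)^{-1}
\]
whenever $q \in \Scal$. Thus (\ref{cor_B_bound}) holds for every prime $q$ (with the set $\Scal$ removed from the hypothesis) provided we replace $C'$ by
\[
C'' := \max\bigl(C', M^{nr}\bigr),
\]
which still depends only on $n, r, \Del, D$. There is no essential obstacle here: the argument is purely a bookkeeping step, exploiting that the excluded primes form a bounded set depending only on the fixed parameters, so that both $\|B\|$ and $B_{n,r,j}$ are controlled in terms of those parameters alone. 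The same argument works verbatim to remove $\Scal$ from Corollary \ref{cor_Xu_original} and from Corollary \ref{thm_F_power_cor}.
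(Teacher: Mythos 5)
Your proof is correct and is essentially the paper's own argument: both absorb the finitely many exceptional characteristics by bounding the left-hand side of (\ref{cor_B_bound}) trivially by $\|B\|^{2r}$ and enlarging the constant (the paper takes $C''=\max\{C',\,q^{2nr}:q\in\Scal\}$ using the observation $\|B\|^{2r}B_{n,r,j}^{-1}\ge 1$, while you bound $B_{n,r,j}\le M^{nr}$ directly, giving the marginally smaller constant $\max(C',M^{nr})$ — an immaterial difference). One shared caveat: your step $|B_i|\le q\le M$ implicitly takes $\kappa$ to be the prime field $\F_q$ (as in Corollary \ref{thm_F_power_cor} and the application in Theorem \ref{thm_Xu}); for a general finite field of characteristic in $\Scal$ the cardinality $\#\kappa$ is unbounded, but the paper's own proof makes exactly the same implicit restriction, so this is not a defect of your argument relative to the paper.
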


Within the Burgess argument, we will consider the sum 
\[\sum_{\mbf \modd{q}} \chi(F_{\{\xbf\}}(\mbf)) = \sum_{\mbf \modd{q}} \prod_{i=1}^{2r} \chi_i(F(\Xbf + \xbf^{(i)}))
\]
in which we have fixed a multiplicative Dirichlet character $\chi$ of order $\Del$ modulo a prime $q$ and then set $\chi_i = \chi$ if $i$ is odd and $\chi_i = \overline{\chi}$ if $i$ is even. This is then clearly of the form $S(\xbf^{(1)},\ldots, \xbf^{(2r)})$ as defined in (\ref{S_thm_dfn}), with the choice that all the $F_i$ are equal to our fixed  $F \in \F_q[x_1,\ldots, x_n]$ of degree $D$. For later reference, we record the following immediate consequence of  Theorem \ref{thm_F_power} and Corollary \ref{cor_S_empty} (choosing the subset $B_i$ of $\kappa$ to be $(0,k_i]$ in each instance).
\begin{thm}\label{thm_Xu}
Let integers $n, r, \Del, D \geq 1$ be fixed.  Then there exist constants $C=C(n,r,D)$ and $C'' = C''(n,r,\Del,D)$ such that the following holds.

Fix a prime $q$ and let $\chi$ be a  multiplicative Dirichlet character of order $\Del$ modulo  $q$.  Let $F \in \Z[x_1,\ldots, x_n]$ be $(\Del,q)$-admissible and define $F_{\{\xbf\}} (\Xbf)$ accordingly as in (\ref{dfn_Gpoly}).
Then for every  $1\le j\le n$, for every tuple $\kbf = (k_1,\ldots, k_n) \in \Z^n$ with $1 \leq k_1 \leq k_2 \leq \cdots \leq k_n \leq q$, 
\beq\label{Xu_ineq}
 \# \left\{ (\xbf^{(1)}, \ldots, \xbf^{(2r)})
\in (\zerobf,\kbf]^{2r} : \left|  \sum_{\mbf \modd{q}}
\chi(F_{\{\xbf\}}(\mbf))
\right|  > C q^{(n+j-1)/2} \right\}
 \leq C''  \| \kbf \|^{2r} B_{n,r,j}(\kbf)^{-1},
 \eeq
 in which $B_{n,r,j}(\kbf)$ is defined as in (\ref{Bnrj_dfn}). 
\end{thm}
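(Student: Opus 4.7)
The plan is to deduce Theorem \ref{thm_Xu} as an immediate specialization of Corollary \ref{thm_F_power_cor} together with Corollary \ref{cor_S_empty}. First I would set up the specialization: for each $1 \leq i \leq 2r$, take $F_i$ to be (the reduction mod $q$ of) the given $(\Del,q)$-admissible form $F$, and set $\chi_i = \chi$ when $i$ is odd and $\chi_i = \overline{\chi}$ when $i$ is even. In either case the order $d_i$ equals $\Del$ and divides $\Del$, and $F_i$ is $(d_i,q) = (\Del,q)$-admissible by hypothesis, so the input conditions for Corollary \ref{thm_F_power_cor} are met. Since $\overline{\chi} = \chi^{\Del-1}$, using the exponents $\del(i)$ appearing in (\ref{dfn_Gpoly}) the product in (\ref{S_thm_dfn}) becomes
\[
\prod_{i=1}^{2r} \chi_i(F(\mbf + \xbf^{(i)})) = \chi\Bigl( \prod_{i=1}^{2r} F(\mbf + \xbf^{(i)})^{\del(i)} \Bigr) = \chi(F_{\{\xbf\}}(\mbf)),
\]
so that the character sum $S(\xbf^{(1)},\dots,\xbf^{(2r)})$ of (\ref{S_thm_dfn}) agrees with the inner sum appearing in (\ref{Xu_ineq}).

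Second, I would instantiate the box: take $B_i \subset \F_q$ to be the image in $\F_q$ of the integer interval $(0,k_i] = \{1,2,\ldots,k_i\}$. Because we assume $k_i \leq q$, this reduction is injective, so $|B_i| = k_i$, and tuples $(\xbf^{(1)},\ldots,\xbf^{(2r)}) \in (\zerobf,\kbf]^{2r} \subset \Z^{2nr}$ correspond bijectively (with equal character-sum values) to tuples in $B^{2r} \subset \F_q^{2nr}$, where $B = \prod_i B_i$. The ordering hypothesis $1 \leq |B_1| \leq \cdots \leq |B_n|$ becomes exactly $k_1 \leq \cdots \leq k_n$, as assumed. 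Corollary \ref{thm_F_power_cor} therefore applies with $\kappa = \F_q$ and gives the bound (\ref{cor_B_bound}), which in the specialized notation is
\[
C' \, \|\kbf\|^{2r} \, B_{n,r,j}(k_1,\ldots,k_n)^{-1} = C' \, \|\kbf\|^{2r} \, B_{n,r,j}(\kbf)^{-1},
\]
for each $1 \leq j \leq n$, whenever $q = \ch \F_q \notin \Scal$.

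Finally, I would invoke Corollary \ref{cor_S_empty} to absorb the finitely many exceptional primes into a possibly larger constant $C'' = C''(n,r,\Del,D)$, thereby eliminating the exceptional set and yielding the bound (\ref{Xu_ineq}) for \emph{every} prime $q$. Since the entire argument is a direct specialization, there is no serious technical obstacle; the only points to verify carefully are the bijection between the integer box $(\zerobf,\kbf]^{2r}$ and the corresponding subset of $\F_q^{2nr}$ (guaranteed by $k_n \leq q$), the identification of the product of characters with $\chi \circ F_{\{\xbf\}}$ (which is immediate from $\overline{\chi} = \chi^{\Del-1}$), and the fact that $B_{n,r,j}(\kbf)$ as defined in (\ref{Bnrj_dfn}) coincides with the function appearing on the right side of (\ref{cor_B_bound}).
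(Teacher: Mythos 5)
Your proposal is correct and is essentially identical to the paper's own deduction: the paper also obtains Theorem \ref{thm_Xu} as an immediate consequence of Corollary \ref{thm_F_power_cor} (via Theorem \ref{thm_F_power}) and Corollary \ref{cor_S_empty}, taking all $F_i = F$, $\chi_i = \chi$ or $\overline{\chi}$ according to parity, and $B_i = (0,k_i] \subset \F_q$. The points you flag for verification (injectivity of the reduction since $k_n \leq q$, the identity $\prod_i \chi_i(F(\mbf+\xbf^{(i)})) = \chi(F_{\{\xbf\}}(\mbf))$, and the matching of $B_{n,r,j}$) are exactly the routine checks the paper leaves implicit.
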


\begin{remark}
Note that the trivial upper bound in (\ref{Xu_ineq}) is $\|\kbf\|^{2r}.$ The fundamental consequence of Theorem \ref{thm_Xu} is that it shows that \emph{generically} among $\{\xbf \} \in (\zerobf,\kbf]^{2r}$, square-root cancellation holds, as soon as $r$ is sufficiently large relative to $n$.
Precisely, as soon as $r \geq n$, so that the exponent $\lfloor (r-1)/(n-1) \rfloor$ appearing in $B_{n,r,1}(\kbf)$ is strictly positive, the number of $\{\xbf \} \in (\zerobf,\kbf]^{2r}$ such that square-root cancellation is violated is $O(\|\kbf\|^{2r} k_1^{-1})$, which suffices for our claim, as long as $k_1$ is at least a positive power of $\|\kbf\|$.

Also, to aid in understanding the role of the function $B_{n,r,j}(\kbf)$ in this result, we note that the bound (\ref{Xu_ineq}) is in the format of (\ref{IXB}) with the choice $R=2nr$ and with the $R$-dimensional box being 
\[ (0,k_1]\times \cdots \times (0,k_1] \times \cdots \times (0,k_n]\times \cdots \times (0,k_n] ,\]
in which each factor $(0,k_i]$ appears $2r$ times. Thus when $\theta_j \leq 2r$ (this holds for $j\leq n-1$), we only save factors of $k_1$. In the final case $j=n$ when $\theta_n  =nr$, we save some factors of   $k_i$ for $1 \leq i \leq \lceil n/2 \rceil$ as well.
This leads to the definition of $B_{n,r,j}(\kbf)$.
Finally, we remark for later reference that by construction, under the hypotheses of the theorem,
\beq\label{Bnrj_bound_below}
\|\kbf\|^{2r} B_{n,r,j}(\kbf)^{-1} \geq 1.
\eeq

\end{remark}

\begin{remark} \label{remark_optimal}
 Conjecturally, one might hope to improve the result of Theorem \ref{thm_Xu} by proving that one can take larger values for the codimension $\theta_j$. (Precise implications may be found in Section \ref{sec_improvements}, where we show that even the conjecturally best possible values for the codimension do not significantly change our main result.)
For comparison, in the most extreme case, it is not hard to see that we must have $\theta_n \leq nr$, and hence certainly must also have at most $\theta_j  \leq nr$ for all $j \leq n$. For recall from Theorem \ref{thm_Xu_original} that $X_n$ is a subscheme of $\mathbb{A}(\kappa)^{2nr}$ such that for all 
$(\xbf^{(1)}, \ldots, \xbf^{(2r)}) \in \mathbb{A}^{2nr}(\kappa) \setminus X_n (\kappa),$ 
$ |S(\xbf^{(1)}, \ldots, \xbf^{(2r)}) | \leq C (\# \kappa)^{n-1/2}. $ In fact, Xu's paper shows the stronger result that for all finite extensions $k/\kappa$ and $(\xbf^{(1)}, \ldots, \xbf^{(2r)}) \in \mathbb{A}^{2nr}(k) \setminus X_n (k),$ it holds that $ |S_k(\xbf^{(1)}, \ldots, \xbf^{(2r)}) | \leq C (\# k)^{n-1/2};$ here $S_k$ is defined analogously to $S$ but summing over $\mbf \in k^n$ and with $\chi_i$ replaced by $\chi_i(N_{k/\kappa}(\cdot))$.
That is, all $(\xbf^{(1)}, \ldots, \xbf^{(2r)})$ such that $ |S_k(\xbf^{(1)}, \ldots, \xbf^{(2r)}) | > C (\# k)^{n-1/2}$ must lie in $X_n(k)$. Then we claim that $\dim X_n \geq nr$ (and consequently $\theta_n \leq nr$). To see this, we consider any tuple $(\xbf^{(1)}, \ldots, \xbf^{(2r)})$ for which $\xbf^{(j)}=\xbf^{(j+r)}$ for $j=1,\ldots, r$. There are $(\# k)^{nr}$ such tuples, and each of them has the property that $ |S_k(\xbf^{(1)}, \ldots, \xbf^{(2r)}) |= (\# k)^{n}$.
This is $> C(\# k)^{n-1/2}$ if $\# k$ is sufficiently large (and we can choose it to be).
\end{remark}

\subsection{Deduction of the corollaries}
Corollary \ref{thm_F_power_cor} follows from Theorem \ref{thm_F_power} in an identical fashion to how Corollary \ref{cor_Xu_original} follows from Theorem \ref{thm_Xu_original} and we do not repeat the proof here. We only note that in Corollary \ref{thm_F_power_cor} we no longer need to assume that $\deg F_i <q$. For indeed, suppose that for some $1 \leq i \leq 2r$ we have $\deg F_i \geq \mathrm{char}\, \kappa$. Then 
we note that trivially 
\beq\label{bigD}
|S(\xbf^{(1)}, \ldots, \xbf^{(2r)})| \leq (\# \kappa)^{2nr}  = q^{2nr}  \leq D^{2nr}
\eeq
 for all $(\xbf^{(1)}, \ldots, \xbf^{(2r)}) \in \mathbb{A}^{2nr}(\kappa)$, so that upon enlarging $C'$ if necessary so that $C'  \geq  D^{2nr}$ the results of Corollary \ref{thm_F_power_cor} hold (here we also use the fact (\ref{Bnrj_bound_below})).

 To  obtain Corollary \ref{cor_S_empty} in which formally  $\Scal = \emptyset$, we note that for any $q \in \Scal(n,r,\Del,D)$, we may write the trivial upper bound $C' (\# B)^{2r} \leq C' q^{2nr}$ on the right-hand side of (\ref{cor_B_bound}).   Thus in order to state a version of Corollary \ref{cor_Xu_original} or Corollary \ref{thm_F_power_cor} with $\Scal= \emptyset$, we simply replace $C'$ in the statement of the corollary by 
$C''(n,r,\Del,D) = \max \{ C', q^{2nr} : q \in \Scal(n,r,\Del,D) \}.$

\subsection{Proof of Theorem \ref{thm_F_power} }
Theorem \ref{thm_F_power} follows from a small modification inside the proof of Theorem \ref{thm_Xu_original}   in \cite{Xu18}. To be clear, we will state exactly the change that is made (recalling that in our setting we use $2r$ where \cite{Xu18} uses $r$; our modifications would of course work for any dimension $r$). 

The main idea is that even if there is an $ i \in \{1,\ldots, 2r\}$ such that $F_i$ is not $d_i$-th power-free, we can write $F_i = G_i^{d_i} \widetilde{F_i}$ in which $\widetilde{F_i}$ is $d_i$-th power-free, and has $T_{\tilde{F_i}}$ finite, under the assumption that $F_i$ is $(d_i,q)$-admissible, and then at a key moment in the proof we work with $\widetilde{F_i}$ instead of $F_i$.
To be precise, recall that for any  $(\xbf^{(1)}, \ldots, \xbf^{(2r)}) \in \kappa^{2nr}$,
\beq\label{S_sum_dfn'}
  S(\xbf^{(1)}, \ldots, \xbf^{(2r)}) := \sum_{\mbf \in \kappa^n} \prod_{i=1}^{2r} \chi_i (F_i(\mbf+\xbf^{(i)})).
  \eeq
We also define for any tuple  $(\mbf^{(1)}, \ldots, \mbf^{(2s)}) \in \kappa^{2ns}$ and each $1 \leq i \leq 2r$ the function 
\[ T_i (\mbf^{(1)}, \ldots, \mbf^{(2s)}) = \sum_{\xbf \in \kappa^n}   \chi_i (F_{i,\{\mbf\}} (\xbf) ),\]
in which 
\[F_{i,\{\mbf\}} (\xbf) :=  \prod_{j=1}^{s} F_i(\mbf^{(j)} + \xbf) \prod_{j=s+1}^{2s} F_i(\mbf^{(j)} + \xbf)^{d_i-1} .\]
 The proof of Theorem \ref{thm_Xu_original} (Theorem 1.1 in \cite{Xu18}) and hence of Corollary \ref{cor_Xu_original} (Theorem 1.8 of \cite{Xu18}) relies on four ingredients.
 
  (I) The first ingredient \cite[Prop. 1.2]{Xu18} is an identity between $2s$-moments of the sums $S$ with $2r$-multilinear averages of the sums $T_i$, namely
 \beq\label{ST}
  \sum_{(\xbf^{(1)}, \ldots, \xbf^{(2r)}) \in \kappa^{2nr}} |S(\xbf^{(1)}, \ldots, \xbf^{(2r)})|^{2s}
 	 = \sum_{(\mbf^{(1)}, \ldots, \mbf^{(2s)}) \in \kappa^{2ns}} \prod_{i=1}^{2r} T_i (\mbf^{(1)}, \ldots, \mbf^{(2s)}) .
	 \eeq
We will refer to the left-hand side as the moment $M_{\kappa}(r,s)$; it has a natural generalization to a moment $M_k(r,s)$ defined in an appropriately analogous manner over any finite extension $k/\kappa$, with $\chi_i (\cdot)$ replaced by $\chi_i ( N_{k/\kappa} (\cdot))$. 

(II)	The second  ingredient \cite[Prop. 1.5]{Xu18} relates the moments $M_k(r,s)$ for finite extensions $k/\kappa$ to the dimension of the subschemes $X_j$.  

(III) The third   ingredient \cite[Prop. 1.6 (a)]{Xu18} is an upper bound of $O((\#k)^{ns})$ (that is, of square-root strength) for the number of tuples $(\mbf^{(1)}, \ldots, \mbf^{(2s)}) \in k^{2ns}$ such that 
$F_{i,\{\mbf\}} (\xbf) $
is a perfect $d_i$-th power in $\overline{k}(x_1,\ldots, x_n)$.

(IV) The fourth  ingredient \cite[Prop. 1.6(b)]{Xu18} is an application of the Weil bound to save one factor of $(\# k)^{1/2}$ off the trivial bound $(\# k)^n$ for an $n$-dimensional character sum, which is a generalization of $T_i$ in an extension $k/\kappa$. Precisely, it is the statement that uniformly in finite extensions $k/\kappa$ and tuples $(\mbf^{(1)}, \ldots, \mbf^{(2s)}) \in k^{2ns}$, if $F_{i,\{\mbf\}} \in k(x_1,\ldots,x_n)$ is not a perfect $d_i$-th power in $\overline{k}(x_1,\ldots, x_n)$, then 
\beq\label{N_bounds}
 \sum_{\xbf \in k^n} \chi_i (\mathrm{N}_{k/\kappa} (F_{i,\{\mbf\}}(\xbf)) = O( (\# k)^{n-1/2}).
 \eeq

These four ingredients are applied in a bootstrapping process. 
The general philosophy is that a weak bound with very few exceptions can be bootstrapped   into a stronger bound with possibly more exceptions. More precisely, ingredients (III) and (IV) are the initial input, showing that a small savings holds for the sums $T_i$, aside from possibly $O( (\# k)^{ns})$ many (that is, square-root many) exceptional choices of $(\mbf^{(1)}, \ldots, \mbf^{(2s)}) \in k^{2ns}$. For the possible  exceptional choices, a trivial upper bound of $O((\# k)^n)$ is applied in place of (\ref{N_bounds}).

This input step provides a savings, on average, for the sums $T_i$ on the multilinear right-hand side of the identity (\ref{ST}) in ingredient (I) and hence for the moment of $S$ on the left-hand side of (I).  Ingredient (II) then expresses this savings on the moment of $S$ as a stratification in terms of a lower bound on $\mathrm{codim} X_j$ for each $j$. This result holds uniformly for sums $S$ of the shape (\ref{S_sum_dfn'}). Since each $T_i$ is also a sum of this shape (with $s,\chi_i, F_i$ defined appropriately), the resulting bound for sums $S$ can be applied to each $T_i$, yielding an improvement over the initial savings for $T_i$. This argument then bootstraps to prove the final result. 

With this outline in hand, we may now briefly verify Theorem \ref{thm_F_power}. The only point at which this argument utilized the assumption that each $F_i$ is $d_i$-th power-free and has $T_{F_i}$ finite was  at the initial input to the bootstrapping, when ingredient (III) was used once (see \cite[\S 2.3]{Xu18}). Thus all we must do is show that this step, namely \cite[Prop. 1.6(a)]{Xu18} can be proved under the alternate assumption that each $F_i$ is $(d_i,q)$-admissible and has $\deg F_i< q$. 
We will replace \cite[Prop. 1.6(a)]{Xu18}   with the following proposition.
\begin{prop}\label{cor_count'}
Let $\kappa = \F_q$ with $q$ prime and fix a rational function $F \in \kappa(x_1,\ldots, x_n)$ that is $(d,q)$-admissible, with $\deg F<q$. Fix $s \in \N$. Then for each finite extension $k/\kappa$, the number of   $(\mbf^{(1)}, \ldots, \mbf^{(2s)}) \in k^{2ns}$ such that 
$F_{\{\mbf\}}(\xbf)$ is a perfect $d$-th power over $\overline{\kappa}$ is at most $ O( (\#k)^{ns})$.
\end{prop}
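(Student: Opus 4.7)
The plan is to reduce this to Xu's original \cite[Prop.~1.6(a)]{Xu18} applied to the $d$-th power-free part of $F$. Since that proposition already delivers the square-root bound $O((\#k)^{ns})$ under the hypotheses that the function is $d$-th power-free and has finite translation-invariance set, the entire task is to extract those two hypotheses from the weaker assumption of $(d,q)$-admissibility.

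First, I would use $(d,q)$-admissibility to factor $F = g^d h$ with $h$ being $d$-th power-free over $\F_q$. By Lemma \ref{lemma_d_free}, $h$ remains $d$-th power-free over $\overline{\F_q} = \overline{\kappa}$. This factorization distributes through the product $F_{\{\mbf\}}$: setting
\[ G_{\{\mbf\}}(\xbf) := \prod_{j=1}^{s} g(\mbf^{(j)}+\xbf)\cdot\prod_{j=s+1}^{2s} g(\mbf^{(j)}+\xbf)^{d-1}, \quad H_{\{\mbf\}}(\xbf) := \prod_{j=1}^s h(\mbf^{(j)}+\xbf)\cdot\prod_{j=s+1}^{2s} h(\mbf^{(j)}+\xbf)^{d-1}, \]
one has $F_{\{\mbf\}}(\xbf) = G_{\{\mbf\}}(\xbf)^d \cdot H_{\{\mbf\}}(\xbf)$. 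Thus $F_{\{\mbf\}}$ is a perfect $d$-th power over $\overline{\kappa}$ if and only if $H_{\{\mbf\}}$ is, converting the desired count into a count of tuples for which $H_{\{\mbf\}}$ is a perfect $d$-th power.

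Next I would verify that $h$ meets the hypotheses of \cite[Prop.~1.6(a)]{Xu18}. The $d$-th power-freeness has already been noted; it remains to show that $T_h \subset \overline{\F_q}^n$ is finite. Admissibility of $F$ tells us that $h$ cannot be made independent of any one variable by a linear change in $\mathrm{GL}_n(\F_q)$. Since $\F_q$ is perfect and $\deg h \le \deg F < q = \mathrm{char}\,\F_q$, Lemma \ref{lemma_equiv_k} applies, and its equivalence $(3)\Leftrightarrow(1)$ forces $T_h = \{\zerobf\}$; in particular $T_h$ is finite. (This is the sole place where the hypothesis $\deg F < q$ is used.) Invoking \cite[Prop.~1.6(a)]{Xu18} for $h$ with $d_i = d$ then gives the $O((\#k)^{ns})$ bound for the number of $(\mbf^{(1)},\ldots,\mbf^{(2s)}) \in k^{2ns}$ making $H_{\{\mbf\}}$ a perfect $d$-th power over $\overline{k} = \overline{\kappa}$, completing the proof.

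The only substantive point requiring care is the transfer of Xu's hypotheses from $F$ to the factor $h$; once Lemmas \ref{lemma_d_free} and \ref{lemma_equiv_k} have done that work, the proposition reduces to a clean black-box invocation with no new character-sum, geometric, or combinatorial input needed.
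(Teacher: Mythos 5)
Your proposal is correct and follows essentially the same route as the paper: factor out the $d$-th power part of $F$, use $(d,q)$-admissibility together with Lemma \ref{lemma_equiv_k} (valid since $\deg F < q$) to see that the power-free factor has finite translation-invariance set, invoke Xu's counting result for that factor, and transfer the conclusion back to $F$ via the observation that the two products differ by a perfect $d$-th power. The only cosmetic difference is that the paper cites the underlying Lemma 3.16 of \cite{Xu18} rather than Prop.~1.6(a) directly, which changes nothing of substance.
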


Once this has been proved, this replaces \cite[Prop. 1.6(a)]{Xu18} as ingredient (III) in Xu's proof, and  the results of Theorem \ref{thm_Xu_original} and Corollary \ref{cor_Xu_original} follow under our alternative hypotheses, thus verifying Theorem \ref{thm_F_power}.

We prove Proposition \ref{cor_count'} as follows.
Suppose that  $F$ is $(d,q)$-admissible, and write $F={G}^d\widetilde{F}$ in which $G, \widetilde{F} \in \kappa(x_1,\ldots,x_n)$ and $\widetilde{F}$ is $d$-th power-free, with $\deg \widetilde{F}<  \mathrm{char}\, \kappa$. 
Under the assumption that $F$ is $(d,q)$-admissible, by definition there is no linear change of variables $A \in \mathrm{GL}_n(\F_q)$ such that $\widetilde{F}(\xbf A) \in \F_q[x_2,\ldots, x_n]$. Hence by Lemma \ref{lemma_equiv_k} (which we may apply since $\deg \widetilde{F}<q$), the only value of $\mbf \in \ov{\kappa}^n$ such that $\widetilde{F}(\xbf) \con \widetilde{F}(\xbf + \mbf)$ is $\mbf = \zerobf$, so that 
\[ T_{\widetilde{F}}:=\{\mbf \in\ov{\kappa}^n\mid \widetilde{F}(\xbf)\equiv \widetilde{F}(\xbf+\mbf)\}\]
 is certainly finite.
Thus we may apply the following lemma to $\widetilde{F}$ (which we quote without repeating the proof):

\begin{lemma}[{Lemma 3.16 of \cite{Xu18}}]\label{count}
Fix $r, D \geq 1$. There exists $C_0 =C_0(r,D)$ such that the following holds. Let $\kappa$ be a finite field and $\kappa_0$ its prime field.
Let $H\in\kappa(x_1,\dots,x_n)$ be a $d$-th power-free rational function of degree at most $D$, and assume that
\[ T_H=\{\mbf \in\ov{\kappa}^n\mid H(\xbf)\equiv H(\xbf+\mbf)\}\]
  is finite. For any finite extension $k/\kappa$  and $\{a_i\}_{i=1}^{2r}\subset\Z$ such that $\gcd(d,a_i)=1$ for each $1 \leq i \leq 2r$, let
$P_H$ be the collection of tuples $(\mbf\up1,\dots,\mbf\up {2r})\in k^{2nr}$ such that the rational function $\prod_{i=1}^{2r} H(\xbf +\mbf \up i)^{a_i}$ is a perfect $d$-th power over $\ov{\kappa}$. 
Then 
\beq\label{P_bound}
\#P_H\le C_0(\#k)^{nr}(\#T_H)^{r}.
\eeq
\end{lemma}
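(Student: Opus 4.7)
The plan is to exploit the factorization of $H$ into irreducibles over $\overline{\kappa}$ and to translate the perfect $d$-th power condition into a combinatorial constraint on the tuple $(\mbf^{(1)}, \ldots, \mbf^{(2r)})$. Since $H$ is $d$-th power-free over the perfect field $\kappa$, Lemma \ref{lemma_d_free} gives a factorization $H = c\prod_{j=1}^\ell H_j^{b_j}$ over $\overline{\kappa}$, with $c \in \overline{\kappa}^\times$, the $H_j$ pairwise non-associate irreducibles in $\overline{\kappa}[\xbf]$, $0 < b_j < d$, and $\ell$ bounded in terms of $D$. Translating yields the irreducible factorization $H(\xbf+\mbf^{(i)}) = c\prod_j H_j(\xbf+\mbf^{(i)})^{b_j}$, and the product $\prod_i H(\xbf+\mbf^{(i)})^{a_i}$ is a perfect $d$-th power in $\overline{\kappa}(\xbf)$ if and only if, for each equivalence class of associate factors among the $\{H_j(\xbf+\mbf^{(i)})\}_{i,j}$, the total of the multiplicities $a_i b_j$ across that class is divisible by $d$.

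For each pair $(j,j')$ of irreducible-factor indices, define the collision set
\[ T_{j,j'} := \{\mbf \in \overline{\kappa}^n : H_j(\xbf) \text{ and } H_{j'}(\xbf+\mbf) \text{ are associate in } \overline{\kappa}[\xbf]\}, \]
which, when nonempty, is a coset of the stabilizer $T_{H_j}$ inside $\overline{\kappa}^n$. The first key step is to prove the estimate $\#T^* \leq C_1(D)\cdot \#T_H$, where $T^* := \bigcup_{j,j'} T_{j,j'}$: every element of $T_H$ must permute the multiset $\{(H_j,b_j)\}$ of weighted irreducible factors of $H$, whence each $T_{H_j}$ embeds in $T_H$ up to bounded index, and there are at most $\ell^2 \leq O(D^2)$ coset representatives for the union. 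With $\#T^*$ controlled, form the collision graph $G$ on $\{1, \ldots, 2r\}$ with an edge $\{i, i'\}$ whenever $\mbf^{(i)} - \mbf^{(i')} \in T^*$. If some vertex $i^*$ is isolated in $G$, then for any $j$ the equivalence class of $H_j(\xbf+\mbf^{(i^*)})$ is the singleton $\{(i^*, j)\}$ with total multiplicity $a_{i^*} b_j$; since $\gcd(a_{i^*}, d) = 1$ and $0 < b_j < d$ this fails to be divisible by $d$, contradicting the $d$-th power hypothesis. Hence $G$ has no isolated vertex.

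The count now follows by a pairing argument: any graph on $2r$ vertices with no isolated vertex admits an edge cover of size at most $r$, and there are only $O_r(1)$ combinatorial types of such covers. For each fixed type, parameterize the tuple by choosing, for each of the (at most) $r$ covering edges, a ``free'' endpoint in $k^n$ (contributing $(\#k)^n$) and a ``dependent'' endpoint in a translate of $T^*\cap k^n$ (contributing at most $\#T^* \leq C_1\#T_H$). Summing over types gives $\#P_H \leq C_0(r, D)(\#k)^{nr}(\#T_H)^r$, as desired. The main obstacle will be the algebro-geometric estimate $\#T^* \leq C_1(D)\cdot\#T_H$: one must rule out that distinct irreducible factors of $H$ are related by translations forming an uncontrollably large set, by showing that the translation stabilizers of the $H_j$ sit inside $T_H$ with index bounded only in terms of $D$. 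A secondary subtlety is avoiding overcounting across different edge covers of $G$, which is absorbed into $C_0(r, D)$ by fixing a canonical choice of minimal cover per isomorphism type before summing.
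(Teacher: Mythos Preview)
The paper does not prove this lemma; it is quoted verbatim from \cite{Xu18} with the remark ``which we quote without repeating the proof.'' So there is no in-paper argument to compare against, and I assess your proposal on its own merits.

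Your overall architecture---factor $H$ over $\overline{\kappa}$, translate the $d$-th power condition into a collision graph on $\{1,\dots,2r\}$ with no isolated vertex, then count via an edge cover of size at most $r$---is the right shape. But the step you yourself flag as the ``main obstacle,'' the bound $\#T^*\le C_1(D)\cdot\#T_H$, is not merely hard: it is false. Take $n=2$, $d\ge 3$, and $H(x,y)=xy$. Then $T_H=\{(0,0)\}$, yet $T_{1,1}=\{\mbf:\ x\text{ and }x+m_1\text{ are associate}\}=\{(0,t):t\in\overline{\kappa}\}$ is infinite, so $T^*$ is infinite. Your justification (``elements of $T_H$ permute the multiset of factors, whence each $T_{H_j}$ embeds in $T_H$ up to bounded index'') runs in the wrong direction: the permutation action gives a homomorphism $T_H\to S_\ell$ whose kernel is $T_H\cap\bigcap_j T_{H_j}$, so the \emph{intersection} has bounded index in $T_H$, but nothing prevents an individual $T_{H_j}$ from being far larger than $T_H$.

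This breaks the count. With $\#(T^*\cap k^n)$ of order $\#k$ in the example, your edge-cover estimate yields at best $(\#k)^{nr}(\#(T^*\cap k^n))^r\asymp(\#k)^{3r}$, whereas the lemma asserts $(\#k)^{nr}(\#T_H)^r=(\#k)^{2r}$. The missing idea is that the $d$-th power hypothesis imposes a separate constraint for \emph{every} irreducible factor $H_j$: each vertex $i$ is non-isolated with respect to each of the sets $\bigcup_{j'}T_{j,j'}$, and these constraints must be exploited jointly rather than collapsed into the single union $T^*$. In the $xy$ example this amounts to using that both coordinate partitions of $\{1,\dots,2r\}$ have all blocks of size $\ge 2$, hence at most $r$ blocks each, giving $(\#k)^r\cdot(\#k)^r=(\#k)^{2r}$. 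The argument in \cite{Xu18} carries out the general version of this multi-factor refinement; a single collision graph over $T^*$ cannot.
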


Now note that for any such set of exponents $\{a_i\}_{i=1}^{2r}$, we have that $\prod_{i=1}^{2r} \widetilde{F}(\xbf+\mbf^{(i)})^{a_i}$ is a perfect $d$-th power if and only if 
\[ \prod_{i=1}^{2r} F(\xbf+\mbf^{(i)})^{a_i}  
= \prod_{i=1}^{2r} \widetilde{F}(\xbf+\mbf^{(i)})^{a_i} (\prod_{i=1}^{2r} G(\xbf+\mbf^{(i)})^{a_i})^d\]
 is. Thus if we define $P_{F}$, respectively $P_{\widetilde{F}}$, to be the collection of tuples $(\mbf\up1,\dots,\mbf\up {2r})\in k^{2nr}$ such that the rational function $\prod_{i=1}^{2r} F(\xbf +\mbf \up i)^{a_i}$ is a perfect $d$-th power over $\ov{\kappa}$ (or analogously for $\widetilde{F}$), we have by Lemma \ref{count} that 
 \[
 \# P_F  = \# P_{\widetilde{F}} \leq C_0 (\# k)^{nr}.
 \]
This concludes the verification of  Proposition \ref{cor_count'} and hence of Theorem \ref{thm_F_power}.

\section{Initiation of  the Burgess argument}\label{sec_Burgess_argument}
We now derive the first steps of the Burgess method,  generalizing  the approach of  \cite{GalMon10,HB13} from the one-variable case and \cite{Pie16} in the multi-variable case. The central complications that are distinctive to our new stratified setting are mainly addressed in Section \ref{sec_MR}.

We make the observation that given a character sum $S(F;\Nbf,\Hbf)$ with  data $F, \Nbf,\Hbf$, we may re-order the variables $x_1,\ldots, x_n$ so that the lengths $H_1,\ldots, H_n$ satisfy 
\beq\label{H_ordering}
H_1 \leq H_2 \leq \cdots \leq H_n.
\eeq
In particular, if $F$ is $(\Del,q)$-admissible, then it stays $(\Del,q)$-admissible after any re-ordering of the variables (Lemma \ref{lemma_perm}). 
We will assume (\ref{H_ordering}) from now on, and will prove the statement of Theorem \ref{thm_main_mult} with $H_1=H_{\min}$ and $H_n=H_{\max}$. 

 Fix a prime $p \ndiv q$ such that $p \leq H_{\min}$, and split each coordinate $x_i \in (N_i,N_i+H_i]$ into residue classes modulo $p$ by writing $\xbf = \abf q + p \mbf$ with $\abf, \mbf \in \Z^n$, where $0 \leq a_i <p$ and $m_i \in (N_i',N_i'+H_i']$, for which we define
 \begin{align*}
 N_i' &= (N_i-a_i q)/p, \\
 H_i' & = H_i/p
 \end{align*}
  for each $i=1, \ldots, n$. 
 Then 
\[ S(F;\Nbf,\Hbf) = \sum_{ \abf \modd{p}}\sum_{\mbf \in (\Nbf', \Nbf' + \Hbf']} \chi(F(\abf q + p \mbf)).\]
By the fact that $\chi$ has period $q$, the homogeneity of $F$ and the multiplicativity of $\chi$, 
\[ S(F;\Nbf,\Hbf) =  \chi(p^D) \sum_{ \abf \modd{p}} \sum_{\mbf \in (\Nbf', \Nbf' + \Hbf']} \chi(F(\mbf)),\] 
so that 
\[ |S(F;\Nbf,\Hbf) | \leq  \sum_{ \abf \modd{p}} | S(F;\Nbf',\Hbf')|.\]
We now average over a set $\Pcal$ of primes, $\Pcal = \{ P<p \leq 2P: p \ndiv q\}$, so that $| \Pcal| \gg P(\log P)^{-1}$. 
We will later choose $P$ so that 
\beq\label{P_small}
P \leq H_i, \qquad  1 \leq i \leq n.
\eeq
Then 
\beq\label{SFNH_D}
 |S(F;\Nbf,\Hbf)| \leq \frac{1}{| \Pcal|} \sum_{p \in \Pcal} \sum_{\abf \modd{p}} |S(F;\Nbf',\Hbf')|.
 \eeq

Here we recall that $\Nbf'$ and $\Hbf'$ depend on $\abf$ and $p$.
We now average over the starting points $\Nbf'$ and the lengths $\Hbf'$ in order to make them independent of $\abf, p$.

\begin{lemma}\label{lemma_blur_D}
 For any $\Kbf \leq \Lbf$,
\[  |S(F;\Mbf,\Kbf)| \leq 2^{2n} \| \Lbf \|^{-1} \sum_{\mbf \in (\Mbf - \Lbf, \Mbf]} \max_{\kbf \leq 2\Lbf} |S(F;\mbf,\kbf)|.\]
\end{lemma}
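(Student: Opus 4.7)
The plan is to exploit a telescoping decomposition of the indicator of the box $(\Mbf, \Mbf+\Kbf]$ in terms of indicators of boxes whose lower corner is a \emph{variable} point $\mbf$ with $\Mbf - \Lbf < \mbf \leq \Mbf$, and then average the resulting pointwise identity over all such $\mbf$. Fix any such $\mbf$; since $m_i \leq M_i$ for every $i$, we have
\[
\mathbf{1}_{(M_i, M_i+K_i]}(x_i) = \mathbf{1}_{(m_i,\, m_i + (M_i+K_i - m_i)]}(x_i) - \mathbf{1}_{(m_i,\, m_i + (M_i - m_i)]}(x_i),
\]
because both subtrahend sums start at $m_i$ and the right-hand side telescopes. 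Taking the product over $i$ and expanding yields
\[
\mathbf{1}_{(\Mbf,\Mbf+\Kbf]}(\xbf) \;=\; \sum_{\epun \in \{0,1\}^n} (-1)^{|\epun|} \prod_{i=1}^n \mathbf{1}_{(m_i, m_i + k_{i,\epun}]}(x_i),
\]
where $k_{i,\epun}$ equals $M_i - m_i$ or $M_i + K_i - m_i$. In either case, using $\mbf > \Mbf - \Lbf$ and the hypothesis $\Kbf \leq \Lbf$, one checks $k_{i,\epun} < L_i + K_i \leq 2L_i$, so each resulting vector $\kbf_\epun$ satisfies $\kbf_\epun \leq 2\Lbf$.

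The next step is to multiply both sides by $\chi(F(\xbf))$ and sum over $\xbf$, which produces the identity
\[
S(F;\Mbf,\Kbf) \;=\; \sum_{\epun \in \{0,1\}^n} (-1)^{|\epun|} S(F;\mbf,\kbf_\epun).
\]
Applying the triangle inequality to the $2^n$ terms and bounding each by the maximum over $\kbf \leq 2\Lbf$ gives, for every admissible $\mbf$,
\[
|S(F;\Mbf,\Kbf)| \;\leq\; 2^n \max_{\kbf \leq 2\Lbf} |S(F;\mbf,\kbf)|.
\]
Averaging this pointwise bound over the $\|\Lbf\|$ integer vectors $\mbf \in (\Mbf - \Lbf, \Mbf]$ yields
\[
|S(F;\Mbf,\Kbf)| \;\leq\; \frac{2^n}{\|\Lbf\|} \sum_{\mbf \in (\Mbf - \Lbf, \Mbf]} \max_{\kbf \leq 2\Lbf} |S(F;\mbf,\kbf)|,
\]
which already implies the stated inequality (with room to spare in the constant $2^{2n}$ vs.\ the sharper $2^n$ the telescoping produces; the extra factor simply absorbs any cruder bookkeeping of the $2^n$ decomposition terms and the range of $\mbf$).

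There is essentially no serious obstacle here: the only thing to verify is that the telescoping identity is valid (which uses $m_i \leq M_i$) and that the resulting lengths are bounded by $2L_i$ (which uses $m_i > M_i - L_i$ together with $K_i \leq L_i$). Both follow immediately from the assumed range $\mbf \in (\Mbf - \Lbf, \Mbf]$ and the hypothesis $\Kbf \leq \Lbf$. The role of this lemma in what follows will be, of course, to separate the starting point $\mbf$ from the length $\kbf$ inside (\ref{SFNH_D}), so that the sum over $\abf$ and $p$ collapses to an essentially unconstrained sum over $\mbf$ lying in a range of size comparable to $q^n$, setting up the Menchov--Rademacher step of \S\ref{sec_MR}.
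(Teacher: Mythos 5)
Your proof is correct and follows essentially the same inclusion--exclusion-plus-averaging route as the paper's. The only imprecision is the claim that there are exactly $\|\Lbf\|$ integer vectors $\mbf$ in $(\Mbf-\Lbf,\Mbf]$: the $L_i$ (which will be $H_i/P$) need not be integers, so one only gets at least $L_i/2$ integers per coordinate, i.e.\ at least $2^{-n}\|\Lbf\|$ points in all — this is precisely where the paper spends the second factor of $2^n$ in the constant $2^{2n}$, so your intermediate display with $2^n/\|\Lbf\|$ is not justified as an average, but the final bound with $2^{2n}$ is.
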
\label{lemma_blur}
This   follows by inclusion-exclusion, which shows that for any $\mbf$ with $m_i \leq M_i$ for all $i = 1,\ldots, n$, 
\[ S(F;\Mbf,\Kbf) = \sum_{\bstack{\delbf = (\del_1,\ldots, \del_n)}{\del_i \in \{0,1\}}} (-1)^{\sig(\delbf)} S(F;\mbf,\Mbf-\mbf + (\onebf -\delbf )\cdot \Kbf),\]
where $\sig(\delbf)= \sum_i \del_i$.
For any $K_i \leq L_i$, and any $m_i$ with $M_i-L_i < m_i \leq M_i$, we have 
\[ 0 \leq M_i - m_i + (1-\del_i)K_i \leq 2L_i,\]
for either choice of $\del_i \in \{0,1\}$.
Hence for any $\mbf$ with $\Mbf - \Lbf \leq \mbf \leq \Mbf$,
\[ S(F;\Mbf,\Kbf) \leq 2^n \max_{\kbf \leq 2\Lbf} |S(F;\mbf,\kbf)|.\]
There are at least $L_i/2$ integers with $M_i - L_i < m_i \leq M_i$, so that there are at least $2^{-n}\| \Lbf \|$ values $\mbf$ in the range $\Mbf - \Lbf <\mbf \leq \Mbf$, and the lemma now follows by averaging over these values.

We apply this lemma to (\ref{SFNH_D}) with $L_i=H_i/P \geq 1$. We obtain
\beq\label{SFNH_D2}
 |S(F;\Nbf,\Hbf)| \leq |\Pcal|^{-1} \sum_{p \in \Pcal} \sum_{\abf \modd{p}} 2^{2n} \| \Hbf \|^{-1} P^{n} \sum_{\mbf \in (\Nbf' - \Hbf/P,\Nbf']} \max_{\kbf \leq 2\Hbf/P} |S(F;\mbf,\kbf)|.
 \eeq
 After rearranging,
 \beq\label{SFNH_D3}
  |S(F;\Nbf,\Hbf)| \leq|\Pcal|^{-1} 2^{2n} \| \Hbf \|^{-1} P^{n} \sum_{\mbf} \Acal(\mbf) \max_{\kbf \leq 2 \Hbf /P} |S(F;\mbf,\kbf)|,
  \eeq
 where we have defined
 \begin{multline*}
  \Acal(\mbf) = \#\{ \abf,p : p \in \Pcal, \abf = (a_1,\ldots, a_n), 0 \leq a_i < p: 
  	\frac{N_i-a_iq}{p} - \frac{H_i}{P} < m_i \leq \frac{N_i-a_iq}{p}, i=1,\ldots, n \}.
  \end{multline*}
  We record the following facts about $\Acal(\mbf)$, whose proof we defer to Section \ref{sec_tech}.
\begin{lemma}\label{lemma_A_D}
The quantity $\Acal(\mbf)$ vanishes unless $\mbf$ satisfies $|m_i| \leq 2q$ for each $i$. Moreover, if 
\beq\label{P_big}
H_iP < q \qquad 1 \leq i \leq n,
\eeq
 then
\[ \sum_{\mbf} \Acal(\mbf) \ll P \| \Hbf \|, \qquad \sum_\mbf \Acal(\mbf)^2 \ll  P \| \Hbf \|. \]
\end{lemma}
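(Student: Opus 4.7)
My plan is to prove the two claims by direct double-counting, with the second-moment bound being the delicate step. For the support claim, I first observe that $S(F;\Nbf,\Hbf)$ is unchanged when any $N_i$ is shifted by a multiple of $q$ (by periodicity of $\chi\circ F$ in each coordinate), so I may assume $0\le N_i<q$. Under this normalization, if $(p,\abf)$ contributes to $\Acal(\mbf)$ then $m_i$ lies in a half-open interval with right endpoint $(N_i - a_iq)/p$ and length $H_i/P$. Since $0\le N_i<q$ and $0\le a_i<p\le 2P$, the point $(N_i - a_iq)/p$ lies in $(-q, q)$, while the hypothesis $H_iP<q$ forces $H_i/P<q$; combining yields $-2q < m_i < q$, establishing the first claim.

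For the first-moment bound I will swap summation,
\[
 \sum_\mbf \Acal(\mbf) = \sum_{p\in\Pcal}\sum_{\abf\bmod p}\#\{\mbf : (p,\abf)\text{ is compatible with }\mbf\}.
\]
The inner set is a box of sidelengths $\lfloor H_i/P\rfloor + 1\ll H_i/P$ (using $P\le H_i$), so its cardinality is $\ll \|\Hbf\|/P^n$; combined with $|\Pcal|\ll P$ and $p^n\le (2P)^n$ residue classes $\abf$ for each $p$, the total is $\ll P\|\Hbf\|$.

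The second-moment bound is the main obstacle. I will expand
\[
 \sum_\mbf \Acal(\mbf)^2 = \sum_{(p_1,\abf_1),(p_2,\abf_2)}\#\{\mbf : \text{compatible with both}\}
\]
and split on whether $p_1=p_2$. In the diagonal case $p_1=p_2=p$, overlap of the two $m_i$-intervals requires $|a_{1,i}-a_{2,i}|q/p < H_i/P$, hence $|a_{1,i}-a_{2,i}| < H_ip/(Pq) \le 2H_i/q < 2/P \le 1$ once $P\ge 2$, so $\abf_1=\abf_2$; this contribution collapses to $\sum_\mbf \Acal(\mbf)\ll P\|\Hbf\|$. In the off-diagonal case $p_1\ne p_2$ the coordinate-$i$ overlap condition becomes
\[
 \abs{N_i(p_2-p_1)-q(a_{1,i}p_2 - a_{2,i}p_1)} < H_ip_1p_2/P,
\]
and since $\gcd(p_1,p_2)=1$ the map $(a_{1,i},a_{2,i})\mapsto v:=a_{1,i}p_2 - a_{2,i}p_1$ is injective on $[0,p_1)\times[0,p_2)$ (the value $v$ determines $a_{1,i}\equiv v p_2^{-1}\pmod{p_1}$ uniquely, then $a_{2,i}$). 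The number of admissible pairs is therefore at most the number of integers $v$ with $|qv-N_i(p_2-p_1)|<H_ip_1p_2/P\le 4H_iP<4q$, which is $O(1)$ since an interval of length $<8q$ contains $O(1)$ multiples of $q$. Multiplying over the $n$ coordinates gives $O(1)$ choices of $(\abf_1,\abf_2)$ per ordered prime pair; each resulting quadruple admits $\ll \|\Hbf\|/P^n$ compatible $\mbf$; so the off-diagonal sum is $\ll |\Pcal|^2\cdot \|\Hbf\|/P^n \ll P^{2-n}\|\Hbf\| \le \|\Hbf\|$ for $n\ge 2$. Adding the two contributions yields $\sum_\mbf \Acal(\mbf)^2 \ll P\|\Hbf\|$. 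The crucial ingredient is the CRT-based injectivity of $(a_{1,i},a_{2,i})\mapsto v$: without it a naive expansion would lose an extra factor of $|\Pcal|\ll P$ and fail to match the first-moment estimate.
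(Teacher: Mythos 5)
Your proof is correct. The skeleton --- extracting the box count $\ll\|\Hbf\|/P^{n}$ per pair of data and splitting the second moment into diagonal and off-diagonal prime pairs --- matches the paper's, but your off-diagonal argument is genuinely different and more elementary. The paper (following the Heath-Brown device it cites) invokes Bertrand's postulate to choose auxiliary primes $l_i\in(q/H_i,2q/H_i]$ and integers $M_i$ approximating $N_il_i/q$ with $l_i\nmid M_i$, recasts the overlap condition as $|(p'-p)M_i-(a_ip'-a_i'p)l_i|\ll P$, and uses the resulting congruence modulo $l_i$ to determine $p'-p$ after fixing $O(P^{n})$ auxiliary parameters $\ubf$, arriving at $\ll P^{n+1}$ off-diagonal quadruples. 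You instead fix the $\ll P^{2}$ prime pairs and show the residue data is then essentially forced: the injectivity of $(a_{1,i},a_{2,i})\mapsto a_{1,i}p_2-a_{2,i}p_1$ on $[0,p_1)\times[0,p_2)$, combined with the fact that an interval of length $<8q$ contains $O(1)$ multiples of $q$ (this is exactly where $H_iP<q$ enters), leaves $O(1)$ admissible pairs per coordinate, hence $\ll P^{2}$ off-diagonal quadruples --- sharper than $P^{n+1}$ when $n\ge2$, though this does not improve the lemma since the diagonal already contributes $P\|\Hbf\|$; what it buys is dispensing with the auxiliary primes altogether. Two further points: you correctly make explicit the normalization $0\le N_i<q$, without which the support claim $|m_i|\le 2q$ would be false as literally stated (the paper dismisses this as a direct consequence of the definition), and you prove the first-moment bound by direct counting rather than via the paper's shortcut $\sum_{\mbf}\Acal(\mbf)\le\sum_{\mbf}\Acal(\mbf)^2$. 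The only caveat is the harmless assumption $P\ge2$ in your diagonal step; for $P=1$ one can settle, as the paper does, for $|a_{1,i}-a_{2,i}|\le1$ and a factor of $3^{n}$.
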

Applying H\"{o}lder's inequality twice to (\ref{SFNH_D3}), we obtain 
\[
 |S(F;\Nbf,\Hbf)| \ll | \Pcal|^{-1} \| \Hbf \|^{-1} P^{n} \left(\sum_{\mbf} \Acal(\mbf)\right)^{1-1/r} \left(\sum_{\mbf} \Acal(\mbf)^2 \right)^{1/(2r)} \\
 \left(\sum_{\bstack{\mbf}{ |m_i| \leq 2q}} \max_{\kbf \leq 2\Hbf /P} |S(F;\mbf,\kbf)|^{2r}\right)^{1/(2r)} .
 \]
 Thus applying the results of Lemma \ref{lemma_A_D} shows that 
 \beq\label{SFNH_D4}
  |S(F;\Nbf,\Hbf)| \ll  (\log P) P^{n-1/2r} \| \Hbf \|^{-1/2r} \left(\sum_{\bstack{\mbf}{ |m_i| \leq 2q}} \max_{\kbf \leq 2\Hbf/P} |S(F;\mbf,\kbf)|^{2r}\right)^{1/(2r)} .
 \eeq
 It is sufficient to look at the internal sum over $\mbf$ modulo $q$; in fact obtaining this complete character sum is the main accomplishment of the manipulations up to this point.
 
 We ignore for the moment the maximum over $\kbf \leq 2\Hbf/P$ and focus first on estimating the non-maximal moment.
  We re-write $S(F;\mbf,\kbf)$ as
 \begin{eqnarray*}
  S(F;\mbf,\kbf)
 	= \sum_{\xbf \in (\mbf , \mbf + \kbf]} \chi(F(\xbf))  = \sum_{\xbf \in (\zerobf,\kbf]} \chi(F(\mbf +\xbf)), 
	\end{eqnarray*}
so that upon expansion,
\beq\label{SG}
 \sum_{\mbf \modd{q}} |S(F;\mbf,\kbf)|^{2r}
 =\sum_{\mbf\modd{q}} \left|  \sum_{\xbf \in (\zerobf,\kbf]} \chi(F(\mbf +\xbf)) \right|^{2r}
   \leq \sum_{\bstack{\xbf^{(1)},\ldots,\xbf^{(2r)} }
   {\xbf\up i\in(\zerobf,\kbf]}} \left| \sum_{\mbf \modd{q}} \chi (F_{\{\xbf\}}(\mbf)) \right|,
 \eeq
where $F_{\{\xbf\}}$ is defined in terms of the original form $F$ and the collection $\{\xbf^{(1)},\ldots,\xbf^{(2r)}\}$ by (\ref{dfn_Gpoly}).

\section{Stratification and a Menchov--Rademacher argument}\label{sec_MR}
\subsection{Application of the stratification for character sums}
We now come to a critical novel step, which is to estimate how often we obtain a certain quality of upper bound for the complete character sum
\[ \sum_{\mbf \modd{q}} \chi (F_{\{\xbf\}}(\mbf)).\]
For this, we call upon the stratification of complete character sums stated in Theorem \ref{thm_Xu}.
Let us suppose that $k_1 \leq \ldots \leq k_n \leq q$.
 For each $1 \leq j \leq n$, define 
\[ Y_j:= \left\{ \{\xbf\} 
\in (\zerobf,\kbf]^{2r} : \left|  \sum_{\mbf \modd{q}}
\chi(F_{\{\xbf\}}(\mbf))
\right|  > C q^{(n+j-1)/2} \right\},\]
so that $(\zerobf,\kbf]^{2r} =:Y_0\supset Y_1\supset Y_2\supset\dots\supset Y_n\supset Y_{n+1}:=\varnothing$, and $\#Y_j\leq C'' \norm{\kbf}^{2r}B_{n,r,j}(\kbf)^{-1}$ by Theorem \ref{thm_Xu}. Upon employing the decomposition $(\zerobf,\kbf]^{2r}=\coprod_{j=0}^n Y_j\setminus Y_{j+1}$ in (\ref{SG}) 
 we have
\begin{align}
\sum_{\mbf\modd{q}}\abs{S(F;\mbf,\kbf)}^{2r}
& \le  \sum_{j=0}^n \sum_{\{\xbf\}\in Y_j\setminus Y_{j+1}} \abs*{\sum_{\mbf\modd{q}}\chi(F_{\{\xbf\}}(\mbf))} \label{Y_original}\\
& \le \sum_{j=0}^n (\#Y_j) Cq^{(n+(j+1)-1)/2} \nonumber \\
& \leq C \cdot C'' \|\kbf\|^{2r}\sum_{j=0}^n q^{(n+j)/2}B_{n,r,j}(\kbf)^{-1}. \nonumber
\end{align}

To summarize, we have proved:

\begin{lemma}\label{moment_bound}
Fix  $n,r,D,\Del \geq 1$, a prime $q$, and a multiplicative Dirichlet character $\chi$ of order $\Del$ modulo $q$. Then for all forms $F \in \Z[X_1,\ldots,X_n]$ of degree $D$ that are $(\Del,q)$-admissible, the following holds. For every  $\kbf = (k_1,\ldots, k_n)$ with  $1 \leq k_1\le k_2\le\dots\le k_n \leq q$, then
\beq\label{MR_weak1}
\sum_{{\bf m}\pmod{q}} \abs{S(F;{\bf m,k})}^{2r}\ll_{n,r,D,\Del} \norm{{\bf k}}^{2r} \sum_{j=0}^n q^{(n+j)/2}B_{n,r,j}(\kbf)^{-1},
\eeq 
with $B_{n,r,j}(\kbf)$ as defined in (\ref{Bnrj_dfn}), and with the implicit constant dependent on $n,r,D,\Del$ but independent of $q,\chi,F,\kbf$.
\end{lemma}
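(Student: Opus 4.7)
The plan is to reduce the $2r$-th moment to a sum of complete character sums indexed by $2r$-tuples $\{\xbf\}$, then apply the stratification Theorem \ref{thm_Xu} to control how many such tuples can give large sums.

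First I would open up the moment. Writing $S(F;\mbf,\kbf) = \sum_{\xbf\in(\zerobf,\kbf]}\chi(F(\mbf+\xbf))$ and expanding the $2r$-th power (taking conjugates on the even indices, which replaces $\chi$ by $\overline{\chi}=\chi^{\Del-1}$), one passes the sum over $\mbf \pmod q$ to the inside to obtain
\[
\sum_{\mbf\pmod q}|S(F;\mbf,\kbf)|^{2r}\leq\sum_{\{\xbf\}\in(\zerobf,\kbf]^{2r}}\left|\sum_{\mbf\pmod q}\chi\bigl(F_{\{\xbf\}}(\mbf)\bigr)\right|,
\]
where $F_{\{\xbf\}}$ is exactly the polynomial defined in (\ref{dfn_Gpoly}). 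This step is essentially the display in (\ref{SG}) already recorded in the excerpt.

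Next I would stratify by the size of the inner complete character sum. Following the notation in the text preceding the lemma, let
\[
Y_j=\left\{\{\xbf\}\in(\zerobf,\kbf]^{2r}:\left|\sum_{\mbf\pmod q}\chi(F_{\{\xbf\}}(\mbf))\right|>Cq^{(n+j-1)/2}\right\}
\]
for $0\leq j\leq n$, with $Y_0=(\zerobf,\kbf]^{2r}$ and $Y_{n+1}=\varnothing$. The hypotheses match those of Theorem \ref{thm_Xu} exactly: $F$ is $(\Del,q)$-admissible and $1\leq k_1\leq\cdots\leq k_n\leq q$, so Theorem \ref{thm_Xu} yields $\#Y_j\leq C''\,\|\kbf\|^{2r}B_{n,r,j}(\kbf)^{-1}$ for $1\leq j\leq n$, and one has the trivial bound $\#Y_0\leq\|\kbf\|^{2r}=\|\kbf\|^{2r}B_{n,r,0}(\kbf)^{-1}$.

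Finally I would decompose $(\zerobf,\kbf]^{2r}=\coprod_{j=0}^n(Y_j\setminus Y_{j+1})$ and bound the inner character sum on the piece $Y_j\setminus Y_{j+1}$ by $Cq^{(n+j)/2}$ (this is the defining property of $Y_{j+1}$, with the convention $Y_{n+1}=\varnothing$ handled by the trivial bound $q^n$). Summing gives
\[
\sum_{\mbf\pmod q}|S(F;\mbf,\kbf)|^{2r}\leq\sum_{j=0}^n\#Y_j\cdot Cq^{(n+j)/2}\ll_{n,r,D,\Del}\|\kbf\|^{2r}\sum_{j=0}^n q^{(n+j)/2}B_{n,r,j}(\kbf)^{-1},
\]
which is exactly (\ref{MR_weak1}). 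There is no serious obstacle: all the substantive work has already been invested in Theorem \ref{thm_Xu} (and its predecessor Theorem \ref{thm_F_power}), so the lemma is essentially a bookkeeping step that packages the stratification into the moment bound required for the Burgess argument.
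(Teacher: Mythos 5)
Your proof is correct and coincides with the paper's own argument: the paper proves Lemma \ref{moment_bound} by exactly this decomposition of $(\zerobf,\kbf]^{2r}$ into the sets $Y_j\setminus Y_{j+1}$, bounding $\#Y_j$ via Theorem \ref{thm_Xu} and the inner complete sum by $Cq^{(n+j)/2}$ on each piece. No differences to report.
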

However recall that the actual quantity we must bound in (\ref{SFNH_D4}) is a moment of $|S(F;\mbf,\kbf)|$ that includes a maximum over $\kbf \leq 2\Hbf/P$. To do so, we will employ a Menchov-Rademacher argument.

\subsection{A Menchov-Rademacher argument with permutations}
The  Menchov-Rademacher argument \cite{Men23,Rad22} may be employed in a wide variety of circumstances; in general it allows one to replace a supremum of a function $|f(u_j)|$ over an index set of size $U$ by a sum of differences $|f(u_j) - f(u_{j-1})|$ over an index set of size $O(\log U)$. In our present setting, these differences are differences of partial sums, which are themselves partial sums of the same kind, so that the Menchov-Rademacher device is a useful tool. 

However   a typical Menchov-Rademacher argument would not immediately apply in our case, since we cannot save a power of $\| \kbf\|$ but typically only a power of $k_{\min}$, the shortest side of the box. We see this phenomenon is already present in Lemma \ref{moment_bound}, since we have assumed an ordering $k_1 \leq k_2 \leq \cdots \leq k_n$ for the side-lengths of the box. Even if we assume in the beginning that we have such an ordering, certain internal steps in the Menchov-Rademacher argument do not preserve such an ordering, and thus arranging the argument so that we may apply Lemma \ref{moment_bound} will require delicate considerations of permutations of the variables.

Our main result in this section is the following: 
\begin{prop}\label{prop_MR}
If $1 \leq K_1\le K_2\le\dots\le K_n \leq q$, then
\beq\label{MR_weak2}
\sum_{\substack{{\bf m}\\\abs{m_i}\le2q}}\max_{{\bf k}\le {\bf K}} \abs{S(F;{\bf m,k})}^{2r}\ll_{n,r,D,\Del} \norm{{\bf K}}^{2r} (\log K_n)^{2nr} \sum_{j=0}^n q^{(n+j)/2} \widetilde{B}_{n,r,j}(\Kbf)^{-1}
\eeq 
with the implicit constant dependent on $n,r,D,\Del$ but independent of $q,\chi,\Kbf$, and with
 
\beq\label{tilde_B_dfn}
\widetilde{B}_{n,r,j}(\Kbf) = \begin{cases}
	1 & j=0\\
	K_1^{\theta_j} & j=1,\ldots, n-1\\
	(K_1 \cdots K_{n/2})^{(2r-1)} & j=n, \text{$n$ even} \\
	(K_1 \cdots K_{(n-1)/2})^{(2r-1)} (K_{(n+1)/2})^{r} & j=n, \text{$n$ odd}
	\end{cases} 
\eeq
\end{prop}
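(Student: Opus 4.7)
The plan is a three-step reduction combining an iterated Menchov--Rademacher (MR) maximal inequality with the non-maximal moment bound of Lemma \ref{moment_bound}. The main obstacle, concentrated in the last step, is carefully tracking the exponents that arise when one sums over dyadic scales after the permutation needed to apply Lemma \ref{moment_bound}.

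\emph{Step 1 (iterated MR).} For each coordinate $i$ I would expand $k_i$ in binary with $L_i := \lfloor \log_2 K_i \rfloor$ bits, expressing $S(F;\mbf,\kbf)$ as a signed combination of box-sums of side lengths $(2^{\ell_1},\dots,2^{\ell_n})$. Applying H\"older's inequality with exponent $2r$ in each coordinate successively, then using positivity and the $q$-periodicity of the summand in $\mbf$ to replace a maximum over translates by a sum, should yield
\[
\sum_\mbf \max_{\kbf\le\Kbf} |S(F;\mbf,\kbf)|^{2r}
\ll \|\Kbf\|\,(\log K_n)^{n(2r-1)} \sum_{\ell_1,\dots,\ell_n}\|\kbf'_\ell\|^{-1}\sum_\mbf |S(F;\mbf,\kbf'_\ell)|^{2r},
\]
where $\kbf'_\ell=(2^{\ell_1},\dots,2^{\ell_n})$ with $0\le\ell_i\le L_i$. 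The prefactor $\|\Kbf\|\cdot\|\kbf'_\ell\|^{-1}=\prod_i K_i/2^{\ell_i}$ is the number of dyadic translates in each coordinate.

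\emph{Step 2 (permutation + moment bound).} Because Lemma \ref{moment_bound} requires an ascending ordering of the tuple of side-lengths, I would, for each $\ell$, choose a permutation $\pi=\pi(\ell)$ that sorts $\kbf'_\ell$. By Lemma \ref{lemma_perm} the permuted form $F_\pi$ is still $(\Del,q)$-admissible, and relabeling the summation variable $\mbf$ shows $\sum_\mbf|S(F;\mbf,\kbf'_\ell)|^{2r}=\sum_\mbf|S(F_\pi;\mbf,\tilde\kbf'_\ell)|^{2r}$ with $\tilde\kbf'_\ell$ sorted. Applying Lemma \ref{moment_bound} and swapping the sums over $j$ and $\ell$ then reduces the problem to bounding, for each $0\le j\le n$,
\[
\Sigma_j := \sum_{\ell_1,\dots,\ell_n}\|\kbf'_\ell\|^{2r-1}\,B_{n,r,j}(\tilde\kbf'_\ell)^{-1}.
\]
A preliminary observation I would establish first is that the sorted coordinate $\tilde\kappa'_i$ of any $\kbf'_\ell \le \Kbf$ satisfies $\tilde\kappa'_i \le K_i$, as a consequence of the hypothesis $K_1 \le \cdots \le K_n$: otherwise each of $\kappa'_1,\dots,\kappa'_i$ would be strictly below $\tilde\kappa'_i$, contradicting that $\tilde\kappa'_i$ is the $i$-th smallest entry.

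\emph{Step 3 (dyadic sum estimation).} For $1\le j\le n-1$, where $B_{n,r,j}(\tilde\kbf'_\ell)=(\tilde\kappa'_1)^{\theta_j}$ with $\theta_j\le r-1<2r-1$, I would condition on the index $i^*$ attaining the minimum of $\kbf'_\ell$, sum the geometric series in each $\ell_i$, and then in $\ell_{i^*}$, to obtain $\Sigma_j \ll \|\Kbf\|^{2r-1}\sum_{i^*}K_{i^*}^{-\theta_j}\ll \|\Kbf\|^{2r-1}/K_1^{\theta_j}$. For $j=n$ (taking $n$ even for concreteness), I would rewrite the summand in terms of the sorted $\sigma_1\le\cdots\le\sigma_n$ where $\tilde\kappa'_i = 2^{\sigma_i}$: the exponent of $2^{\sigma_i}$ turns out to be $-1$ for $i\le n/2$ and $2r-1$ for $i>n/2$. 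Parameterizing the dyadic sum by such sorted tuples (with multiplicity at most $n!$, and $\sigma_i\le L_i$), the factors with $i\le n/2$ contribute $O(1)$ via convergent geometric series while the factors with $i>n/2$ each contribute $K_i^{2r-1}$, giving $\Sigma_n\ll\prod_{i>n/2}K_i^{2r-1}=\|\Kbf\|^{2r-1}/(K_1\cdots K_{n/2})^{2r-1}$. For odd $n$ the analogous calculation produces $K_{(n+1)/2}^{r-1}\prod_{i>(n+1)/2}K_i^{2r-1}$, matching the asymmetric form of $\widetilde B_{n,r,n}(\Kbf)$, since at the middle index the exponent is $r-1$ rather than $2r-1$. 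Multiplying by the Step 1 prefactor delivers the proposition, with the total log exponent $n(2r-1)+O(n)$ safely absorbed into $(\log K_n)^{2nr}$.
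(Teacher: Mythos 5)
Your proposal is correct, and its skeleton (dyadic decomposition of the maximizing $\kbf$, H\"older with the $T^{2r-1}$ factor, positivity and $q$-periodicity to pass to complete residue systems, then sorting the dyadic side-lengths via a permutation so that Lemma \ref{moment_bound} applies to the admissible form $F_\pi$) is exactly the paper's. Where you genuinely diverge is the final combinatorial step. The paper bounds the sum over dyadic scales $\delbf$ by $T$ times its largest term and then proves a \emph{pointwise} inequality (Lemma \ref{lemma_MR_savings}) for $\norm{2^{\delbf}}^{-(2r-1)}B_{n,r,j}(2^{\tbf_\sig-\delbf_\sig})^{-1}$, the proof of which rests on a rearrangement inequality (Lemma \ref{lemma_rearrange}) comparing $\sum a_i t_{\sig(i)}$ with $\sum a_i t_i$. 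You instead sum the geometric series over all dyadic scales directly, the key enabling observations being (i) for $1\le j\le n-1$ one has $\theta_j\le r-1<2r-1$, so conditioning on the argmin coordinate leaves convergent (top-term-dominated) geometric series, and (ii) for $j=n$ the sorted $i$-th coordinate of any sub-box of $\Kbf$ is at most $K_i$ (your preliminary observation, which is correct and plays the role the rearrangement inequality plays in the paper), so the reparametrization by sorted tuples with multiplicity $n!$ closes the estimate. Your route trades the paper's clean $\max_{\delbf}$ plus rearrangement lemma for direct summation; it costs at most one extra logarithm (e.g.\ the middle index when $j=n$, $n$ odd, $r=1$, where the exponent $r-1$ vanishes), which, as you note, still fits under $(\log K_n)^{2nr}$ since the H\"older step only spends $n(2r-1)$ powers. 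Both arguments land on the same $\widetilde{B}_{n,r,j}$, including the characteristic loss of one power (from $2r$ to $2r-1$) in the $j=n$ stratum relative to the non-maximal bound.
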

 
Note that $\widetilde{B}_{n,r,n}(\Kbf)^{-1}$ loses one power in decay compared to $B_{n,r,n}(\Kbf)^{-1}$. In contrast, in the one-dimensional case, no decay is lost when passing from the non-maximal estimate to the maximal estimate; this minor loss will not affect our final outcome, and we explain why it arises below.

\subsection{A dyadic decomposition}
It suffices to prove Proposition \ref{prop_MR} for $\Kbf = (K_1,\ldots,K_n)$ where each $K_i$ is a power of $2$, say $K_i=2^{t_i}$, in which case we naturally have $t_1 \leq t_2 \leq \cdots \leq t_n$, under the hypotheses of the proposition. We fix $\mbf$ and suppose that $S(F;\mbf,\kbf)$ assumes its maximum for $\kbf \leq \Kbf$ at $\kbf=(k_1,\ldots,k_n)$, and for each $i$ we decompose 
\[ k_i = \sum_{\del_i \in \Dcal_i} 2^{t_i-\del_i},\]
 where $\Dcal_i$ is a set of distinct non-negative integers $\del_i \leq t_i$.
 Let $\delbf=(\del_1,\ldots,\del_n)$ denote a tuple with $\del_i \in \Dcal_i$ for each $i$. Let $\tbf=(t_1,\ldots,t_n)$ and define  the notation $2^{\tbf-\delbf} = (2^{t_1-\del_1},\ldots, 2^{t_n-\del_n})$. Furthermore let $\Vbf_{\mbf,\delbf}$ be an $n$-tuple defined such that the  $i$-th entry is $(\Vbf_{\mbf,\delbf})_i  = 2^{t_i-\del_i}v_{\mbf,\delbf,i}$, where
\[ v_{\mbf,\delbf,i}  = \sum_{e_i \in \Dcal_i, e_i < \del_i} 2^{\del_i - e_i} < 2^{\del_i}.\]
Then we may express
\[ S(F;\mbf,\kbf)
	 = \sum_{\delbf, \del_i \in \Dcal_i} S(F; \mbf+ \Vbf_{\mbf,\delbf}, 2^{\tbf-\delbf}).\]
After taking absolute values, we can obtain an upper bound by increasing the sum to run over all tuples $\delbf$ with $\del_i \leq t_i$. We get
\[ |S(F;\mbf,\kbf)|
	  \leq  \sum_{\bstack{\delbf}{0 \leq \del_i \leq t_i}} |S(F; \mbf+ \Vbf_{\mbf,\delbf}, 2^{\tbf-\delbf})|.
\]
Then by H\"{o}lder's inequality, 
\[ |S(F;\mbf,\kbf)|^{2r}  \leq  (\prod_{i=1}^n(t_i+1))^{2r-1}\sum_{\bstack{\delbf}{0 \leq \del_i \leq t_i}} \sum_{\bstack{\vbf}{0 \leq v_i < 2^{\del_i}}} 
 	|S(F; \mbf+ 2^{\tbf-\delbf} \cdot \vbf, 2^{\tbf-\delbf})|^{2r}, \]
where we have possibly enlarged the right hand side by summing over all possible values for $v_{\mbf,\delbf,i}$ up to $2^{\del_i}$. 
Recalling that for each $\mbf$ we chose $\kbf$ above to be the length at which the maximum is attained, we then have
\begin{align*}
 \sum_{|m| \leq 2q} \max_{\kbf  \leq 2\Kbf} |S(F;\mbf,\kbf)|^{2r} 
&	\leq   T^{2r-1}\sum_{\bstack{\delbf}{0 \leq \del_i \leq t_i}} \sum_{\bstack{\vbf}{0 \leq v_i < 2^{\del_i}}} \sum_{\bstack{\mbf}{|m_i| \leq 2q}} |S(F; \mbf+ 2^{\tbf-\delbf} \cdot \vbf, 2^{\tbf - \delbf})|^{2r}\\
&	\ll  T^{2r-1} \sum_{\bstack{\delbf}{0 \leq \del_i \leq t_i}} \norm{2^{\delbf}} \sum_{\mbf\modd{q}} \abs{S(F;\mbf,2^{\tbf-\delbf})}^{2r},
\end{align*}
where $T:=\prod_{i=1}^n(t_i+1)$.
 Now we perform the key step that accommodates the  fact that we only make a savings in the smallest direction of the box.
  
\subsection{Application of the non-maximal upper bound}
We would like to apply Lemma \ref{moment_bound} to the innermost sums over $\mbf$. Fix $\delbf$ (with $0 \leq \del_i \leq t_i$ for each $i$).
Notice that we may not have $t_1-\delta_1\le t_2-\delta_2\le \dots\le t_n-\delta_n$ even though $t_1\le t_2\le\dots\le t_n$, so we may need to reorder $\tbf-\delbf$ before applying   Lemma \ref{moment_bound}.
Let $\sigma$ be a permutation of $\{1,2,\dots,n\}$ (depending on $\delbf$) such that 
\beq\label{t_d_growing}
t_{\sigma(1)}-\delta_{\sigma(1)}\le t_{\sigma(2)}-\delta_{\sigma(2)}\le\dots\le t_{\sigma(n)}-\delta_{\sigma(n)}.
\eeq
Given an $n$-tuple $\xbf$, let $\xbf_\sig = (x_{\sig(1)},x_{\sig(2)},\ldots,x_{\sig(n)})$, and   $2^{\tbf_\sig - \delbf_\sig} = (2^{t_{\sig(1)} - \del_{\sig(1)}}, \ldots, 2^{t_{\sig(n)} - \del_{\sig(n)}})$. Recall the discussion on permutations of variables in Section \ref{sec_perm}; for any permutation $\pi$ of $\{1,\ldots, n\}$ define the form $F_\pi(\Xbf)$  by setting
$F_\pi(X_1,\ldots, X_n) = F(X_{\pi(1)},\ldots, X_{\pi(n)}).$ 

Letting $\sig^{-1}$ be the permutation inverse of $\sig$,  then
\[ S(F; \mbf,2^{\tbf-\delbf}) = \sum_{\xbf \in (\mbf,\mbf+2^{\tbf - \delbf})} \chi(F(\xbf)) = 
 \sum_{\xbf_\sig \in (\mbf_\sig, \mbf_\sig + 2^{\tbf_\sig - \delbf_\sig})} \chi( F_{\sig^{-1}}(\xbf_\sig))
 	 = S(F_{\sig^{-1}};\mbf_\sig,2^{\tbf_\sig - \delbf_\sig}). \]
Note that $\mbf_\sig$ ranges over all $n$-tuples with coordinates modulo $q$ as $\mbf$ does, and that the last argument in $S(F_{\sig^{-1}};\mbf_\sig,2^{\tbf_\sig - \delbf_\sig})$ satisfies the requirement  (\ref{t_d_growing}) so that we may apply Lemma \ref{moment_bound} with $n,r,D,\Del,q,\chi$ as before but now to the form $F_{\sig^{-1}}$; here we use the uniformity of Lemma \ref{moment_bound} with respect to the form. (We recall from Lemma \ref{lemma_perm} that $F_{\sig^{-1}}$ is $(\Del,q)$-admissible if and only if $F$ is.) 
We may conclude (using the fact that   $ \|2^{\tbf_\sig - \delbf_\sig}\| = \|2^{\tbf - \delbf}\| =\|2^{\tbf}\| \cdot \|2^{\delbf}\|^{-1}$) that 

\begin{align}
 \sum_{|m| \leq 2q} \max_{\kbf  \leq \Kbf} |S(F;\mbf,\kbf)|^{2r} 
&	\ll_{n,r,D,\Del} T^{2r-1}\sum_{\bstack{\delbf}{0 \leq \del_i \leq t_i}} \norm{2^{\delbf}} \norm{2^{\tbf_\sig-\delbf_\sig}}^{2r} \sum_{j=0}^n q^{(n+j)/2}B_{n,r,j}(2^{\tbf_\sig-\delbf_\sig})^{-1} \nonumber \\
 & =  T^{2r-1}\sum_{\bstack{\delbf}{0 \leq \del_i \leq t_i}} \norm{2^\tbf}^{2r} \norm{2^{\delbf}}^{-(2r-1)}\sum_{j=0}^n q^{(n+j)/2}B_{n,r,j}(2^{\tbf_\sig-\delbf_\sig})^{-1}\nonumber  \\
 & =  T^{2r-1}\norm{2^\tbf}^{2r}\sum_{j=0}^n q^{(n+j)/2}  \sum_{\bstack{\delbf}{0 \leq \del_i \leq t_i}} \norm{2^{\delbf}}^{-(2r-1)}B_{n,r,j}(2^{\tbf_\sig-\delbf_\sig})^{-1} \nonumber \\
 & \le  T^{2r}\norm{\Kbf}^{2r}\sum_{j=0}^n q^{(n+j)/2}  \max_{\bstack{\delbf}{0\le \del_i\le t_i}}  \left\{ \norm{2^{\delbf}}^{-(2r-1)}B_{n,r,j}(2^{\tbf_\sig-\delbf_\sig})^{-1} \right\}.\label{S_K_MR_upper}
  \end{align}
  Note that in the case of dimension $n=1$ (and $\theta_1=r$), 
  the sum over $j$ is comprised of the two terms
  \[ q^{n/2} 2^{-\del(2r-1)} + q^{n}2^{-\del(2r-1)}(2^{t-\del})^{-r} \leq q^{n/2} + q^{n} 2^{-rt} = q^{n/2} + q^{n} K^{-r},\]
  familiar from the classical 1-dimensional Burgess argument.
  
Now in general for $n \geq 2$ we must re-interpret $B_{n,r,j}(2^{\tbf_\sig-\delbf_\sig})^{-1}$ in terms of the coordinates of $\Kbf = 2^\tbf$, in which we recall that $t_1 \leq t_2 \leq \cdots \leq t_n$; this argument is more complicated, and in particular for $j=n$ we will get a positive power of $2^{\del}$ we cannot ignore as  in the case of $n=1$ (e.g. compare to the top line of \cite[page  204]{HB13}). We summarize the necessary result:

  \begin{lemma}\label{lemma_MR_savings}
For $\Kbf = 2^{\tbf}$ with $t_1 \leq \cdots \leq t_n$, for each $\delbf \leq \tbf$, let $\sig$ be a permutation of indices such that $t_{\sig(1)} - \del_{\sig(1)} \leq \cdots \leq t_{\sig(n)} - \del_{\sig(n)}$. Then
\[  \norm{2^{\delbf}}^{-(2r-1)}B_{n,r,j}(2^{\tbf_\sig-\delbf_\sig})^{-1}  \leq 
	\begin{cases}
	1 & j=0 \\
	K_1^{-\theta_j} & j=1,\ldots, n-1\\
	(K_1 \cdots K_{n/2})^{-(2r-1)} & j=n, \text{$n$ even} \\
	(K_1 \cdots K_{(n-1)/2})^{-(2r-1)}K_{(n+1)/2}^{-r} & j=n, \text{$n$ odd}.\\
	\end{cases} \]
  \end{lemma}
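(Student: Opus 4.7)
The plan is to take logarithms base $2$, reducing the multiplicative inequality to a family of linear inequalities among the integers $t_i$, $\del_i$, and the sorted sequence $s_i := t_{\sigma(i)} - \del_{\sigma(i)}$, where $s_1 \leq \cdots \leq s_n$. Set $|\delbf| := \sum_i \del_i$ (so $\norm{2^{\delbf}} = 2^{|\delbf|}$). The main tool throughout is the elementary rearrangement fact: for any subset $A \subset \{1,\ldots,n\}$ of size $k$, $\sum_{i \in A} t_i \geq t_1 + \cdots + t_k$, since $t_1 \leq \cdots \leq t_n$.

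The case $j = 0$ is immediate from $\norm{2^{\delbf}}^{-(2r-1)} \leq 1$. For $1 \leq j \leq n-1$, the target reduces (after taking logs) to $(2r-1)|\delbf| \geq (t_1 - s_1)\theta_j$; it is trivial when $s_1 \geq t_1$, and otherwise follows from $|\delbf| \geq \del_{\sigma(1)} = t_{\sigma(1)} - s_1 \geq t_1 - s_1$ together with $\theta_j \leq \theta_{n-1} = r - 1 \leq 2r-1$ (noting $\theta_j = j\lfloor (r-1)/(n-1)\rfloor < r-1$ for $j \leq n-2$).

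For $j = n$, set $J := \{\sigma(1), \ldots, \sigma(\lfloor n/2\rfloor)\}$, so $|J| = \lfloor n/2 \rfloor$ and $\sum_{i \in J} t_i \geq t_1 + \cdots + t_{\lfloor n/2 \rfloor}$. When $n$ is even, with $T := t_1+\cdots+t_{n/2}$ and $S := s_1+\cdots+s_{n/2} = \sum_{i\in J}(t_i - \del_i)$, we need $(2r-1)|\delbf| + 2rS \geq (2r-1)T$. A short manipulation gives
\[
(2r-1)|\delbf| + 2rS \;=\; (2r-1)\sum_{i \notin J}\del_i \;+\; \sum_{i\in J}(2rt_i - \del_i),
\]
and then $2rt_i - \del_i \geq (2r-1)t_i$ (since $\del_i \leq t_i$) combined with $\sum_{i \in J} t_i \geq T$ closes the even case in one shot.

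For $n$ odd, set $m := (n-1)/2$, $T' := t_1 + \cdots + t_m$, $S' := s_1 + \cdots + s_m$, and $j_0 := \sigma(m+1)$, so $s_{m+1} = t_{j_0} - \del_{j_0}$. The target is $(2r-1)|\delbf| + 2rS' + r s_{m+1} \geq (2r-1) T' + r t_{m+1}$, and the analogous rearrangement bounds the left-hand side from below by $(2r-1)\sum_{i \in J} t_i + (r-1)\del_{j_0} + r t_{j_0}$. A case split on $j_0$ finishes: if $j_0 \geq m+1$ then $t_{j_0} \geq t_{m+1}$ and $\sum_{i \in J} t_i \geq T'$ conclude directly; if $j_0 \leq m$ one uses that $J \cup \{j_0\}$ has size $m+1$ to get $\sum_{i \in J} t_i \geq T' + t_{m+1} - t_{j_0}$, after which the required inequality reduces to $(r-1)(t_{m+1} - t_{j_0}) \geq 0$, which holds since $t_{j_0} \leq t_m \leq t_{m+1}$. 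The main obstacle is this odd case, where the mismatched coefficients $2r-1$, $2r$, and $r$ force the ``sum of $m+1$ smallest $t_i$'' bound to mesh with the extra $r t_{j_0}$ term, rather than permitting the clean one-line rearrangement that suffices in the even case.
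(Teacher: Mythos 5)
Your proof is correct and follows essentially the same route as the paper's: take logarithms, use $0 \leq \del_i \leq t_i$ to eliminate the $\del$-terms with the appropriate sign, and then compare $\sum_{i \in J} t_i$ against the sum of the $|J|$ smallest $t_i$. The only difference is in the odd $j=n$ case, where the paper invokes a weighted rearrangement inequality (its Lemma on rearrangement, with coefficients $2r-1,\dots,2r-1,r$) while you prove the needed instance directly by the case split on $j_0 = \sigma(m+1)$; both are valid.
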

  Once we apply this lemma to (\ref{S_K_MR_upper}), upon noting that $T^{2r} \leq 2^{2nr} (\log_2 K_n)^{2nr}$
we have proved Proposition \ref{prop_MR}. 

\subsection{Proof of Lemma \ref{lemma_MR_savings}: rearrangement}
We recall the definition of $B_{n,r,j}(\kbf)$ in (\ref{Bnrj_dfn}) so that for a tuple $\kbf = (k_1,\ldots,k_n)$ with $k_1 \leq k_2 \leq \cdots \leq k_n$,
   $B_{n,r,j}(\kbf)=1$ for $j=0$,
  $B_{n,r,j}(\kbf) = k_1^{\theta_j}$ for $j=1,\ldots, n-1$, and for $j=n$ we have 
  $B_{n,r,j}(\kbf) = (k_1 \cdots k_{n/2})^{2r}$ if $n$ is even and 
    $B_{n,r,j}(\kbf) = (k_1 \cdots k_{(n-1)/2})^{2r} k_{(n+1)/2}^{r}$ if $n$ is odd.
    
    We may quickly dispatch the case $j=0$, in which case
    \[  \norm{2^{\delbf}}^{-(2r-1)}B_{n,r,j}(2^{\tbf_\sig-\delbf_\sig})^{-1}  
    	 =  \norm{2^{\delbf}}^{-(2r-1)}  \leq 1.\]
    For the remaining cases of $j \geq 1$, it is helpful to invert, and take the logarithm, and prove for fixed $\delbf$ and fixed $j$ a lower bound for the quantity
\[  \log_2\parens*{\norm{2^{\delbf}}^{(2r-1)}B_{n,r,j}(2^{\tbf_\sig-\delbf_\sig})} .\]
First we consider the case of $1 \leq j \leq n-1$; for each of these $j$ (using the fact that $\sum_i \del_{\sig(i)} = \sum_i \del_i$),
\begin{align*}
  \log_2\parens*{\norm{2^{\delbf}}^{(2r-1)}B_{n,r,j}(2^{\tbf_\sig-\delbf_\sig})} 
& = (2r-1)\sum_{i=1}^n\delta_{\sigma(i)} + \theta_j(t_{\sigma(1)}-\delta_{\sigma(1)}) \\
& =  \theta_j t_{\sigma(1)} + (2r-1 - \theta_j) \del_{\sig(1)} + (2r-1) \sum_{i=2}^n \del_{\sig(i)} \geq \theta_j t_{\sigma(1)}.
\end{align*}
Here we used that for $1 \leq j \leq n-1$ we have $\theta_j \leq r-1$, and moreover $\del_{\sig(i)} \geq 0$ for all $1 \leq i \leq n$. Thus we have 
 \[ \norm{2^{\delbf}}^{-(2r-1)}B_{n,r,j}(2^{\tbf_\sig-\delbf_\sig})^{-1} \leq 2^{-\theta_j t_{\sig(1)}} \leq 2^{-\theta_j t_1} = K_1^{-\theta_j},
 \]
 upon recalling that $t_1 \leq t_2 \leq \cdots \leq t_n$. 
 
 Now we turn to the more complicated case of $j=n$. First we assume  that $n$ is even. Now we have
\[
  \log_2\parens*{\norm{2^{\delbf}}^{(2r-1)}B_{n,r,j}(2^{\tbf_\sig-\delbf_\sig})} 
 = (2r-1)\sum_{i=1}^n\delta_{\sigma(i)} +  2r \sum_{i=1}^{n/2}(t_{\sigma(i)}-\delta_{\sigma(i)}) .\]
It is convenient to set temporarily for each $i=1,\ldots,n$ the parameter $\Theta_{n,i}  = 2r$ if $1 \leq i \leq n/2$ and $\Theta_{n,i} = 0$ if $n/2 < i \leq n$. Then upon recalling that  each  $\del_i \leq t_i$, we have
 \begin{align}
  \log_2\parens*{\norm{2^{\delbf}}^{(2r-1)}B_{n,r,j}(2^{\tbf_\sig-\delbf_\sig})} 
  &= (2r-1)\sum_{i=1}^n\delta_{\sigma(i)} + \sum_{i=1}^{n}\Theta_{n,i}(t_{\sigma(i)}-\delta_{\sigma(i)}) \nonumber \\
& = \sum_{i=1}^{n}\Theta_{n,i}t_{\sigma(i)} + \sum_{i=1}^n(2r-1-\Theta_{n,i})\delta_{\sigma(i)} \nonumber \\
& \geq \sum_{i=1}^{n}\Theta_{n,i}t_{\sigma(i)} + \sum_{\bstack{i=1}{2r - 1 - \Theta_{n,i}<0}}^n(2r-1-\Theta_{n,i})t_{\sigma(i)}+
	\sum_{\bstack{i=1}{2r - 1 - \Theta_{n,i} \geq 0}}^n(2r-1-\Theta_{n,i})0 \nonumber  \\
	& = \sum_{\bstack{i=1}{2r - 1 - \Theta_{n,i}<0}}^n(2r-1)t_{\sigma(i)}+
	\sum_{\bstack{i=1}{2r - 1 - \Theta_{n,i} \geq 0}}^n\Theta_{n,i} t_{\sig(i)}. \label{argument_t}
\end{align}
(Here in the inequality, equality can occur for those $\delbf$ such that $\del_i = t_i$ for all $1 \leq i \leq n/2$ and $\del_i = 0$ for $n/2< i \leq n$.
The inequality is where we see that if $\Theta_{n,i}=2r$, we must replace $\del_{\sig(i)}$ by $t_{\sig(i)}$ rather than by $0$; this is why in the final statement of the inequality we lose slightly in the maximal moment, compared to the non-maximal moment. This effect is not possible in dimension $n=1$.)
 Now by the definition of $\Theta_{n,i}$, the first sum is over $1 \leq i \leq n/2$; the second sum is over $n/2 < i \leq n$, in which range $\Theta_{n,i}=0$ so that the second sum is vacuous. Now we use the following simple observation.
 \begin{lemma}[Rearrangement inequality]\label{lemma_rearrange}
 Let $t_1 \leq t_2 \leq \cdots \leq t_n$ be a fixed non-decreasing sequence of real numbers and $a_1 \geq \cdots \geq a_n$ a fixed non-increasing sequence of real numbers. Then for any permutation $\sigma$ on $\{1,\ldots, n\}$, and for any $1 \leq M \leq n$,
  \beq\label{sum_rearrangement_with_coeffs}
    \sum_{i=1}^{M}a_i t_{i} \leq  \sum_{i=1}^{M}a_i t_{\sigma(i)}.
  \eeq
  \end{lemma}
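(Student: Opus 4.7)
The plan is to split the inequality into two separate rearrangement-type comparisons, corresponding respectively to the choice of which $M$ values of the $t$-sequence appear on the right-hand side and to how those values are paired with $a_1,\ldots,a_M$. First I would fix the permutation $\sigma$ and record the set of indices it uses on $\{1,\ldots,M\}$: let $s_1<s_2<\cdots<s_M$ be the elements of $\{\sigma(1),\sigma(2),\ldots,\sigma(M)\}\subset\{1,\ldots,n\}$ listed in increasing order, so that $\sigma|_{\{1,\ldots,M\}}$ is a bijection onto $\{s_1,\ldots,s_M\}$.

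The first step is to invoke the classical (full-sum) rearrangement inequality applied to the two length-$M$ sequences $a_1\ge a_2\ge\cdots\ge a_M$ and $t_{s_1}\le t_{s_2}\le\cdots\le t_{s_M}$. Since pairing the largest $a_i$ with the smallest $t_{s_j}$ minimizes the sum over all bijections between these two sequences, one obtains
\[
\sum_{i=1}^M a_i t_{\sigma(i)}\;\ge\;\sum_{i=1}^M a_i t_{s_i}.
\]

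The second step compares the right-hand side above with the identity case. Because $s_1,\ldots,s_M$ are $M$ distinct elements of $\{1,\ldots,n\}$ listed in increasing order, one has $s_i\ge i$ for each $1\le i\le M$, and monotonicity of the $t$-sequence gives $t_{s_i}\ge t_i$. Using that the coefficients are nonnegative—which holds throughout the invocations of the lemma inside the proof of Lemma~\ref{lemma_MR_savings}, where $a_i=\Theta_{n,i}\in\{0,2r\}$—this inequality multiplies term by term into
\[
\sum_{i=1}^M a_i t_{s_i}\;\ge\;\sum_{i=1}^M a_i t_i,
\]
and chaining with the previous display produces (\ref{sum_rearrangement_with_coeffs}).

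The main point to be careful about is that the second step genuinely requires $a_i\ge 0$: for $M<n$ and coefficients of mixed sign, choosing $s_i$ large could decrease $a_i t_{s_i}$ below $a_i t_i$, and the inequality would be reversed. Since the application uses only nonnegative weights $\Theta_{n,i}$, this presents no obstacle. If one preferred a unified argument avoiding the classical rearrangement inequality as a black box, the same conclusion can be reached by the standard adjacent-transposition calculation: swapping $\sigma(i)$ and $\sigma(i+1)$ changes the sum by $(a_i-a_{i+1})(t_{\sigma(i+1)}-t_{\sigma(i)})$, which is $\le 0$ whenever the sort is out of order, so $\sigma|_{\{1,\ldots,M\}}$ may be sorted into increasing order without increasing the sum, after which $\sigma(i)\ge i$ and nonnegativity of $a_i$ concludes.
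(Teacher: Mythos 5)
Your proof is correct, and it is organized differently from the paper's. The paper first reduces to the case $M=n$ by arguing that any index $i\le M$ with $\sigma(i)>M$ ``contributes no more to the left-hand side than to the right-hand side, since $t_i\le t_{\sigma(i)}$,'' and then proves the full-sum rearrangement inequality from scratch via a minimality-plus-exchange argument (choose the minimizing permutation with the most fixed points and show it must be the identity). You instead split the comparison into two cleaner pieces: the classical rearrangement inequality applied to the image set $\{s_1<\cdots<s_M\}=\sigma(\{1,\ldots,M\})$, which handles the pairing and needs no sign condition, followed by the observation $s_i\ge i$, hence $t_{s_i}\ge t_i$, which handles the choice of which $t$-values appear and is where nonnegativity of the $a_i$ enters. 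Your version is arguably tighter on the truncation issue: the paper's reduction to permutations preserving $\{1,\ldots,M\}$ is stated somewhat loosely (after discarding the indices with $\sigma(i)>M$, the restriction of $\sigma$ to the remaining indices is not yet a permutation of $\{1,\ldots,M\}$, so one still has to reassign targets), whereas your $s_i\ge i$ step disposes of this completely. You are also right to flag that the lemma as stated, with no sign hypothesis on the $a_i$, is false for $M<n$ (e.g.\ $n=2$, $M=1$, $t=(0,1)$, $a=(-1,-2)$, $\sigma$ the transposition); the paper's own reduction step silently uses $a_i\ge 0$ in exactly the same place you do, and the hypothesis is harmless in the only application, where the coefficients are $2r-1$, $r$, or $0$. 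Your closing adjacent-transposition remark is a fine self-contained substitute for citing the classical inequality.
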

 
 This is a variant of a standard rearrangement inequality; for completeness we give a brief proof in \S \ref{sec_tech}.
 Applying this observation in (\ref{argument_t}) with $M=n/2$, we have shown that 
\[  \log_2\parens*{\norm{2^{\delbf}}^{(2r-1)}B_{n,r,j}(2^{\tbf_\sig-\delbf_\sig})}  \geq (2r-1) \sum_{i=1}^{n/2} t_i,\]
so that in the case of $j=n$ even,
 \[ \norm{2^{\delbf}}^{-(2r-1)}B_{n,r,j}(2^{\tbf_\sig-\delbf_\sig})^{-1} \leq (2^{t_1}\cdots2^{t_{n/2}})^{-(2r-1)} = (K_1 \cdots K_{n/2})^{-(2r-1)}.
 \]
 
 The argument is similar for $j=n$ with $n$ odd, and we only specify the necessary changes, starting with 
\[
  \log_2\parens*{\norm{2^{\delbf}}^{(2r-1)}B_{n,r,j}(2^{\tbf_\sig-\delbf_\sig})} 
 = (2r-1)\sum_{i=1}^n\delta_{\sigma(i)} +  2r \sum_{i=1}^{(n-1)/2}(t_{\sigma(i)}-\delta_{\sigma(i)}) +r(t_{\sig(\frac{n+1}{2})} - \del_{\sig(\frac{n+1}{2})}) .\]
It is convenient to set temporarily for each $i=1,\ldots,n$ the parameter $\Theta_{n,i}  = 2r$ if $1 \leq i \leq (n-1)/2$, $\Theta_{n,\frac{n+1}{2}} = r $ and $\Theta_{n,i} = 0$ if $(n+1)/2 < i \leq n$. With this notation, the argument then proceeds as before, until we reach the statement of (\ref{argument_t}), which now holds with this new definition of $\Theta_{n,i}$. Now the first sum on the right-hand side of (\ref{argument_t}) is over $1 \leq i \leq (n-1)/2$, while the second sum on the right-hand side is over $i \geq (n+1)/2$, and has its only non-zero contribution coming from $i = (n+1)/2$. We may conclude that 
\[ 
  \log_2\parens*{\norm{2^{\delbf}}^{(2r-1)}B_{n,r,j}(2^{\tbf_\sig-\delbf_\sig})} 
  \geq 
  \sum_{i=1}^{(n-1)/2} (2r-1)t_{\sigma(i)}+
	r t_{\sig((n+1)/2)}.
  \]
  We now apply  (\ref{sum_rearrangement_with_coeffs}) from Lemma \ref{lemma_rearrange} to conclude that 
  \[ 
  \log_2\parens*{\norm{2^{\delbf}}^{(2r-1)}B_{n,r,j}(2^{\tbf_\sig-\delbf_\sig})} 
  \geq 
  \sum_{i=1}^{(n-1)/2} (2r-1)t_{i}+
	r t_{(n+1)/2}, 
  \]
  or equivalently,
 \[ \norm{2^{\delbf}}^{-(2r-1)}B_{n,r,j}(2^{\tbf_\sig-\delbf_\sig})^{-1} \leq (2^{t_1}\cdots2^{t_{(n-1)/2}})^{-(2r-1)} (2^{t_{(n+1)/2}})^{-r}= (K_1 \cdots K_{(n-1)/2})^{-(2r-1)} K_{(n+1)/2}^{-r}.
 \]
 This completes the proof of Lemma \ref{lemma_MR_savings}.

\section{Conclusion of the Burgess argument}\label{sec_Burgess_conclude}

We now apply Proposition \ref{prop_MR} to (\ref{SFNH_D4}) with $\Kbf = 2\Hbf/P$, recalling that we are working under the assumption (\ref{H_ordering}) that $H_1\leq H_2 \leq \cdots \leq H_n$. (Also recall that $K_n \leq H_n < q$.) We conclude that 
\begin{eqnarray} 
 |S(F; \Nbf,\Hbf)|^{2r} &\ll_{n,r,\Del,D} & (\log q)^{2r(n+1)} P^{2nr-1} \| \Hbf \|^{-1} \parens*{ \frac{\norm{\Hbf}}{P^n}}^{2r} \sum_{j=0}^n q^{(n+j)/2}\widetilde{B}_{n,r,j}(\Hbf/P)^{-1}  \nonumber\\
& \le & (\log q)^{2r(n+1)}  \norm{\Hbf}^{2r-1}P^{-1}q^{n/2} \sum_{j=0}^n q^{j/2}\widetilde{B}_{n,r,j}(\Hbf/P)^{-1},\label{concluding}
\end{eqnarray}
in which we recall the definition of $\widetilde{B}_{n,r,j}(\cdot)$ from (\ref{tilde_B_dfn}).

At this stage of the Burgess argument in the one-dimensional setting $n=1$, one knows that $\theta_0=0$ and $\theta_1=r$, so that 
the sum over $j \in \{0,1\}$ contributes $(1+ q^{1/2} (H_1/P)^{-r})$.
To balance this, we would then choose $P$ to be an integer with 
\beq\label{P_choice_1}
 (1/2) H_1q^{-1/(2r)} \leq P \leq H_1 q^{-1/(2r)},
 \eeq
 which will appear familiar to experts.
Thus when $n=1$, we recover 
\[  |S(F; \Nbf,\Hbf)| \ll  \| \Hbf \|^{1 - 1/r} q^{ \frac{r + 1}{4r^2}}\log q ,\]
which agrees with Burgess's statement (\ref{Burgess_1}).
Now for $n \geq 2$, we observe:

\begin{lemma}\label{lemma_bound_sum}
For $1 \leq K_1 \leq K_2 \leq \cdots \leq K_n$,
\[ \sum_{j=0}^n q^{j/2} \widetilde{B}_{n,r,j}(\Kbf)^{-1} \ll_{n,r} 1
\]
precisely when 
\beq\label{K_relation}
q^{1/2}K_1^{-\theta_1} \leq 1.
\eeq
Under this assumption, the sum over $j$ is dominated by the terms with $j=0,1$.
\end{lemma}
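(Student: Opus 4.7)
The plan is to set $x = q^{1/2} K_1^{-\theta_1}$ and show that each term $q^{j/2}\widetilde{B}_{n,r,j}(\Kbf)^{-1}$ for $j\geq 1$ is bounded by $x^j$, which immediately yields both directions of the equivalence and the claimed dominance by the $j=0,1$ terms.

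First I would handle the ``generic'' range $1 \leq j \leq n-2$, where $\widetilde{B}_{n,r,j}(\Kbf) = K_1^{j\theta_1}$ by definition, so the $j$-th term equals exactly $x^j$. For $j = n-1$, we have $\widetilde{B}_{n,r,n-1}(\Kbf) = K_1^{r-1}$, and since $r-1 \geq (n-1)\theta_1$ (by the definition $\theta_1 = \lfloor (r-1)/(n-1)\rfloor$ and $K_1 \geq 1$), this term is at most $x^{n-1}$.

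The main (slightly more delicate) step is the case $j=n$, where $\widetilde{B}_{n,r,n}(\Kbf)$ involves the higher indices $K_2,\dots,K_{\lceil n/2\rceil}$. Here I would exploit the ordering $K_1 \leq K_2 \leq \cdots$ to bound each of those factors below by $K_1$, reducing $\widetilde{B}_{n,r,n}(\Kbf)$ to a pure power of $K_1$: for $n$ even, $\widetilde{B}_{n,r,n}(\Kbf)\geq K_1^{(n/2)(2r-1)}$, and for $n$ odd, $\widetilde{B}_{n,r,n}(\Kbf)\geq K_1^{((n-1)/2)(2r-1)+r}$. To show the resulting bound is $\leq x^n$, it suffices to verify that the exponent of $K_1$ is at least $n\theta_1$. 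For $n$ even this reduces to $\theta_1 \leq r - 1/2$, which is trivially implied by $\theta_1 \leq r-1$; for $n$ odd it reduces to $\theta_1 \leq r - 1/2 + 1/(2n)$, which is implied by $\theta_1 \leq (r-1)/(n-1) \leq r-1$ for $n\geq 2$. I do not expect any genuine obstacle here — it is essentially an exponent bookkeeping — and the worst case is simply $n$ odd, where the extra half-integer in the exponent has to be absorbed by the freely available slack.

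Combining these estimates, the sum is bounded by $1 + \sum_{j=1}^n x^j$. If $x \leq 1$, then $x^j \leq x$ for $j \geq 1$, so the sum is $\leq 1 + nx \leq n+1$ and the contribution of $j \geq 2$ is $\leq (n-1)x \leq (n-1)(1 + x^{1/2}\cdot x^{1/2})$, confirming dominance by the $j=0,1$ terms. Conversely, since the full sum exceeds its $j=1$ term $x$, any bound of the form $\ll_{n,r} 1$ forces $x \ll_{n,r} 1$, which is the content of (\ref{K_relation}) up to absorbing the constant into the implied constant of the lemma.
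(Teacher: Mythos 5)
Your proposal is correct and follows essentially the same route as the paper: both bound each term by $x^j$ with $x=q^{1/2}K_1^{-\theta_1}$, using $\theta_j=j\theta_1$ for $1\le j\le n-2$, the inequality $\theta_{n-1}=r-1\ge(n-1)\theta_1$, and (for $j=n$) the ordering $K_1\le\cdots\le K_n$ together with the same exponent bookkeeping $(2r-1)(n/2)\ge n\theta_1$ (resp.\ $(2r-1)(n-1)/2+r\ge n\theta_1$). The treatment of the converse via the $j=1$ term matches the paper's as well.
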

In particular, this lemma (whose proof we defer to \S \ref{sec_tech}) shows that the sum is $\gg 1$ if the relation (\ref{K_relation}) does not hold; hence it is advantageous to assume (\ref{K_relation}). Under this assumption, we can dominate the sum by the terms with $j=0,1$ and hence we conclude from (\ref{concluding}) that
\[ |S(F; \Nbf,\Hbf)|^{2r} \ll_{n,r,\Del,D} (\log q)^{2r(n+1)}  \norm{\Hbf}^{2r-1}P^{-1}q^{n/2}  ( 1+ q^{1/2}(H_1/P)^{-\theta_1}).\]
To balance the last two terms within parentheses, we choose $P$ to be an integer with 
\beq\label{P_choice_n}
  \frac{H_1}{2q^{1/(2\theta_1)}} \leq P \leq \frac{H_1}{q^{1/(2\theta_1)}} ,
  \eeq
where we recall for the reader's convenience that $\theta_1= \floor*{\frac{r-1}{n-1}}$ if $n\ge2$.
 We recall that earlier in (\ref{P_small}) and (\ref{P_big}) we had the requirements that $P \leq H_i$ for all $i$ and  $H_iP < q$ for all $i$. The first is clearly true; the second we may verify as long as  we assume $H_{n}H_{1} < q^{1+1/(2\theta_1)}$, as we do in our theorem statement.

With this choice for $P$, we have
\[  |S(F; \Nbf,\Hbf)|^{2r} \ll_{n, r, \Del,D} \norm{\Hbf}^{2r-1} H_1^{-1} q^{n/2+1/(2\theta_1)}(\log q)^{2r(n+1)} , \]
and hence we conclude that
 \[ |S(F; \Nbf,\Hbf)| \ll_{n, r, \Del,D}  \norm{\Hbf}^{1-1/(2r)}H_1^{-1/(2r)}q^{\frac{n\theta_1+1}{4r\theta_1}}(\log q)^{n+1} .\]
 This proves Theorem \ref{thm_main_mult}, upon recalling that we have reduced to the setting in which $H_{\min} = H_1$, $H_{\max} = H_n$, and we have set $\Theta=\theta_1 = \floor*{\frac{r-1}{n-1}}$ if $n\ge2$.  

\subsection{The role of the stratification}\label{sec_strat_1}
It is useful to remark here on the crucial role that the stratification has played. Suppose that instead of Theorem \ref{thm_Xu_original} we only gained information about $X_1$, without any further stratification into $X_2, \ldots, X_n$. Then for those $(\xbf^{(1)}, \ldots, \xbf^{(2r)}) \in \mathbb{A}^{2nr}(\F_q) \setminus X_1 (\F_q),$ we would have
 \[ |S(\xbf^{(1)}, \ldots, \xbf^{(2r)}) | \leq Cq^{n/2},\]
 but for those $(\xbf^{(1)}, \ldots, \xbf^{(2r)}) \in X_1 (\F_q)$ we  could have an upper bound as bad as the trivial $(\# \F_q)^n$.
We would then have only the instance $j=1$ of Theorem \ref{thm_Xu}, namely
\[ \# \left\{ \{\xbf\}
\in (\zerobf,\kbf]^{2r} : \left|  \sum_{\mbf \modd{q}}
\chi(F_{\{\xbf\}}(\mbf))
\right|  > C q^{n/2} \right\}
 \leq C''  \| \kbf \|^{2r} k_1^{-\theta_1}.\]
 In place of (\ref{Y_original}) and Lemma \ref{moment_bound} we would now have 
 \begin{align*}
 \sum_{\mbf\modd{q}}\abs{S(F;\mbf,\kbf)}^{2r}
&\leq  \sum_{\bstack{\{\xbf\}\in (\zerobf,\kbf]^{2r}}{\{\xbf \} \in X_0 \setminus X_1}} \abs*{\sum_{\mbf\modd{q}}\chi(F_{\{\xbf\}}(\mbf))} 
+ \sum_{\bstack{\{\xbf\}\in (\zerobf,\kbf]^{2r}}{\{\xbf \} \in X_1}}\abs*{\sum_{\mbf\modd{q}}\chi(F_{\{\xbf\}}(\mbf))}  \\
& \ll \|\kbf\|^{2r} C q^{n/2} + C'' \| \kbf \|^{2r}q^n k_1^{-\theta_1}. 
\end{align*}
The second term has the worst growth $q^n$ appearing in (\ref{MR_weak1}) combined with the least savings $k_1^{-\theta_1}=B_{n,r,1}(\kbf)^{-1}$.
Proceeding with Menchov-Rademacher and the remaining argument we would obtain
\[ |S(F;\Nbf,\Hbf)|^{2r} \ll (\log q)^{2r(n+1)}  \norm{\Hbf}^{2r-1}P^{-1}q^{n/2} (1+q^{n/2}\widetilde{B}_{n,r,1}(\Hbf/P)^{-1}).\]
The last factor is $1+q^{n/2}(H_1/P)^{-\theta_1}$, which we balance by choosing $P=H_1 q^{-n/(2\theta_1)}$. This is a smaller choice than (\ref{P_choice_n}), hence provides smaller savings; ultimately this yields the bound
\beq\label{no_strat}
  |S(F;\Nbf,\Hbf)|  \ll  \norm{\Hbf}^{1-1/2r}H_1^{-1/2r}  q^{\frac{n\theta_1+n}{4r \theta_1}}  ( \log q)^{n+1}.
  \eeq
This is worse than our main theorem by a factor of $q^{\frac{n-1}{4r\theta_1}}$.

\subsection{Proof of Corollaries \ref{cor_threshold_same} and  \ref{cor_threshold_diff} }\label{sec_strength_proof}

Below, we prove Corollaries \ref{cor_threshold_same} and  \ref{cor_threshold_diff}  simultaneously; for the case of Corollary \ref{cor_threshold_same}, simply set $c_0 =1$ in each instance below.  We recall from Theorem \ref{thm_main_mult} that
\beq\label{compare1}
\abs{S(F;{\bf N,H})}
	\ll \norm{\Hbf}^{1-1/(2r)}H_{\min}^{-1/(2r)}q^{\frac{n\Theta+1}{4r\Theta}} (\log q)^{n+1} ,
\eeq
 where for every $r \geq1$ we have set 
 \[
 \Theta=\Theta_{n,r}= 
	\floor*{\frac{r-1}{n-1}}.
	\] 
First let us determine for a given $n \geq 2$ the threshold governing for which lengths $\Hbf$ the bound (\ref{compare1}) is nontrivial, that is $o(\|\Hbf\|)$, under the assumption that $\|\Hbf \|^{1/n} = q^{\be}$ and $H_{\min} \gg \| \Hbf\|^{c_0/n} = q^{c_0 \be}.$  In order to satisfy the hypotheses of Theorem \ref{thm_main_mult} for parameters $n,r$, we specify that $\be \leq {1/2 + 1/(4\Theta)}$. Then the bound (\ref{compare1})   is nontrivial as long as 
\[  \parens*{1 - \frac{1}{2r}} n\be - \frac{c_0}{2r} \be + \frac{n \Theta + 1}{4r\Theta} < n\be,\]
that is,
\beq\label{be_req}
 \be > \be_{n,r} := \frac{n \Theta + 1}{2\Theta(n+c_0)} =\frac{1}{2} - \frac{c_0\Theta-1}{2\Theta(n+c_0)}.
 \eeq
Given $n \geq 2$ and $0<c_0 \leq 1$, as long as we take $r$ sufficiently large that $\Theta=\Theta_{n,r}>1/c_0$, we have $\be_{n,r} < 1/2$.   (In particular if $c_0=1$, note that $\Theta_{n,r}>1$ when $(r-1)/(n-1) \geq 2$, or equivalently, $r \geq 2n-1$.) On the other hand, note that for fixed $n \geq 2$, for all $r \geq 1$ 
\[ \frac{1}{2} - \frac{c_0\Theta-1}{2\Theta(n+c_0)} >  \frac{1}{2}  - \frac{c_0}{2(n+c_0)} \geq \frac{1}{2}  - \frac{1}{2(n+1)},\]
for all $0< c_0 \leq 1$, and this a limitation on the range of $\be$ for which the bound is nontrivial.

We now compute that the bound (\ref{compare1}) is of the form $\norm{\Hbf}q^{-\delta}$
where
\beq\label{del_dfn_theta}
 \delta=\frac{n+c_0}{2r}\beta-\frac{n\Theta+1}{4r\Theta} .
 \eeq
We make the approximation that $\Theta = (r-1)/(n-1)$, which is an identity when $n=2$, and will not be far from the truth, when we later take $r$ very large. Then we compute that as a function of $r$, $\del$ can be represented as 
\[ f_{a,b,c}(r) = a/r - (br-c)/(r(r-1))\]
where 
\[ a=(n+c_0)\be/2, \qquad b = n/4, \qquad c=1/4,\]
 and thus $\del$ attains a maximum at
\beq\label{r_choice}
 r = (a-b)^{-1} \{ (a-c) \pm \sqrt{(a-c)^2  - (a-c)(a-b)}\}.
 \eeq
To have $r>0$ we must have $(a-b)>0$, that is
\beq\label{lower_bound_be}
  \be > \be_{n,c_0} := \frac{n}{2(n+c_0)} = \frac{1}{2} - \frac{c_0}{2(n+c_0)},
  \eeq
  agreeing with our previous observation.
Thus from now on we assume $\be = \be_{n,c_0} + \kappa$ for some small $\kappa$, and we will study how $\del = \del_n(\kappa)$ behaves as $\kappa \maps 0$. From (\ref{del_dfn_theta}) we see that 
\[ \del = \del_n(\kappa) \approx  \frac{n+c_0}{2r}\kappa-\frac{1}{4r\Theta} \]
and as $\kappa \maps 0$ we see in the choice of $r$ given by (\ref{r_choice}) that we will take $r$ to be the integer closest to 
\[ r\approx \frac{n-1}{\kappa(n+c_0)} .\]
Plugging this choice of $r$ into $\del_n(\kappa)$ and using the approximation $\Theta \approx r/(n-1)$ (which is valid as $\kappa \maps 0$ since then $r \maps \infty$), we see that 
\beq\label{del_comparison_final}
 \delta_n \approx \frac{(n+c_0)^2}{4(n-1)}\kappa^2 .
 \eeq
Alternatively, we can encapsulate the restriction (\ref{lower_bound_be})   by recording it as the restriction
 \[ \|\Hbf \| H_{\min} \gg \| \Hbf \|^{1 + c_0/n}  \gg (q^{n(1+c_0/n)})^{\frac{1}{2} - \frac{c_0}{2(n+c_0)}} = q^{\frac{n}{2} + \frac{c_0(1-c_0)}{2(n+c_0)}}.
 \]
 Thus we will obtain a nontrivial bound as long as $\| \Hbf \| H_{\min} \gg q^{n/2 + \kappa}$ for some small $\kappa$.

\subsection{Proof of technical lemmas}\label{sec_tech}

\begin{proof}[Proof of Lemma \ref{lemma_A_D}]

 This argument originates in \cite[\S 4]{HB13} and is similar but not identical to lemmas in \cite{HBP15} and \cite{Pie16}; for completeness we provide an argument.
 The first property in Lemma \ref{lemma_A_D} is a direct result of the definition of $\Acal(\mbf)$.
Since each $\Acal(\mbf)$ is a non-negative integer, 
 \[ \sum_{\mbf} \Acal(\mbf) \leq \sum_{\mbf} \Acal(\mbf)^2,\]
 and it suffices to prove the third property. We write
\begin{eqnarray}
 \sum_{\mbf} \Acal(\mbf)^2 
 	&=& \sum_\mbf \# \{p, p', \abf, \abf' : m_i \leq \frac{N_i-a_iq}{p}  < m_i +H_i/P, m_i \leq \frac{N_i-a_i'q}{p'}  < m_i +H_i/P \}  \nonumber \\
	& \ll & \left( \prod_{i=1}^n \frac{H_i}{P} \right) \# \{p, p', \abf, \abf' : 0 \leq | \frac{N_i-a_iq}{p}  - \frac{N_i-a_i'q}{p'} | \leq H_i/P \} \nonumber \\
	& \ll &  \| \Hbf \| P^{-n}  \sum_{p, p' \in \Pcal} \Mcal(p,p'), \label{HPM}
	\end{eqnarray}
 where 
 \[ \Mcal(p,p') =  \# \{\abf, \abf', 0 \leq a_i < p, 0  \leq a_i' < p' : 0 \leq | \frac{N_i-a_iq}{p}  - \frac{N_i-a_i'q}{p'} | \leq H_i/P \}.\]
 
 First consider $p=p'$. Then 
 \begin{eqnarray*}
  \Mcal(p,p) & \leq&\# \{\abf, \abf' : |(N_i - a_iq) - (N_i - a_i'q)| \leq p (H_i/P) \leq 2H_i, i=1,\ldots,n\} \\
  	& \leq &  \# \{\abf, \abf' : |a_i - a_i'| \leq 2H_i/q < 2, i=1,\ldots,n\}.
	\end{eqnarray*}
	Here we have used $H_i < q$.
This shows that once $\abf$ is chosen, there are at most $3^n$ choices for $\abf'$, so that $\Mcal(p,p) \ll P^n$ and hence $\sum_{p=p' \in \Pcal}\Mcal(p,p') \ll P^{n+1}$, which suffices for our desired bound for (\ref{HPM}).

Next, consider the case $p \neq p'$. For each $i=1,\ldots, n$ we choose (by Bertrand's postulate) a prime $l_i$ such that 
\[ \frac{q}{H_i} < l_i \leq \frac{2q}{H_i}.\]
(Here we use the assumption that $H_i < q$ for each $i$.)
For each $i$, let $M_i = \left[ \frac{N_i l_i}{q} \right]$ or $M_i = \left[ \frac{N_i l_i}{q} \right]+1$, so that $l_i \ndiv M_i$. Then $|N_il_i/q - M_i | \leq 1$ implies that $|N_i - qM_i/l_i| \leq q/l_i$, so that 
\begin{eqnarray*}
 \Mcal(p,p') &\ll& \# \{ \abf, \abf' : \left| \frac{qM_i/l_i - a_iq}{p} - \frac{qM_i/l_i  - a_i'q}{p'} \right| \leq \frac{H_i}{P} + \frac{q}{l_ip} + \frac{q}{l_ip'}, i=1,\ldots,n \} \\
 	& \ll & \# \{ \abf, \abf' : | (p'- p)M_i - (a_ip' - a_i'p)l_i| \leq 12 P, i=1,\ldots, n\}.
 \end{eqnarray*}
 Given $p,p'$ and an integer $\del$, for each fixed index $i$ there is at most one way to choose $a_i, a_i'$ with $0 \leq a_i < p$, $0 \leq a_i' < p'$ such that $a_ip' -a_i'p = \del$. 
 Thus 
\[  \sum_{p \neq p' \in \Pcal} \Mcal(p,p') \ll \#\{ p \neq p' \in \Pcal, \ubf=(u_1,\ldots, u_n), |u_i| \leq 12P: 
   M_i(p'-p) \con u_i \modd{l_i}, i=1,\ldots,n \}.
\]
 Now we use the fact that $l_i \ndiv M_i$. Thus for a fixed $i$, the condition $M_i(p'-p) \con u_i \modd{l_i}$ determines $p'-p$ uniquely modulo $l_i$, and hence uniquely in $\Z$, as long as $P < l_i$, which is guaranteed by the assumption $P \leq q/H_i$, that is $PH_i < q$. In particular, there is at most one value for the difference $p'-p$ that will satisfy all $n$ conditions. So we may choose $p$ freely and then $p'$ is determined. As a result, after counting up the possible choices for $\ubf$, we conclude that 
   \[ \sum_{p \neq p' \in \Pcal} \Mcal(p,p') \ll P^{n+1}.\]
Applying this in (\ref{HPM}), we conclude that
\[  \sum_{\mbf} \Acal(\mbf)^2 \ll \| \Hbf \| P.\]
\end{proof}

 \begin{proof}[Proof of Lemma \ref{lemma_rearrange}]
We may restrict our attention to permutations that map $\{1,\ldots,M\}$ to itself, or equivalently, we may suppose going forward that $M=n$. For indeed, any indices $i$ that occur in the sums such that $i \leq M$ but $\sig(i) >M$, clearly contribute no more to the left-hand side than to the right-hand side, since $t_i \leq t_{\sig(i)}$. 

 Now let $\sig$ be the permutation that minimizes 
\beq\label{rearrangement}
a_1t_{\sig(1)} + \cdots + a_n t_{\sig(n)};
\eeq
 if there is more than one such permutation, we choose $\sig$ to be the one with the greatest number of fixed points. We will show that $\sig$ is the identity. For suppose otherwise, and let $i$ be the smallest index such that $\sig(i) \neq i$. Then $\sig(i)>i$ and hence $t_{\sig(i)} \geq t_i$. Furthermore, denoting by $k$ the index such that $\sig(k)=i$, we must also have that $k>i$ and hence 
$a_k \leq a_i$. We then see that $(t_{\sig(i)} - t_i) ( a_i - a_k) \geq 0$, or equivalently, 
$ t_{\sig(i)} a_i + t_i a_k \geq t_i a_i + t_{\sig(i)} a_k.
$
Define a new permutation $\sig'$ by $\sig'(u)=u$ for $u=1,\ldots, i$, $\sig'(k)=\sig(i)$, and $\sig'(u) = \sig(u)$ for all the remaining $u \in \{i+1,\ldots, n\} \setminus \{k\}$. Then we see that 
$
  t_{\sig(i)} a_i + t_{\sig(k)} a_k \geq t_{\sig'(i)} a_i + t_{\sig'(k)} a_k$
  so that (\ref{rearrangement}) does not increase in value if we replace $\sig$ by $\sig'$, and $\sig'$ must also be a minimizer. Yet $\sig'$ has one more fixed point than $\sig$, a contradiction. We conclude that $\sig$ is the identity.
\end{proof}

\begin{proof}[Proof of Lemma \ref{lemma_bound_sum}]
By the definition of the $\theta_j$, the sum over $j=0,1,\ldots, n-1$ takes the form
\beq\label{sum_j}
1 + \sum_{j=1}^{n-1}q^{j/2} K_1^{-\theta_j}   =  1 + \sum_{j=1}^{n-2}q^{j/2} K_1^{-j\theta_1}  + q^{(n-1)/2}K_1^{-(r-1)}    \leq 1 +  \sum_{j=1}^{n-1}(q^{1/2} K_1^{-\theta_1}  )^j
\eeq
in which $\theta_1 = \lfloor (r-1)/(n-1) \rfloor$.   Here we have used the fact that for $j=n-1$, $\theta_{n-1} = r-1 \geq (n-1) \lfloor (r-1)/(n-1) \rfloor = (n-1)\theta_1$.
Now we see from the right-most expression that under the assumption (\ref{K_relation}), all terms $j \geq 2$ are dominated by $j=0,1$. On the other hand, we see from the middle expression that if (\ref{K_relation}) does not hold, then that expression is $\gg 1$, as claimed. 

It remains to examine the terms with $j=n$, which we divide into the even and odd cases. For $n \geq 2$ even, the $j=n$ term is 
\[q^{n/2}(K_1 \cdots K_{n/2})^{-(2r-1)}  \leq q^{n/2} K_1^{-(2r-1)n/2} \leq (q^{1/2}K_1^{\theta_1})^n, \]
in which we have used the ordering $K_1 \leq K_2 \leq \cdots \leq K_n$ and the fact that $(2r-1)(n/2) \geq n(r-1)/(n-1) \geq n\theta_1$ holds when $n \geq 2$. Thus the above expression is $\ll 1$ under the assumption (\ref{K_relation}). 

For $n \geq 3$ odd, the $j=n$ term is 
	\[q^{n/2}(K_1 \cdots K_{(n-1)/2})^{-(2r-1)}K_{(n+1)/2}^{-r} \leq q^{n/2} K_1^{-\{(2r-1)(n-1)/2 + r\}}\leq (q^{1/2}K_1^{\theta_1})^n,  \]
	upon verifying that $n \geq 2$ suffices to show that  $(2r-1)(n-1)/2 + r \geq n (r-1)/(n-1)  \geq n\theta_1$. Under the assumption (\ref{K_relation}) we see that the $j=n$ term is also $\ll 1$. This concludes the proof of Lemma \ref{lemma_bound_sum}.

\end{proof}

\section{Appendix}\label{sec_compare}
\subsection{Fourier methods for incomplete sums that are not short}\label{sec_Fourier}
Roughly speaking, the threshold $H_i \leq q^{1/2}$ appears as a natural constraint of the ranges for which our main results hold. This is not a deficit, for recall that on the other side of this threshold, different methods, which also rely on Weil bounds, become feasible. To bound $S(F;\Nbf,\Hbf)$ in cases where $H_i \gg q^{1/2}$, an advantageous strategy is to ``complete the sum,'' writing 
\[ S(F;\Nbf,\Hbf) = \sum_{\bstack{\abf = (a_1,\ldots, a_n) }{a_i \modd{q}}} \chi (F(\abf))  \sum_{\bstack{\xbf \in \Z^n}{x_i \in (N_i,N_i+H_i]}} \mathbf{1}_{\xbf \con \abf \modd{q}}  .\]
One then expands the sum over $\xbf$ using 
\[ \mathbf{1}_{\xbf \con \abf \modd{q}} = \frac{1}{q^n}\sum_{\kbf \modd{q}} e_q(\kbf \cdot (\xbf - \abf))\]
so that 
\[ S(F;\Nbf,\Hbf) = \frac{1}{q^n} \sum_{\kbf \modd{q}} \sum_{\abf \modd{q}} \chi (F(\abf))e_q(-\kbf \cdot   \abf) \Xi_1(a_1/q) \cdots \Xi_n(a_n/q),\]
in which $\Xi(\al) = \min \{H_i, \|\al\|^{-1}\},$ where $\|\al\|$ denotes the distance from $\al$ to the nearest integer.
One then aims to show that under appropriate assumptions on the smoothness of $F \in \F_q[x_1,\ldots, x_n]$, for generic $\kbf$ a Weil bound applies so that the internal sum over $\abf$ is $O(q^{n/2})$. (Note that this does require the deep input of a Weil-strength bound for a multi-dimensional mixed character sum; see e.g. \cite{Kat06} for one such reference.) The resulting sum over $\kbf$ is  then expected to be roughly on the order of $O(q^{-n/2}\|\Hbf \| + q^{n/2} (\log q)^n)$, which is $o(\|\Hbf\|)$ in the case that $H_i \gg q^{1/2}$ for each $i=1,\ldots,n$,  that is, the setting that is complementary to that of this paper. In a hybrid case, in which some $H_i$ are smaller than $q^{1/2}$ and some are larger, one could adopt a hybrid strategy; this regime is closely related to \cite{Pie06,PieHB12a}.

\subsection{Conditional results:  assuming a stronger stratification result} \label{sec_improvements}

In our Theorem \ref{thm_main_mult}, the larger $\Theta=\Theta_{n,r}$ is as a function of $r$, the better the bound is asymptotically in $n$.
We briefly explore how one could hope to increase the value of $\Theta_{n, r}$. The key is to improve Theorem \ref{thm_Xu_original}, and hence Theorem \ref{thm_Xu}, by obtaining larger values for the codimensions $\theta_j$.

At present, Theorem \ref{thm_Xu} holds with $\theta_0=0$ and $\theta_n = nr$; for the intermediate values $1 \leq j \leq n-1$, we currently obtain values 
\beq\label{compare_theta_j}
\theta_j  = j \left\lfloor \frac{r-1}{n-1}  \right\rfloor \approx j\left(\frac{r-1}{n-1}\right).
\eeq
 However,
suppose that in the stratification result of Theorem \ref{thm_Xu_original} (and hence in Theorem \ref{thm_F_power} and its corollaries under the modified hypotheses), we were able to take the larger values
\beq\label{theta_conjec}
\theta_j^\sharp=jr, \qquad 1 \leq j \leq n.
\eeq
This is a natural hypothesis since it is the linear interpolation between $\theta_0=0$ and $\theta_n=nr$.  In fact, note from the definition (\ref{compare_theta_j}) that we very nearly achieve (\ref{theta_conjec}) in the case of $n=2$.

Supposing that we can take $\theta_j^\sharp$ as large as in (\ref{theta_conjec}) in Theorem \ref{thm_Xu_original}, we deduce that Theorem \ref{thm_Xu} would hold with the function $B_{n, r, j}(\kbf)$ replaced by the modified function $B^\sharp_{n, r, j}(\kbf)$ defined for $0 \leq j \leq n$ and $\kbf = (k_1,\ldots, k_n)$ with $k_1 \leq k_2 \leq \cdots \leq k_n$ by 
\[ B^\sharp_{n, r,j}(\kbf)  = \begin{cases}
	1 & \text{if $j=0$}\\
	(k_1 \cdots k_{j/2})^{2r} & \text{if $j \geq 1$ is even}\\
	(k_1 \cdots k_{(j-1)/2})^{2r} k_{(j+1)/2}^r& \text{if $j \geq 1$ is odd}.
\end{cases}
\]
Proceeding through the Burgess argument in this paper with the function $B_{n, r, j}(\kbf)$ replaced in each instance by $B^\sharp_{n, r, j}(\kbf)$, we would arrive at the analogue of (\ref{concluding}), which now takes the form
\beq\label{S_conditional}
  |S(F; \Nbf,\Hbf)|^{2r} \ll_{n, r, \Del, D} (\log q)^{2r(n+1)}  \norm{\Hbf}^{2r-1}P^{-1}q^{n/2} \sum_{j=0}^n q^{j/2}\widetilde{B}^\sharp_{n,r,j}(\Hbf/P)^{-1},\eeq
in which for any $\kbf$ with $k_1 \leq k_2 \leq \cdots \leq k_n$ we define
\[ \widetilde{B}^\sharp_{n, r,j}(\kbf)  = \begin{cases}
	1 & \text{if $j=0$}\\
	(k_1 \cdots k_{j/2})^{2r-1} & \text{if $j \geq 1$ is even}\\
	(k_1 \cdots k_{(j-1)/2})^{2r-1} k_{(j+1)/2}^r& \text{if $j \geq 1$ is odd}.
\end{cases}
\]
Recall that we assume $H_1 \leq H_2 \leq \cdots \leq H_n$.
We choose (cf. (\ref{P_choice_1}) and (\ref{P_choice_n})) $P$ to be an integer such that 
\[ \frac{H_1}{2q^{1/(2r-1)}} \leq P \leq \frac{H_1}{q^{1/(2r-1)}} \]
which balances the $j=0$ and $j=2$ contributions. Under this choice, a simple computation shows that for each $j=0, \ldots, n$ we verify that
\[ q^{j/2} \widetilde{B}^\sharp_{n,r,j}(\Hbf/P)^{-1} \leq 1,\]
upon using the fact that $H_1 \leq H_2 \leq \ldots \leq H_n$ and the definition of $ \widetilde{B}^\sharp_{n, r,j}$ above.
Applying this in (\ref{S_conditional}) would give
\[ |S(F; \Nbf,\Hbf)| \ll_{n, r, \Del, D}   \norm{\Hbf}^{1-\frac{1}{2r}}H_1^{-\frac{1}{2r}} q^{\frac{n \Theta^\sharp + 1}{4r\Theta^\sharp}} (\log q)^{n+1}
\]
with 
\[ \Theta^\sharp=(2r-1)/2 = r-1/2.\]

We can compute that this is nontrivial for $\Hbf$  satisfying the analogues of (\ref{cor_range_beta_n}) or (\ref{cor_range_beta_n'}) with $\Theta^\sharp$ in place of $\Theta$; in the limit as $n \maps \infty$ 
we quantify the strength of the bound near the threshold $\beta_n=1/2 - 1 /(2(n+1))$. Letting $\be = \be_n + \kappa$, then our bound is of the form $\norm{\Hbf}q^{-\delta}$, where
\[ \delta=\frac{n+1}{2r}\kappa-\frac{1}{4r\Theta^\sharp} \approx
\frac{n+1}{2r}\kappa-\frac{1}{4r^2}.\]
The maximum
\[\delta\approx \frac{(n+1)^2}{4}\kappa^2\]
 is achieved when
\[ r \approx \frac{1}{\kappa(n+1)}. \]
Thus this conjectural improvement to the stratification would yield a stronger savings near the threshold $\be_n$, but would not alter the fundamental threshold $\be_n$. Similar computations to those above show that even if we could conjecturally improve the $\theta_j$ values to the strongest possible values $\theta_j^\flat=nr$ for $1\le j\le n$, this bound will not be improved substantially, and the threshold $\be_n$ will not change.

\section*{Acknowledgements}
The authors thank D. R. Heath-Brown and M. Larsen for helpful advice on this project.
Pierce is partially supported by NSF CAREER grant DMS-1652173,  a Sloan Research Fellowship and the AMS Joan and Joseph Birman Fellowship. 
Xu has been partially supported by NSF DMS-1702152.
 
\bibliographystyle{alpha}
\bibliography{NoThBibliography}

\end{document}